\documentclass[10pt]{amsart}
\usepackage[utf8]{inputenc}
\usepackage[english]{babel}
\usepackage{amsthm}
\usepackage{amsmath}
\usepackage{thmtools}
\usepackage{mathrsfs}
\usepackage{amssymb}
\usepackage[a4paper, twoside=false, vmargin={2cm,3cm}, includehead]{geometry}
\usepackage{hyperref}
\usepackage{tikz}
\usepackage{graphicx}
\usepackage{mathtools}
\usepackage[capitalize]{cleveref}
\usepackage{accents}
\usepackage{yhmath}

\hypersetup{
    colorlinks,
    citecolor=black,
    filecolor=black,
    linkcolor=black,
    urlcolor=black
}

\newtheorem{theorem}{Theorem}[section]
\newtheorem{corollary}[theorem]{Corollary}
\newtheorem{lemma}[theorem]{Lemma}
\newtheorem{proposition}[theorem]{Proposition}

\newtheorem{conjecture}[theorem]{Conjecture}

\AtEndEnvironment{subproof}{\renewcommand{\qedsymbol}{$\blacksquare$}\qed}

\theoremstyle{definition}
\newtheorem{definition}[theorem]{Definition}

\renewcommand\qedsymbol{$\blacksquare$}

\newcommand{\C}{\mathbb{C}}
\newcommand{\D}{\mathbb{D}}
\newcommand{\R}{\mathbb{R}}
\newcommand{\Z}{\mathbb{Z}}
\newcommand{\N}{\mathbb{N}}
\newcommand{\E}{\mathop{\mathbb{E}}}
\newcommand{\Es}{\mathop{\mathbb{E}^\times}_{k\in\N}}
\newcommand{\Ehfirst}{\mathop{\mathbb{E}}}
\newcommand{\Eh}{\Ehfirst_{\Delta_N}}
\newcommand{\T}{\mathbb{T}}
\newcommand{\M}{\mathcal{M}}
\renewcommand{\S}{\mathbb{S}}
\renewcommand{\P}{\mathbb{P}}
\renewcommand{\d}{\, \mathtt{d}}
\newcommand{\dbtilde}[1]{\accentset{\approx}{#1}}

\title{Pythagorean triples in level sets of completely multiplicative functions}
\author{Guilherme Azevedo}
\address[Guilherme Azevedo]{Universidade NOVA de Lisboa, Caparica, Portugal} \email{g.azevedo@campus.fct.unl.pt}
\author{Joel Moreira}
\address[Joel Moreira]{University of Warwick, Coventry, UK}		 \email{joel.moreira@warwick.ac.uk}
\date{}

\begin{document}

\begin{abstract}
We show that given completely multiplicative functions $f_1,\dots,f_d$ taking values in the unit circle, there exist Pythagorean triples (i.e., integer solutions to $x^2+y^2=z^2$) with $f_i(x),f_i(y),f_i(z)$ all arbitrarily close to $1$ for all $i$. 
This is a new special case of the conjecture that any finite colouring of $\N$ has a monochromatic Pythagorean triple.
Our proof combines vanishing averages for aperiodic functions with concentration estimates for pretentious functions. 
A similar proof is applied to obtain the analogous statement for more general equations of the form $ax^2+by^2=cz^2$ whenever $a,b,c$ are perfect squares satisfying the Rado's condition.
\end{abstract}

\maketitle
\tableofcontents

\section{Introduction}

\subsection{Statement of main results}

In \cite{schur1916kongruenz} Schur showed that the equation $x+y=z$ is \emph{partition regular}, in the sense that any finite colouring of the natural numbers yields a solution $(x,y,z)$ to the equation with all variables in the same colour.
In \cite{rado1933studien}, Rado extended Schur's result by giving necessary and sufficient conditions for a system of linear equations to be partition regular. In particular, it follows from Rado's theorem that for any coefficients $a,b,c\in\N:=\{1,2,3,\dots\}$, the equation $ax+by=cz$ is partition regular if and only if $a=c, b=c,$ or $a+b=c$.

More recently, attention has shifted to partition regularity of polynomial equations, but a question of Erd\H{o}s and Graham \cite{Graham07} regarding Pythagorean triples -- integer solutions to the equation $x^2+y^2=z^2$ -- remains unsolved. 
We believe the answer to be positive and so phrase this problem here as a conjecture.

\begin{conjecture}\label{conj_pythtriples}
    Any finite colouring of $\N$ has a monochromatic Pythagorean triple.
\end{conjecture}

In \cite{Heule_2016}, a computer-aided search established that any 2-colouring of the natural numbers has a monochromatic Pythagorean triple, but already for three colours the problem remains unsolved. 
The related equation $x_1^2+x_2^2+x_3^2+x_4^2=x_5^2$ was shown to be partition regular in \cite{chow2021rado} using a transference principle approach, but this method seems unable to handle the Pythagorean equation without some significant new ideas.
In a different direction, in \cite{frantzikinakis2024partitionregularitypythagoreanpairs} it was shown that given any finite colouring of $\N$ there exists a Pythagorean triple $(x,y,z)$ with $x$ and $y$ in the same colour; and a (potentially different) Pythagorean triple $(x,y,z)$ with $x$ and $z$ in the same colour.

There is an interesting connection between partition regularity of homogeneous equations, such as $x^2+y^2=z^2$, and unimodular completely multiplicative functions, i.e., functions $f:\N\to\S^1:=\{z\in\C:|z|=1\}$ satisfying $f(nm)=f(n)f(m)$ for every $m,n\in\N$.
We denote the class of all such functions by $\M$.
For example, for those systems of homogeneous \emph{linear} equations that are not partition regular, the natural examples of a colouring without a monochromatic solution can be realized as the composition of a completely multiplicative function $f:\N\to\S^1$ with a partition of $\S^1$ into finitely many arcs.
This leads us to consider the special case of \cref{conj_pythtriples} using such colourings.
In \cite[Theorem 1.5]{frantzikinakis2024partitionregularitypythagoreanpairs}, it was shown that if $f\in\M$ takes only finitely many values, then the finite colouring of $\N$ it induces does contain monochromatic Pythagorean triples.
In \cite[Corollary 1.2]{frantzikinakis2025recurrencepretentioussystemsgeneralized}, it is shown that for given $f_1,\dots,f_d\in\M$ and an arc $I\subset\S^1$ containing $1$, there exists a Pythagorean triple $(x,y,z)$ with $f_i(x),f_i(y),f_i(z)\in I$ for all $i=1,\dots,d$; provided the functions $f_i$ are all \emph{pretentious} (see \cref{def:pretentiousFunction} below).
The main goal of this paper is to remove the condition that the functions are pretentious.

\begin{theorem}\label{thm_mainpythagorean}
    Let $f_1,\dots,f_d\in\M$ and let $I\subset\S^1$ be an open set containing $1$. 
    Then there exists a solution to $x^2+y^2=z^2$ with $f_i(x),f_i(y),f_i(z)\in I$ for all $i=1,\dots,d$.
\end{theorem}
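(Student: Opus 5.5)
We will parametrize Pythagorean triples as $x=k(m^2-n^2)$, $y=k\cdot 2mn$, $z=k(m^2+n^2)$ and choose the scaling $k$ freely. Since each $f_i$ is completely multiplicative, $f_i(kx')=f_i(k)f_i(x')$, and the goal becomes finding $(m,n)$ together with a common $k$ such that all three values $f_i(k)f_i(m^2-n^2)$, $f_i(k)f_i(2mn)$, $f_i(k)f_i(m^2+n^2)$ lie in $I$. We first reduce to the case where $I$ is a small arc around $1$. Then it suffices to find $m,n$ for which the three numbers $f_i(m^2-n^2)$, $f_i(2mn)$, $f_i(m^2+n^2)$ are all close to a \emph{common} value $\zeta_i$ for every $i$. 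Indeed, we can then take $k$ with $f_i(k)$ close to $\overline{\zeta_i}$ for all $i$ simultaneously. In general such a $k$ need not exist, so we refine this step. We multiply by $k$ ranging over a set $K$ of integers for which the vector $(f_1(k),\dots,f_d(k))$ is dense enough in the closed subgroup $G$ of $(\S^1)^d$ generated by the values $(f_i(p))_i$. It then suffices to show that the three vectors $F(m^2-n^2)$, $F(2mn)$, $F(m^2+n^2)$, where $F=(f_1,\dots,f_d)$, are all within $\varepsilon$ of each other. They all lie in $G$, so a suitable $k$ can then be chosen by a pigeonhole/compactness argument.

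To produce such $(m,n)$ we use the standard positivity device. Let $\phi\ge0$ be a trigonometric polynomial on $(\S^1)^d$ supported near the identity with $\int\phi>0$, and consider the average
\[
\E_{m,n\le N} w(m,n)\,\phi\bigl(F(m^2+n^2)\overline{F(2mn)}\bigr)\,\phi\bigl(F(m^2-n^2)\overline{F(2mn)}\bigr)
\]
for a suitable weight $w$. Expanding $\phi$ into characters reduces the problem to correlations of the form $\E\, g_1(m^2+n^2)g_2(m^2-n^2)g_3(m)g_4(n)$, where each $g_j$ is a product $\prod f_i^{a_i}$ and hence lies in $\M$. We then split according to the pretentious/aperiodic decomposition. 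If some character combination produces an aperiodic function, meaning one that does not pretend to be $\chi(n)n^{it}$, then the corresponding average vanishes. This is the "vanishing averages for aperiodic functions" input, which uses the factorization $m^2+n^2=(m+in)(m-in)$ over $\Z[i]$ and $m^2-n^2=(m-n)(m+n)$ together with a Kátai/Daboussi-type orthogonality criterion. Consequently only the pretentious characters survive, and these form a subgroup. Quotienting by the aperiodic part, we are reduced to pretentious $f_i$. For these we use concentration estimates: on a suitable set of $(m,n)$, for instance along multiplicatively structured progressions of the form $m,n\equiv$ fixed residues modulo $Q=\prod_{p\le w}p^{r}$, one has $f(P(m,n))\approx \chi$-part$\cdot P(m,n)^{it}\cdot \exp(\text{small})$. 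With $m,n$ of comparable size the archimedean factors $(m^2\pm n^2)^{it}$ and $(2mn)^{it}$ can be made close to each other as well, for example by taking $m/n$ near the ratio at which $m^2+n^2$, $m^2-n^2$ and $2mn$ have nearly equal "$n^{it}$ phases" after rescaling.

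The main obstacle is the aperiodic step. We need a vanishing result for averages mixing quadratic forms, of the shape $\E_{m,n} g_1(m^2+n^2)g_2(m^2-n^2)g_3(mn)$, whenever at least one $g_j$ is aperiodic in the appropriate (Gaussian) sense. We also need a clean argument that the positive main term coming from the pretentious part is genuinely bounded below, rather than merely nonnegative. For the first point we would first try Frantzikinakis–Host style structure theory, or an inverse-theorem argument for multiplicative functions along binary quadratic forms. For the second we would invoke the concentration estimates from \cite{frantzikinakis2025recurrencepretentioussystemsgeneralized} to obtain a lower bound on a positive-density set of $(m,n)$.
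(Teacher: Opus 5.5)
Your outline follows the paper's architecture: a free dilation $k$, Fourier expansion, killing the aperiodic frequencies, and concentration estimates plus Archimedean alignment for what remains. Your normalization step is also fine: make $F(m^2-n^2),F(2mn),F(m^2+n^2)$ close, then pick $k$ with $F(k)\approx\overline{F(m^2+n^2)}$, using that $\overline{\{F(k):k\in\N\}}$ is a group. The gap is that the two steps you flag as obstacles are essentially the whole proof, and your plan for each would not work as stated.

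\textbf{Aperiodic step.} You ask for vanishing when the function on $m^2+n^2$ is aperiodic ``in a Gaussian sense'', via $\Z[i]$ and K\'atai. That is neither what is available nor what is needed. The missing observation is that your ratio normalization (the $k$-average in the paper) forces the three functions in every correlation to satisfy $g_1g_2g_3\equiv1$. Since pretentious functions form a group, as soon as one of them is aperiodic, so is one of the functions on $m^2-n^2=(m-n)(m+n)$ or on $2mn$, i.e.\ on a product of linear forms. Then \cref{thm:aperiodicvanishing} gives the vanishing (this is \cref{cor:aperiodicJP}): it allows an aperiodic function on linear forms together with an \emph{arbitrary} $g\in\M$ on one irreducible quadratic. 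Frantzikinakis--Host alone cannot absorb the factor $g(m^2+n^2)$. Aperiodicity of the function on $m^2+n^2$ is also useless by itself: $g(p)=i$ for $p\equiv 3\bmod 4$ and $g(p)=1$ otherwise is aperiodic, yet $\E_{m,n\le N}g(m^2+n^2)\to\prod_{p\equiv 3\bmod 4}\frac{1-p^{-2}}{1+p^{-2}}\neq0$.

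\textbf{Pretentious step.} The heuristic $f(P(m,n))\approx\chi\text{-part}\cdot P(m,n)^{it}\exp(\text{small})$ is false. By \cref{cor:irrConcentration} and \cref{cor:redConcentration}, the correction factors are $\exp(G_{P_z}(f;K,N))$ for $m^2+n^2$ and $\exp(F(f;K,N))$ for each linear factor. The condition $\D(f,\chi n^{it})<\infty$ controls only their real parts, and
\[ G_{P_z}(f;K,N)-2F(f;K,N)=-2\sum_{K<p\le N,\ p\equiv 3 \bmod 4}\tfrac1p\big(f(p)\overline{\chi(p)}p^{-it}-1\big) \]
can have imaginary part tending to infinity. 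For example, take $\chi=1$, $t=0$, $f(p)=e^{i/\log\log p}$ for large $p\equiv 3\bmod 4$ and $f(p)=1$ otherwise. So $F(m^2+n^2)$ and $F(m^2-n^2)$ are \emph{not} close for all large $N$. One must choose $N$ along a subsequence on which these phases are simultaneously near $0$ for all $j$ (\cref{lem:sequencesandseries}); this is why only a $\limsup$ statement holds.

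There is a second issue in the same step. The concentration estimates need $\prod_{p\le K}p\mid Q$, and $2mn$ is never a unit modulo an even $Q$. Hence $F(2mn)$ always carries an extra factor with no counterpart in $F(m^2\pm n^2)$. If you fix both residues it is a stray constant such as $F(2Q)$; on the paper's grid $(Qm+1,Qn)$ it is the non-concentrating $f(2\sqrt a\,Qn)$. The paper neutralizes it by averaging $Q$ over $\Phi_K$, so that only frequencies on which $f^s$ is an exact Archimedean character survive, and only then applies \cref{cor:weightforpolynomials}.

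Finally, citing the known pretentious case is not enough unless it gives a positive lower bound in the \emph{same} averaging scheme in which the aperiodic frequencies vanish. That lower bound is what \cref{prop:a=c=1PretentiousCase} and the reduction in \cref{thm:general_a=c} supply.
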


Keeping in mind the special case of Rado's theorem concerning equations of the form $ax+by=cz$, a more general form of \cref{conj_pythtriples} was formulated in \cite[Conjecture 1]{frantzikinakis2025recurrencepretentioussystemsgeneralized}.

\begin{conjecture}
    Given coefficients $a,b,c\in\N$, the equation
    \begin{equation}\label{eq:genPyth}
        ax^2+by^2=cz^2
    \end{equation}
    is partition regular if and only if $c\in\{a,b,a+b\}$.
\end{conjecture}

It follows directly from Rado's theorem that the condition $c\in\{a,b,a+b\}$ is necessary, but whether it is sufficient is still open.
In fact, it is unknown whether there exist \emph{any} $a,b,c\in\N$ for which \eqref{eq:genPyth} is partition regular.
Our proof of \cref{thm_mainpythagorean} can be adapted to cover more general equations.

\begin{theorem}\label{thm_maingenpyth}
    Let $a,b,c\in\N$ be perfect squares and suppose that either $a=c$ or $a+b=c$. 
    Let $f_1,\dots,f_d\in\M$ and let $I\subset\S^1$ be an open set containing $1$. 
    Then there exists a solution to $ax^2+by^2=cz^2$ with $f_i(x),f_i(y),f_i(z)\in I$ for all $i=1,\dots,d$.
\end{theorem}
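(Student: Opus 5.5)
The plan is to reduce \cref{thm_maingenpyth} to the same argument used for \cref{thm_mainpythagorean}, exploiting that $a=\alpha^2$, $b=\beta^2$, $c=\gamma^2$ are squares. Then $ax^2+by^2=cz^2$ becomes $(\alpha x)^2+(\beta y)^2=(\gamma z)^2$, and its solutions come from the parametrisation of Pythagorean-type triples. Concretely, I would look for solutions of the form
\[
x=\ell\,\beta\gamma\, P(m,n),\qquad y=\ell\,\alpha\gamma\, Q(m,n),\qquad z=\ell\,\alpha\beta\, R(m,n),
\]
where $P,Q,R$ are binary quadratic forms with $\alpha^2\beta^2\gamma^2(\ldots)$ reducing the equation to a polynomial identity $\alpha^2\beta^2 P^2+\ldots$. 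For $a+b=c$, i.e.\ $\alpha^2+\beta^2=\gamma^2$, one can use the rational parametrisation of the conic $\alpha^2X^2+\beta^2Y^2=\gamma^2Z^2$ through the rational point $(1,1,1)$. This gives quadratic forms $P,Q,R$ in $(m,n)$, each factoring over $\Z[i]$ or $\Z$ into linear forms. For $a=c$ we instead use the point $(1,0,1)$. By complete multiplicativity, $f_i(x)=f_i(\ell\beta\gamma)f_i(P(m,n))$. The constants $f_i(\beta\gamma)$ etc.\ can be absorbed: choose $\ell$ in a set where $f_i(\ell\beta\gamma)$, $f_i(\ell\alpha\gamma)$ and $f_i(\ell\alpha\beta)$ are close to prescribed values. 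Alternatively, we can replace $f_i$ by its twisted version and run the main argument with shifted target arcs.

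The core is then the same as for the Pythagorean case. We must show that
\[
\E_{m,n\le N}\, F_1\bigl(f(P(m,n))\bigr)\,F_2\bigl(f(Q(m,n))\bigr)\,F_3\bigl(f(R(m,n))\bigr)
\]
is bounded below for suitable nonnegative continuous bump functions $F_j$ on the torus $(\S^1)^d$ supported near prescribed points. Here $f=(f_1,\dots,f_d)$. Expanding $F_j$ in Fourier series reduces this to averages of products $g_1(P)g_2(Q)g_3(R)$ with $g_j\in\M$ monomials in the $f_i$.

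Next, split each $g_j$ into a pretentious and an aperiodic part. When some $g_j$ is aperiodic, the relevant correlation average over the quadratic forms vanishes. This is the vanishing-averages input for aperiodic functions along forms factoring into linear factors, as in \cite{frantzikinakis2024partitionregularitypythagoreanpairs}. Only characters with all $g_j$ pretentious survive. For these we use the concentration estimates: on suitable sets $m,n\equiv$ fixed residues and $m,n$ in multiplicatively structured sets, $g_j(L(m,n))$ is close to $\chi(L(m,n))\,L(m,n)^{it}$ times a constant. Choosing $m,n$ in a progression $\equiv$ $(1,0)$-like residues modulo a large $Q$, and restricting to a narrow sector so that $L^{it}$ is nearly constant, makes the surviving main term a positive average of $F_1F_2F_3$ at nearly fixed arguments. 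The last ingredient needed is that the resulting fixed values sit inside the support of the $F_j$. This is where the specific rational point matters: at the base point $(m,n)$, the forms all equal the same value up to the constants $\beta\gamma,\alpha\gamma,\alpha\beta$. Those constants were absorbed via $\ell$, using that the $\ell$-dilation gives $f_i(\ell)$ common to all three coordinates.

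The main obstacle is this pretentious step for the new conic. The concentration estimates must be applied to linear factors of $P,Q,R$ with coefficients depending on $\alpha,\beta,\gamma$, and one needs the "Archimedean" and Dirichlet-character contributions $\chi(L)L^{it}$ to align simultaneously for all three forms. For the Pythagorean case this uses $m^2-n^2$, $2mn$, $m^2+n^2$ near $m\gg n$ or a suitable sector. For general squares I would try choosing the sector around the rational base point, with $n$ small relative to $m$, so all linear factors are approximately proportional to $m$. Then $\chi(L)$ is constant on the chosen residue classes, and the ratios $L^{it}$ are nearly constant. After this, the proof proceeds verbatim as in \cref{thm_mainpythagorean}.
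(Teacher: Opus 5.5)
Your outline has the paper's broad shape: Fourier expansion, vanishing for aperiodic frequencies, concentration on the grid $(Qm+1,Qn)$, and a sector for the Archimedean parts. But the reduction it rests on is wrong.

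With $x=\ell\beta\gamma P$, $y=\ell\alpha\gamma Q$, $z=\ell\alpha\beta R$, the equation becomes $P^2+Q^2=R^2$ for \emph{any} squares $a,b,c$. So the hypothesis $a=c$ or $a+b=c$ is never used. The ansatz is also incompatible with the parametrisation of $\alpha^2X^2+\beta^2Y^2=\gamma^2Z^2$ through $(1,1,1)$ that you invoke next.

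The constants cannot be absorbed by $\ell$. Precisely because $f_i(\ell)$ is common to all three coordinates, the ratios $f_i(x)\overline{f_i(z)}=f_i(\gamma)\overline{f_i(\alpha)}\,f_i(P)\overline{f_i(R)}$ and $f_i(y)\overline{f_i(z)}=f_i(\gamma)\overline{f_i(\beta)}\,f_i(Q)\overline{f_i(R)}$ do not depend on $\ell$. ``Shifted target arcs'' therefore means finding Pythagorean triples whose $f$-values have prescribed non-trivial ratios. The positivity mechanism does not give this: in the paper positivity comes from evaluating at the identity ($\psi(1)>0$, $(1,\dots,1)\in Z^\perp$). It is also false in general. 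Your reduction would apply equally to $x^2+y^2=4z^2$, yet for $f(n)=e^{2\pi i v_2(n)/3}$ every solution has $v_2(x)=v_2(z)+1$ or $v_2(y)=v_2(z)+1$. Hence $f(x),f(y),f(z)$ are never all close to $1$. The constants have to be built into the forms: take $(x,y,z)=(kP_x,kP_y,kP_z)$ with $P_x,P_y,P_z$ from \eqref{eq:a=c=1_param} or \eqref{eq:a+b=c_param}, as the paper does.

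Even with these forms, three steps of your sketch fail as stated.

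(i) Concentration does not give the same constant for every form. On the grid, the irreducible $P_z$ yields $\exp(G_{P_z}(f;K,N))$, with the weights $\omega_{P_z}(p)$ of \eqref{eq:G(f;K,N)def}. A product of two linear forms yields $\exp(2F(f;K,N))$ instead. The base point can align the values of $\chi$ and the Archimedean parts, but not $\exp(G_{P_z}-2F)$, which need not converge. For example, let $f(p)=e^{i/\log\log p}$ when $p\equiv 3\bmod 4$ and $f(p)=1$ otherwise. Then $f\sim 1$ and $G_{P_z}(f;K,N)=0$ for large $K$, but the imaginary part of $2F(f;K,N)$ grows like $\log\log\log N$. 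So $f(P_x^Q)\overline{f(P_z^Q)}$ concentrates near a point that keeps circling $\S^1$, and the average on your progressions is not bounded below for all large $N$. The paper only obtains a $\limsup$, choosing good $N$ via \cref{lem:sequencesandseries}.

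(ii) Your thin sector near $n=0$ does handle the Archimedean parts when $a+b=c$. When $a=c$, however, the base point is a zero of $P_y=2\sqrt a\,mn$, and $P_y^Q=2\sqrt a\,Qn\,(Qm+1)$ carries the factor $f(2\sqrt a\,Qn)$. This factor is not concentrated, since the class of $Qn$ is not coprime to $Q$, and $(n/m)^{it_j}$ is far from constant near $n=0$. The paper handles this coordinate by also averaging over $Q\in\Phi_K$. By \cref{lem:folnervanish} in the variable $Q$, only frequencies whose monomial on the $y$-coordinate is \emph{exactly} an Archimedean character survive (the groups $Z$ in \cref{prop:a=c=1PretentiousCase} and \cref{thm:general_a=c}). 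For those, the factor is Archimedean and is absorbed on the set $W$ of \cref{cor:weightforpolynomials}.

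(iii) \cref{thm:aperiodicvanishing} needs the aperiodic function on a linear factor, and factoring $P_z$ over $\Z[i]$ does not help since $f$ is only defined on $\N$. If only the function on $P_z$ is aperiodic, the $(m,n)$-average need not vanish. For instance, $g(p)=p^{i}$ for $p\equiv 3\bmod 4$ and $g(p)=1$ otherwise is aperiodic, yet $g(m^2+n^2)=1$ whenever $\gcd(m,n)=1$. The paper kills this case only through the multiplicative average over the dilation $k$ (\cref{cor:aperiodicJP}), whereas you fix $\ell$ rather than average over it.
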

Our proof combines Gowers uniformity of aperiodic functions $f\in\M$, obtained in \cite{frantzikinakis2016higherorderfourieranalysis}
, with concentration estimates for pretentious multiplicative functions obtained in \cite{Klurman_2021} and \cite{frantzikinakis2024partitionregularitygeneralizedpythagorean}.
The reliance on these tools forces us to consider only equations whose parametric solutions (see \cref{sec_parametrizations} below) only use one irreducible polynomial.
This is the case only when all coefficients $a,b,c$ are perfect squares.

\subsection{Ergodic interpretation}
In this section we explore an equivalent form of the \cref{conj_pythtriples} involving measure preserving actions of the semigroup $(\N,\times)$ of natural numbers under multiplication. 
This interpretation will not be used again in the proof (or in fact anywhere in the paper outside this section).

Given a probability space $(X,\mu)$ we say that a map $T:X\to X$ \emph{preserves the measure} if $\mu(T^{-1}A)=\mu(A)$ for every measurable set $A\subset X$.
If $T=(T_n)_{n\in\N}$ is an action of $(\N,\times)$ on $X$ and each $T_n$ preserves the measure we say that $(X,\mu,T)$ is a \emph{multiplicative measure preserving system}, or just \emph{multiplicative system}.

Furstenberg's correspondence principle, when applied to \cref{conj_pythtriples} yields the following equivalent form.
\begin{conjecture}[{\cite[Conjecture 2]{frantzikinakis2025recurrencepretentioussystemsgeneralized}}]\label{conj_mainergodic}
    Let $(X,\mu,T)$ be a multiplicative system and let $A\subset X$.
    Suppose that $\mu(A\cup T_1^{-1}A\cup\cdots T_r^{-1}A)=1$ for some $r\in\N$.
    Then there exists a solution $(x,y,z)$ to $x^2+y^2=z^2$ such that
    $$\mu(T_x^{-1}A\cap T_y^{-1}A\cap T_z^{-1}A)>0.$$
\end{conjecture}
We stress that the condition $\mu(A\cup T_1^{-1}A\cup\cdots T_r^{-1}A)=1$ cannot be replaced by the weaker condition $\mu(A)>0$ (cf. second remark after Theorem 1.1 in \cite{frantzikinakis2024partitionregularitypythagoreanpairs}).
\cref{conj_mainergodic} is a double recurrence statement in ergodic theory, faintly resembling the ergodic Roth theorem.
The first step in Furstenberg's proof of Roth's theorem is to establish the special case of Kronecker systems.
We recall here the definition for multiplicative systems.

\begin{definition}\label{def_kronecker}
    Let $(X,\cdot)$ be a compact abelian group with Haar measure $\mu$.
    Let $\alpha:\N\to X$ be a multiplicative homomorphism, i.e., satisfy $\alpha(nm)=\alpha(n)\cdot\alpha(m)$.
    Then the maps $T_n:X\to X$ given by $T_n(x)=x\alpha(n)$ form a multiplicative measure preserving action $T=(T_n)_{n\in\N}$ and $(X,\mu,T)$ is a multiplicative system.
    A multiplicative system is called a \emph{Kronecker system} if it is (isomorphic to a system) of this form.
\end{definition}

An equivalent form of \cref{thm_mainpythagorean} is the special case of \cref{conj_mainergodic} for Kronecker systems:

\begin{theorem}\label{thm_Kroneckerfactor}
    Let $(X,\mu,T)$ be a Kronecker multiplicative system and let $A\subset X$ with $\mu(A)>0$.
    Then there exists a solution $(x,y,z)$ to $x^2+y^2=z^2$ such that
    $$\mu(A\cap T_x^{-1}A\cap T_y^{-1}A\cap T_z^{-1}A)>0.$$
\end{theorem}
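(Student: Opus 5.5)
The plan is to deduce \cref{thm_Kroneckerfactor} from \cref{thm_mainpythagorean} by approximating $1_A$ with a trigonometric polynomial on the compact abelian group $X$. The characters $\chi$ of $X$ composed with $\alpha$ give functions $\chi\circ\alpha\in\M$, since $\chi(\alpha(nm))=\chi(\alpha(n))\chi(\alpha(m))$ and $|\chi|=1$. The desired quantity is
$$\mu(A\cap T_x^{-1}A\cap T_y^{-1}A\cap T_z^{-1}A)=\int 1_A(t)1_A(t\alpha(x))1_A(t\alpha(y))1_A(t\alpha(z))\d\mu(t).$$
If $\alpha(x),\alpha(y),\alpha(z)$ all lie in a small neighbourhood $U$ of the identity, this integral is close to $\mu(A)$ and hence positive.

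\textbf{Key estimate.} Write $\|g\|=\|g\|_{L^2(\mu)}$. Translation $g\mapsto g(\cdot\, u)$ is continuous in $L^2(\mu)$, so for every $\varepsilon>0$ there is a neighbourhood $U$ of the identity such that $\|1_A(\cdot\,u)-1_A\|<\varepsilon$ for all $u\in U$. By telescoping, replacing each of the last three factors by $1_A$ one at a time changes the integral by at most $\varepsilon$ each (Cauchy--Schwarz, with the other factors bounded by $1$). So for $\alpha(x),\alpha(y),\alpha(z)\in U$ the integral is at least $\mu(A)-3\varepsilon>0$ once $\varepsilon<\mu(A)/3$.

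\textbf{Reducing to finitely many characters.} We cannot take $U$ to be an arbitrary open set, since \cref{thm_mainpythagorean} only controls finitely many multiplicative functions. The step needing care is to show that $U$ can be chosen of the form
$$U=\{u\in X:\chi_i(u)\in I \text{ for } i=1,\dots,d\}$$
for finitely many characters $\chi_i$ and a small arc $I\ni1$. I would argue this as follows.

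First, pick a trigonometric polynomial $P=\sum_{i=1}^d c_i\chi_i$ with $\|1_A-P\|<\varepsilon/3$. This is possible because characters span a dense subspace of $L^2(\mu)$ by Peter--Weyl/Pontryagin duality.

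Next, for every $u$,
$$\|P(\cdot\,u)-P\|\le\sum_i|c_i|\,|\chi_i(u)-1|.$$
This is at most $\varepsilon/3$ once every $\chi_i(u)$ lies in a sufficiently small arc $I$ around $1$.

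Finally, by the triangle inequality and invariance of $\mu$,
$$\|1_A(\cdot\,u)-1_A\|\le \|1_A(\cdot\,u)-P(\cdot\,u)\|+\|P(\cdot\,u)-P\|+\|P-1_A\|<\varepsilon$$
for all $u\in U$.

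\textbf{Applying the main theorem.} Set $f_i=\chi_i\circ\alpha\in\M$ and apply \cref{thm_mainpythagorean} with this $I$. This gives a Pythagorean triple with $\chi_i(\alpha(x)),\chi_i(\alpha(y)),\chi_i(\alpha(z))\in I$ for all $i$. Hence $\alpha(x),\alpha(y),\alpha(z)\in U$, and the key estimate yields positivity of the intersection.

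If the converse equivalence is also wanted, take $X=(\S^1)^d$ with $\alpha=(f_1,\dots,f_d)$ and $A=J^d$ for a small arc $J$ with $J\cdot J^{-1}\subset I$. A positive-measure intersection then gives some $t\in A$ with $t\alpha(x)\in A$, so $f_i(x)\in J^{-1}J\subset I$, and likewise for $y$ and $z$.
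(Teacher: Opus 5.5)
Your proposal is correct and follows essentially the same route as the paper. The paper also approximates $1_A$ in $L^2$ by a finite combination $\sum c_i\chi_i$ of characters, applies Theorem~\ref{thm_mainpythagorean} to $f_i=\chi_i\circ\alpha$ with an arc of radius about $\epsilon/\sum|c_i|$, and concludes from the smallness of $\|1_A\circ T_x-1_A\|_{L^2}$ (and likewise for $y,z$) via a telescoping bound. Your sketch of the converse implication is an extra that the paper only asserts.
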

Note that in a Kronecker system the condition $\mu(A)>0$
is equivalent to the condition $\mu(A\cup T_1^{-1}A\cup\cdots T_r^{-1}A)=1$ for some $r\in\N$.
A related result obtained in \cite{frantzikinakis2025recurrencepretentioussystemsgeneralized} derives the same conclusion under the assumption that the spectral measure of the system is supported on the set of pretentious functions. 
By contrast, \cref{thm_Kroneckerfactor} allows for aperiodic functions to be part of the spectrum, but it requires the spectral measure to be countably supported.

\begin{proof}
Let $\epsilon>0$ be sufficiently small in terms of $\mu(A)$.
    Since $X$ is a compact abelian group, the characters form an orthonormal basis for $L^2$ and hence one can find characters $\chi_1,\dots,\chi_d\in\hat X$ and coefficients $c_1,\dots,c_d\in\C$ such that 
    $$\left\|1_A-\sum_{i=1}^dc_i\chi_i\right\|_{L^2}<\epsilon.$$
    Let $\alpha:\N\to X$ be the homomorphism from \cref{def_kronecker} and, for each $i=1,\dots,d$, let $f_i=\chi_i\circ\alpha$. 
    Note that $f_i\in\M$ and $\chi_i\circ T_n=f_i(n)\chi_i$.

    Let $I\subset\S^1$ be the arc around $1$ with radius $\epsilon/\sum|c_i|$.
    Using \cref{thm_mainpythagorean}, find a Pythagorean triple $(x,y,z)$ such that $f_i(x),f_i(y),f_i(z)\in I$ for all $i=1,\dots,d$. 
    It follows that
    $$\|1_A\circ T_x-1_A\|_{L^2}\leq 2\epsilon+\sum_{i=1}^d|c_i|\cdot\|\chi_i\circ T_x-\chi_i\|_{L^2}<3\epsilon.$$
    Similarly for $y$ and $z$, which leads to the conclusion
    \begin{equation*}
        \mu(A\cap T_x^{-1}A\cap T_y^{-1}A\cap T_z^{-1}A) = \int_X1_A\cdot 1_A\circ T_x\cdot 1_A\circ T_y\cdot 1_A\circ T_z\d\mu
        \geq \int_X1_A\d\mu-9\epsilon>0
    \end{equation*}
    whenever $\epsilon<\mu(A)/9$.
    \end{proof}
To parallel the ergodic proof of Roth's theorem, one might attempt to ``lift'' \cref{thm_Kroneckerfactor} from the Kronecker factor of a given system to the entire system.
However, even for systems where the Kronecker factor is trivial (equivalently, when the system is weak-mixing) \cref{conj_mainergodic} is still open.

\subsection{Parametrizations}
\label{sec_parametrizations}

To establish our main theorems we consider parametrized solutions to \eqref{eq:genPyth} which are given by polynomials in two variables.
For the rest of the paper we let $a,b,c\in\N$ be perfect squares.

When $a=c$ (and we will omit the case $b=c$ because it is symmetric), solutions to \eqref{eq:genPyth} are given, for each choice of $m,n\in\N$, by $(P_x,P_y,P_z)$, where
\begin{equation}\label{eq:a=c=1_param}
    P_x=P_x(m,n)=m^2-bn^2,\quad P_y=P_y(m,n)=2\sqrt{a}mn,\quad P_z=P_z(m,n)=m^2+bn^2.
\end{equation}
Note that $P_z$ is irreducible, whereas $P_x$ and $P_y$ are both reducible (if $b=\beta^2$ then $P_x=(m+\beta n)(m-\beta n)$).
We have that $P_x,P_y,P_z$ are all positive whenever $m>n\sqrt{b}$.

When $a+b=c$, solutions to \eqref{eq:genPyth} are given, for each choice of $m,n\in\N$, by $(P_x,P_y,P_z)$, where
\begin{equation}\label{eq:a+b=c_param}
    \begin{gathered}
    P_x=P_x(m,n)=m^2-2bmn-abn^2,\qquad\qquad\qquad P_y=P_y(m,n)=m^2+2amn-abn^2,\\
    P_z=P_z(m,n)=m^2+abn^2.
    \end{gathered}
\end{equation}
Again note that $P_z$ is irreducible, but both $P_x$ and $P_y$ are reducible, indeed if $\alpha=\sqrt{bc}$ and $\gamma=\sqrt{ac}$ then
$$P_x= (m-bn+\alpha n)(m-bn-\alpha n)\quad \text{ and }\quad P_y= (m+an+\gamma n)(m+an-\gamma n).$$
We have that $P_x,P_y,P_z$ are all positive whenever $m>(a+2b)n$.

To find a solution to \eqref{eq:genPyth}, we will consider averages over $(m,n)\in\N^2$.
To ensure that $x,y,z$ are positive, throughout the paper we restrict our averages to the region $m>(a+2b)n$.

\subsection{Averaging schemes}
We use the standard notation
$$\E_{n\in A}f(n):= \frac{1}{|A|}\sum_{n\in A}f(n),$$
whenever $A$ is a finite set and $f:A\to\C$.
In order to restrict attention to the region $m>(a+2b)n$ so that all of $P_x,P_y,P_z$ are positive, for each $N\in \N$ we define 
$$\Delta_N:=\big\{(m,n)\in[N]^2:\,m>(a+2b)n\big\}$$ 
and we will use the notation
$$\Eh f(m,n):=\E_{(m,n)\in\Delta_N}f(m,n)$$
for sufficiently large $N\in\N$ and $f:\N^2\to\C$.

A \emph{multiplicative F{\o}lner sequence} in $\N$ is a sequence $\Phi=(\Phi_K)_{K=1}^\infty$ of finite subsets of $\N$ asymptotically invariant under dilation in the sense that
$$\forall x\in\N,\quad \lim_{K\to\infty}\frac{|\Phi_K\cap (x\cdot\Phi_K)|}{|\Phi_K|}=1.$$
The \emph{upper density} of a set $A\subset\N$ with respect to a multiplicative F{\o}lner sequence $\Phi$ is the quantity
    $$\bar{d}_\Phi(A):= \limsup_{K\to\infty}\frac{|\Phi_K\cap A|}{|\Phi_K|},$$
    and we say that $A$ has \emph{positive multiplicative density} (or that it is \emph{multiplicatively large}) if $\bar{d}_\Phi(A)>0$ for some multiplicative F{\o}lner sequence $\Phi$.

Throughout the paper we make use of the following multiplicative F{\o}lner sequence $\Phi=(\Phi_K)_{K\in\N}$. 
\begin{definition}\label{def:Phi_K}
    For each $K\in\N$, let
    \begin{equation*}
        \Phi_K := \Bigg\{\prod_{p\leq K}p^{a_p}: K<a_p\leq 2K\Bigg\}.
    \end{equation*}
\end{definition}
It is easy to check that $\Phi$ is indeed a multiplicative F{\o}lner sequence.
It has the useful additional property that for any fixed $Q\in\N$, if $K>Q$ then any element of $\Phi_K$ is a multiple of $Q$; this becomes useful in \cref{sec_concentration} to describe the concentration estimate results we need.


In the rest of the paper we use the notation 
$$\Es f(k):=\lim_{K\to\infty}\E_{k\in\Psi_K}f(k),$$ 
where $(\Psi_K)_{K\in\N}$ is a subsequence of the sequence $(\Phi_K)_{K\in\N}$ described in \cref{def:Phi_K} for which all the limits involved exist.\footnote{One may describe this more formally as $\displaystyle \Es f(k):=\lim_{K\to p}\E_{k\in\Phi_K}f(k)$ for some non-principal ultrafilter $p\in\beta\N\setminus\N$.}

\subsection{Jointly Pythagorean functions}

As a special case of the parametrization \eqref{eq:a=c=1_param}, a solution for the Pythagorean equation $x^2+y^2=z^2$ is given by
\begin{equation*}
    x=k(m^2-n^2),\quad y=2kmn,\quad z=k(m^2+n^2)
\end{equation*}
for any choice of $m,n,k\in\N$ with $m>n$.
We establish \cref{thm_mainpythagorean} by proving a stronger statement, namely that if $m,n,k\in\N$ are chosen at random in an appropriate sense, then with positive probability all $f_i(x),f_i(y),f_i(z)$ lie inside the given interval.
To make this statement more precise we introduce the following notion.

\begin{definition}[Jointly Pythagorean]
    Let $d\in\N$ and $f_1,\dots,f_d\in\mathcal{M}$. 
    We say that $f_1,\dots,f_d$ are \emph{jointly Pythagorean} if for every continuous $\psi:\S^{1}\to[0,1]$ with $\psi(1)>0$,
    \begin{equation*}
        \limsup_{N\to\infty}\E_{m,n\in[N]}\mathbf{1}_{m>n}\Es
        \prod_{j=1}^d \psi\Big(f_j\big(k(m^2-n^2)\big)\Big)\psi\Big(f_j\big(2kmn\big)\Big)\psi\Big(f_j\big(k(m^2+n^2)\big)\Big)>0.
    \end{equation*}
\end{definition}
\cref{thm_mainpythagorean} follows from the following stronger averaging result.

\begin{theorem}\label{thm:main}
    Any finite collection $f_1,\dots,f_d\in\M$ is jointly Pythagorean.
\end{theorem}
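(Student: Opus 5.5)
We plan to reduce the positivity statement to a Fourier-type analysis in the values of the $f_j$. First we approximate $\psi$ by trigonometric polynomials: by Fejér's theorem there is a nonnegative trigonometric polynomial $\phi(z)=\sum_{|\ell|\le L}\hat\phi(\ell)z^\ell$ with $\phi\le\psi+\delta$ and $\phi(1)$ bounded below. We may also replace $\psi$ by a smaller function, so it suffices to treat $\psi$ supported in a small arc. Expanding the product, the average becomes a finite linear combination of
\[
\E_{m,n\in[N]}\mathbf 1_{m>n}\Es \prod_j f_j^{\ell_j}(k)f_j^{\ell_j'}(k)f_j^{\ell_j''}(k)\,\prod_j f_j^{\ell_j}(m^2-n^2)f_j^{\ell_j'}(2mn)f_j^{\ell_j''}(m^2+n^2).
\]
The product over $j$ of powers of the $f_j$ is again in $\M$. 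We then split the finitely many functions $g=\prod_j f_j^{e_j}$, with $|e_j|\le L$, into pretentious and aperiodic ones. Concretely, the multiplicative group they generate, intersected with the bounded exponent box, is split by whether $g$ pretends to be some $\chi(n)n^{it}$.

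\textbf{Aperiodic terms.} The key step is to show that any frequency vector for which one of the three composite functions
\[
G_x=\prod_j f_j^{\ell_j},\qquad G_y=\prod_j f_j^{\ell'_j},\qquad G_z=\prod_j f_j^{\ell''_j}
\]
is aperiodic contributes $o(1)$. Since $m^2-n^2=(m-n)(m+n)$ and $2mn$ factor into linear forms, complete multiplicativity turns the summand into a product of multiplicative functions evaluated at linear forms in $(m,n)$, together with $G_z(m^2+n^2)$. Here we need Gowers uniformity of aperiodic functions from \cite{frantzikinakis2016higherorderfourieranalysis}. If an aperiodic function sits on a linear form, the generalized von Neumann theorem kills the average, provided we can handle the weight $G_z(m^2+n^2)$. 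We plan to do this by writing $m^2+n^2$ through Gaussian integers, $G_z(m^2+n^2)=\tilde G_z((m+in))\tilde G_z((m-in))$, which makes everything linear over $\Z[i]$. Alternatively, if only $G_z$ is aperiodic, we use the known result that aperiodic multiplicative functions have vanishing averages along an irreducible binary quadratic form after multiplying by pretentious linear-form factors. This is why the paper restricts to parametrizations with one irreducible polynomial.

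\textbf{Pretentious terms.} After discarding the aperiodic terms, only frequency vectors with $G_x,G_y,G_z$ all pretentious remain. Equivalently, we can replace each $f_j$ by its projection onto the "pretentious part" of the subgroup generated. We then use the concentration estimates of \cite{Klurman_2021} and \cite{frantzikinakis2024partitionregularitygeneralizedpythagorean}: for $m,n$ restricted to a suitable residue class modulo $Q$ and $k\in\Phi_K$ (which is divisible by $Q$), a pretentious $g$ satisfies $g(kP(m,n))\approx g(k)\,\exp(\text{deterministic phase})\,(P(m,n))^{it}$ with high probability. Averaging over $k$ in $\Es$ and over $(m,n)$ in a small multiplicative neighbourhood, for instance $m/n$ close to a fixed ratio so that the $n^{it}$ factors are nearly constant, we expect the restricted average to approximate that of the product of $\psi(f_j(k)\cdot u_{j,\cdot})$, where the $u$'s are close to $1$. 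This gives a lower bound like $\Es\prod_j\psi(f_j(k))^3$, which is positive by recurrence for a compact group rotation on $\mathbb T^d$.

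The main obstacle is reconciling the two steps. The decomposition must be done inside one average, so positivity from the pretentious part must survive the restriction to residue classes and ratio windows, while the aperiodic part still vanishes on those restricted regions. Before the limit we therefore intend to insert the weight $\mathbf 1_{m\equiv n\equiv 1 \bmod Q}$ and a window on $m/n$. We must also check that the uniformity results hold for such restricted averages, and that the "mixed" terms vanish, namely those with aperiodic $G$ on one polynomial and pretentious ones on the others.
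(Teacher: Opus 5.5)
Your overall skeleton is the paper's: expand $\psi$ in a Fourier series, kill the frequencies carrying an aperiodic function via \cref{thm:aperiodicvanishing} or the $k$-average, and analyse the surviving pretentious frequencies with concentration estimates. One step there is wrong, though. The ``known result'' you invoke when only $G_z$ is aperiodic is false. Take $f(p)=1$ for $p\not\equiv 3\pmod 4$ and $f(q)=q^{i}$ for $q\equiv 3\pmod 4$. Then $f$ is aperiodic, yet $f(m^2+n^2)=\prod_{q\equiv 3 (4)}q^{2i\,v_q(\gcd(m,n))}$ depends only on $\gcd(m,n)$ and has nonzero mean. That case should instead be killed by the $k$-average: $G_xG_yG_z$ is then aperiodic, hence nontrivial, so \cref{lem:folnervanish} applies, exactly as in \cref{cor:aperiodicJP}.

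The genuine gap is in the pretentious step, which is the heart of the theorem. Your assertion that the $u$'s are close to $1$ is unjustified and, in general, false. There are three separate problems.

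First, the phases produced by the concentration estimates depend on the polynomial and on $N$. For $f\sim\chi n^{it}$ one gets $f(P_z^Q)\approx P_z(Qm,Qn)^{it}\exp(G_{P_z}(f;K,N))$, but $f(P_x^Q)\approx P_x(Qm,Qn)^{it}\exp(2F(f;K,N))$. Since $\omega_{P_z}(p)=1+\chi_4(p)$, their ratio is $\exp\big(\sum_{K<p\le N}\frac{\chi_4(p)-1}{p}(f(p)\overline{\chi(p)}p^{-it}-1)\big)$. Pretentiousness only makes $\sum_p|f(p)\overline{\chi(p)}p^{-it}-1|^2/p$ finite, so this phase can wind around the circle as $N\to\infty$. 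For example, take $f(p)=e(1/\log\log p)$ for $p\equiv 3\pmod 4$ and $f(p)=1$ otherwise. At such scales $f(P_z^Q)\overline{f(P_x^Q)}$ is concentrated away from $1$ on the grid. The missing idea is to choose, for each $K$, scales $N$ at which all these finitely many phases are simultaneously near $1$ (\cref{lem:sequencesandseries}). This is why JP is defined with $\limsup_N$; a $\liminf$ version fails.

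Second, restricting $m/n$ to a window makes $(P_z/P_x)^{it_j}$ and $(P_z/P_y)^{it_j}$ nearly constant, not nearly $1$. The ratio must be chosen by simultaneous approximation for all $t_j$ at once (\cref{cor:weightforpolynomials}).

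Third, your residue class fails: $m\equiv n\equiv 1\pmod Q$ gives $Q\mid m-n$. More generally, $Q$ must be divisible by every prime up to $K$, including $2$, so one of $m,n,m\pm n$ always shares a factor with $Q$. This leaves an unconcentrated factor, such as $f(Qn)$ on the paper's grid $(Qm+1,Qn)$. The paper disposes of it by also averaging over $Q\in\Phi_K$, which kills every frequency on which this factor is not an exact Archimedean character. Making that dichotomy clean is precisely the role of \cref{lem:structuralResult}, and your ``projection onto the pretentious part'' would need it in some form.
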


Similarly, to establish \cref{thm_maingenpyth} we prove a stronger averaging theorem. 

\begin{definition}
\label{def:JP}
    Let $a,b,c\in\N$ be perfect squares satisfying either $a=c$ or $a+b=c$.
    Let $f_1,\dots,f_d\in\mathcal{M}$. 
    We say that $f_1,\dots,f_d$ are \emph{jointly Pythagorean (\emph{JP}) with respect to $a,b,c$} if for every continuous $\psi:\S^{1}\to[0,1]$ with $\psi(1)>0$,
    \begin{equation}\label{eq_jpdef}
        \limsup_{N\to\infty}\Eh~\Es~
        \prod_{j=1}^d \psi\Big(f_j\big(kP_x(m,n)\big)\Big)\psi\Big(f_j\big(kP_y(m,n)\big)\Big)\psi\Big(f_j\big(kP_z(m,n)\big)\Big)>0,
    \end{equation}
    where $P_x,P_y,P_z\in\Z[m,n]$ are described by \eqref{eq:a=c=1_param} if $a=c$, or by \eqref{eq:a+b=c_param} if $a+b=c$.
\end{definition}
Our main theorem is the following. Note that in particular it implies both Theorems \ref{thm:main} and \ref{thm_maingenpyth}.
\begin{theorem}\label{thm_jpforabc}
Let $a,b,c\in\N$ be perfect squares satisfying either $a=c$ or $a+b=c$.
    Then any finite collection $f_1,\dots,f_d\in\M$ is JP w.r.t. $a,b,c$.
\end{theorem}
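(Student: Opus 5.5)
We plan to prove \cref{thm_jpforabc} by splitting the functions into aperiodic and pretentious parts and treating each with a different tool. By Weierstrass approximation we may replace $\psi$ by a trigonometric polynomial $\sum_{|l|\le L}\hat\psi(l)z^l$ with $\hat\psi(0)>0$, up to a small uniform error. Expanding the product then writes the average in \eqref{eq_jpdef} as a finite linear combination of correlations of the form
$$\Eh\,\Es\, g_1(kP_x(m,n))\,g_2(kP_y(m,n))\,g_3(kP_z(m,n)),$$
where each $g_i=\prod_j f_j^{l_{ij}}\in\M$. Complete multiplicativity lets us factor out $g_1g_2g_3(k)$. For each $i$, the group generated by $f_1,\dots,f_d$ decomposes according to pretentiousness: either $g_i$ pretends to be $\chi(n)n^{it}$ for some Dirichlet character $\chi$ and some $t$, or it is aperiodic.

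\textbf{Step 1: aperiodic terms vanish.} Suppose some product $g_1g_2g_3$ appearing in a term has a factor evaluated at $P_z$ that is aperiodic. Since $P_x,P_y$ factor into linear forms while $P_z$ is the only irreducible quadratic, each term is an average over $\Delta_N$ of a product of multiplicative functions evaluated at linear forms $m\pm\lambda n$, together with one function evaluated at $m^2+bn^2$ (respectively $m^2+abn^2$). We would invoke the Gowers uniformity of aperiodic multiplicative functions from \cite{frantzikinakis2016higherorderfourieranalysis}, in its form for the binary quadratic form $m^2+Dn^2$ with $D$ a square. Since $D$ is a square, this form splits over $\Z[i]$ and is handled through the Gaussian-integer/linear-form reduction used there. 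A van der Corput or generalized von Neumann argument then shows that such correlations vanish as $N\to\infty$, uniformly in $k$. Aperiodic factors on the linear forms $P_x,P_y$ are handled in the same way. We expect this to be the main obstacle: we need a uniformity statement that holds jointly for the linear factors and the irreducible quadratic, over the cone $\Delta_N$, with the twist by $k$. This step is exactly why we restrict to perfect squares, and we would first try to reduce it to a single irreducible-polynomial correlation via the Cauchy--Schwarz inequality.

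\textbf{Step 2: concentration for pretentious functions.} After Step 1, the surviving terms involve only pretentious combinations. Equivalently, we may replace each $f_j$ by its pretentious part in a way compatible with the characters appearing in the expansion. For pretentious $f$ we use the concentration estimates of \cite{Klurman_2021} and \cite{frantzikinakis2024partitionregularitygeneralizedpythagorean}. These give that, for most $m,n$ in $\Delta_N$, $f(P(m,n))$ is close to $\chi$-type and $n^{it}$-type data times a constant depending only on $P$ modulo $Q$. The averaging over $k\in\Phi_K$ with $Q\mid k$ (using the property of \cref{def:Phi_K}) kills the character contribution. Restricting $(m,n)$ to a positive-density subset where $m,n$ are in a fixed residue class modulo $Q$, and where $P_x,P_y,P_z$ lie in a small dyadic-scale window where $n^{it}$ is nearly constant, then shows that all $f_j(kP_\ast(m,n))$ are approximately $f_j(k)\cdot c_{j,\ast}$ with $c_{j,\ast}$ close to $1$.

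\textbf{Step 3: positivity.} We combine these steps. The averaged quantity is at least the contribution from this positive-density set of $(m,n)$ and from those $k$ with all $f_j(k)$ near $1$. By the F{\o}lner property and the equidistribution of $(f_j(k))_j$ on its orbit closure (a compact subgroup of a torus, whose Haar measure gives positive mass to neighbourhoods of the identity), this set of $k$ has positive $\Es$-density. Since $\psi(1)>0$, the product of $\psi$'s is bounded below there, and this gives the positive $\limsup$ in \eqref{eq_jpdef}.
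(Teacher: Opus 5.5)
There is a genuine gap where Step 1 meets Steps 2--3. After expanding $\psi$ and discarding the vanishing frequencies, you are left with a \emph{signed} combination of surviving correlations. Your Step 3 is a physical-space lower bound that needs every $f_j(kP_\ast)$ to be near $1$ on a large set, and you have no way to certify this for aperiodic $f_j$.

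The sentence ``we may replace each $f_j$ by its pretentious part in a way compatible with the characters'' is exactly the missing idea, and it is not automatic. An aperiodic $f_j$ has no pretentious part, yet products $f^s$ can be pretentious (e.g.\ $f_2=f_1\cdot n^{it}$, or $f_1$ aperiodic with $f_1^2$ a quadratic character). The paper builds this splitting in \cref{lem:structuralResult}:
\begin{itemize}
\item On the subgroup of exponents $s$ with $f^s$ pretentious, write $f^s=\phi(s)\chi_s$ with $\phi(s)\sim n^{it}$.
\item Extend the homomorphism $\phi$ to all of $\Z^d$, using that $\{f\in\M: f\sim n^{it}\text{ for some }t\}$ is divisible, hence injective.
\item This gives $f_j=\tilde f_j h_j$ with $h_j\sim n^{it_j}$ and every $\tilde f^{\,s}$ either aperiodic or a Dirichlet character; \cref{lem:subgroupGenJP} then transfers JP back to the $f_j$.
\end{itemize}
Positivity is not read off a truncated Fourier series. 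On the $\tilde f$-variables, $\psi^{\otimes 3d}$ is replaced by its projection $\rho(z)=\int_{Z^\perp}\psi^{\otimes 3d}(zz')\,dz'$ onto the group $Z$ of surviving frequencies; the difference has only vanishing frequencies. One then shows that every surviving frequency evaluates to $1$. So the $\tilde f$-part contributes the constant $\rho(1,\dots,1)>0$, and only the $h_j$ remain.

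Surviving Dirichlet-character frequencies have to be handled in $(m,n)$, not in $k$. They equal $1$ at $P_x^Q$ and $P_z^Q$ (and at $P_y^Q$ when $a+b=c$), since these values are $\equiv 1\bmod Q$. When $a=c$, however, $P_y^Q=2\sqrt a\,(Qm+1)Qn$ is divisible by $Q$ and does not concentrate. There the paper needs an extra average over $Q\in\Phi_K$.

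Several individual steps also fail as written.

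\textbf{Step 1.} Aperiodicity at $P_z$ gives no cancellation. Take $g(2)=1$, $g(p)=1$ for $p\equiv 1\bmod 4$, and $g(p)=\pm1$ for $p\equiv 3\bmod 4$ with signs chosen (e.g.\ at random) so that $g$ is non-pretentious. Then $g$ is aperiodic, but $g(m^2+\beta^2n^2)\equiv 1$, because $v_p(x^2+y^2)$ is even for $p\equiv 3\bmod 4$. So no Gaussian-integer or Cauchy--Schwarz reduction to the quadratic alone can work. The correct split is:
\begin{itemize}
\item If the only aperiodic factor sits at $P_z$, your $g_1g_2g_3$ is aperiodic, hence non-trivial, and the $k$-average you already factored out vanishes by \cref{lem:folnervanish}.
\item If an aperiodic factor sits at $P_x$ or $P_y$, use \cref{thm:aperiodicvanishing}. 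It needs the aperiodic function on a linear form and allows an \emph{arbitrary} element of $\M$ at the irreducible quadratic.
\end{itemize}

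\textbf{Step 2.} The $k$-average does not remove character values in the surviving terms: there $g_1g_2g_3\equiv 1$ and $k$ drops out. Moreover, the concentration constants are not close to $1$. One has $f(P_z^Q)\approx P_z(Qm,Qn)^{it}\exp(G_{P_z}(f;K,N))$ but $f(P_x^Q)\approx P_x(Qm,Qn)^{it}\exp(2F(f;K,N))$, with prime weights $\omega_{P_z}(p)$ versus $2$. Since $f\sim\chi\cdot n^{it}$ controls only $\sum_p\big(1-\mathrm{Re}\, f(p)\overline{\chi(p)}p^{-it}\big)/p$, the phase of $\exp(G_{P_z}-2F)$ can keep rotating as $N\to\infty$. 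The paper handles this with \cref{lem:sequencesandseries}, choosing $N$ along a subsequence; this is why JP is defined with a $\limsup$.

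\textbf{Archimedean factors.} A dyadic window does not tame these. You need $(P_x/P_z)^{it_j}$ and $(P_y/P_z)^{it_j}$ near $1$ for all $j$ simultaneously. For $a=c$, where $P_y/P_z\approx 2\sqrt a\,n/m$, this requires the Kronecker-type choice of $n/m$ in \cref{cor:weightforpolynomials}.
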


The case $a=c$ is proven in \cref{thm:general_a=c}, and the case $a+b=c$ is proven in \cref{thm:general_a+b=c}.

\subsection{Acknowledgments}
The first author was funded through the FCT – Fundação para a Ciência e a Tecnologia, I.P., under the scope of the projects UID/00297/2025 (\url{https://doi.org/10.54499/UID/00297/2025}) and UID/PRR/00297/2025 (\url{https://doi.org/10.54499/UID/PRR/00297/2025}) (Center for Mathematics and Applications – NOVA Math), and the studentship with reference 2023.00487.BD (\url{https://doi.org/10.54499/2023.00487.BD}).
The second author was supported by the EPSRC Frontier Research Guarantee grant EP/Y014030/1.

\section{Background}

\subsection{Multiplicative functions}

Following Granville and Soundararajan \cite{granville2014multiplicative}, we recall here the basic notions and results from pretentious number theory that are used throughout this paper.
Recall that $\M$ denotes the set of all completely multiplicative functions $f:\N\to\S^1$.

\begin{definition}[Dirichlet character]
    Given $q\in\N$ and a multiplicative character $f:(\Z/q\Z)^\times\to\S^1$, define the associated \textit{Dirichlet character}\footnote{In the literature these are usually called ``modified Dirichlet characters'' as they differ from the common Dirichlet characters only on those finitely many primes that divide $q$. As classical Dirichlet characters are not in $\M$ (they take the value $0$) we chose to drop the word ``modified'' in this article.} as the function $\chi\in\M$ with values on primes $p$ given by
    $$\chi(p):=\begin{cases}
        1,&\text{if }p|q,\\
        f(p\bmod q),&\text{otherwise.}
    \end{cases}$$
\end{definition}

\begin{definition}[Pretentious distance]
    For $f,g\in\M$ and $x,y\in\R^+$ with $x<y$ define
    $$\mathbb{D}^2(f,g;x,y):= \sum_{x<p\leq y}\frac{1}{p}\Big|f(p)-g(p)\Big|^2.$$
    We also let
    $$\D^2(f,g):=\lim_{y\to\infty}\mathbb{D}^2(f,g;1,y)=
    \sum_{p\in\P}\frac{1}{p}\Big|f(p)-g(p)\Big|^2$$
    It can be shown that $\D$ is a metric. 
    Moreover, for any $f_1,f_2,g_1,g_2\in\M$ we have (see \cite[Lemma 3.1]{granville2005largecharactersumspretentious})
    $$\mathbb{D}(f_1f_2,g_1g_2)\leq \mathbb{D}(f_1,g_1)+\mathbb{D}(f_2,g_2).$$
\end{definition}

\begin{definition}[Pretentious function]\label{def:pretentiousFunction}
    If $f,g\in\M$ we say that \textit{$f$ pretends to be $g$}, and write $f\sim g$, if $\D(f,g)<\infty$. 
    If $f\sim\chi\cdot n^{it}$ for some $t\in\R$ and Dirichlet character $\chi$, we say that $f$ is \textit{pretentious}.
    In this case both $\chi$ and $t$ are uniquely determined.
\end{definition}

Note that the set of all pretentious functions form a subgroup of $\M$.
\begin{definition}[Aperiodic function]
    A function $f\in\M$ is said \textit{aperiodic} if for every $a,b\in\N$
    $$\lim_{N\to\infty}\frac{1}{N}\sum_{n=1}^Nf(an+b)=0.$$
\end{definition}

The next lemma, which is a standard result from pretentious number theory, makes it possible to split our arguments in two separate cases (see \cite[Corollary 1]{daboussi1982multiplicative}).

\begin{lemma}[Daboussi-Delange]
    Let $f\in\M$. Then either $f$ is pretentious or $f$ is aperiodic. The only $f\in\M$ that is both pretentious and aperiodic is the constant function $f\equiv1$.
\end{lemma}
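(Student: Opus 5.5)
The plan is to treat the two assertions separately: first show that a non-pretentious $f$ is aperiodic, then show that a pretentious $f$ is not aperiodic. Both parts reduce, by elementary manipulations, to mean values of $f$ twisted by Dirichlet characters, which are controlled by Halász's mean value theorem and by the Delange–Wirsing asymptotics (as in \cite{granville2014multiplicative}).

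\emph{Non-pretentious implies aperiodic.} Fix $a,b\in\N$ and set $e=\gcd(a,b)$, $q=a/e$, $b'=b/e$. Writing $n=en'$, we have $f(an+b)=f(e)f(qn+b')$ with $\gcd(b',q)=1$. So it suffices to show $\frac1N\sum_{n\le N,\,n\equiv b' \bmod q}f(n)\to0$.
\begin{itemize}
\item By orthogonality of characters of $(\Z/q\Z)^\times$, the indicator of $n\equiv b'\bmod q$ equals $\frac{1}{\varphi(q)}\sum_{\psi}\overline{\psi}(b')\,\psi(n)\,1_{(n,q)=1}$. Here each $\psi$ is read as the (modified) Dirichlet character of the paper, which is legitimate because $\psi(n)$ only matters on $(n,q)=1$.
\item Möbius inversion gives $1_{(n,q)=1}=\sum_{r\mid q}\mu(r)1_{r\mid n}$.
\item Since $g=f\psi$ is completely multiplicative, $\frac1N\sum_{n\le N,\,r\mid n}g(n)=\frac{g(r)}{r}\cdot\frac{1}{N/r}\sum_{m\le N/r}g(m)$.
\end{itemize}
Everything therefore reduces to showing $\frac1N\sum_{n\le N}f\psi(n)\to0$ for each Dirichlet character $\psi$. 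If $f$ is not pretentious, then $\D(f\psi,n^{it})=\D(f,\overline{\psi}n^{it})=\infty$ for every $t\in\R$. Halász's theorem then gives that the mean of $f\psi$ tends to $0$, because a unimodular multiplicative function with nonvanishing mean value must pretend to be some $n^{it}$.

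\emph{Pretentious implies not aperiodic.} Suppose $f\sim\chi n^{it}$ with $\chi$ of modulus $q$, and put $g=f\overline{\chi}\,1_{(\cdot,q)=1}$, which is multiplicative with $\D(g,n^{it})<\infty$ at primes $p\nmid q$. The Delange–Wirsing/Halász asymptotic gives
$$\frac1N\sum_{n\le N}g(n)=\frac{N^{it}}{1+it}\prod_{p\le N}\Big(1-\frac1p\Big)\sum_{k\ge0}\frac{g(p^k)}{p^{k(1+it)}}+o(1).$$
Because $f$ is completely multiplicative and unimodular, each local factor is $(1-1/p)/(1-g(p)p^{-1-it})\neq0$ for $p\nmid q$ (and equals $1-1/p$ for $p\mid q$). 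Since $\sum_p|1-g(p)p^{-it}|^2/p<\infty$, the product converges to a nonzero limit up to a slowly rotating phase. Hence the left side is not $o(1)$.

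On the other hand, $g$ is $f$ multiplied by a $q$-periodic weight. So $\sum_{n\le N}g(n)=\sum_{r\in(\Z/q\Z)^\times}\overline{\chi}(r)\sum_{n\le N,\,n\equiv r}f(n)$, which forces some progression $r\bmod q$ along which the averages of $f$ do not tend to $0$. Thus $f$ is not aperiodic. This shows that no pretentious $f$ is aperiodic except possibly a degenerate case, which should be the constant function named in the statement. Note, however, that under the stated definition the averages of $f\equiv1$ equal $1$, so that final clause holds only with the convention that the trivial function is set aside; I would simply record this trivial case separately.

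\emph{Main obstacle.} The main obstacle is the second part. Halász alone only gives upper bounds, so one needs the precise Delange–Wirsing-type asymptotic with a nonvanishing main term for functions pretending to be $n^{it}$. Complete multiplicativity (as opposed to mere multiplicativity) is what rules out a vanishing Euler factor at small primes. The bookkeeping with modified characters at primes dividing $q$ is routine once one restricts to $(n,q)=1$ as above.
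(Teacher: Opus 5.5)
Your proof is correct. The paper gives no proof of this lemma; it simply quotes it from \cite[Corollary 1]{daboussi1982multiplicative}. What you wrote is the standard argument behind that citation:
\begin{itemize}
\item Decomposing a progression into Dirichlet characters and applying the qualitative Halász theorem gives non-pretentious $\Rightarrow$ aperiodic.
\item Halász's asymptotic for functions with $\D(g,n^{it})<\infty$ gives the converse. Here complete multiplicativity enters through the non-vanishing of the Euler factors $(1-1/p)/(1-g(p)p^{-1-it})$.
\end{itemize}

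Two small remarks on economy. In the first half the Möbius step is not needed. Halász's theorem applies directly to $f$ times the classical character, which is multiplicative of modulus at most $1$ and differs from $f$ times the modified character at only finitely many primes.

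In the second half you can avoid the Halász asymptotic by using a tool the paper quotes anyway. Take $K$ large and $Q\in\Phi_K$. Then \cref{lem:redConcentration} with $a_0=1$, together with $|\exp(F(f;K,N))|=\exp(-\frac12\D^2(f,\chi n^{it};K,N))$, gives
$$\liminf_{N\to\infty}\Big|\frac1N\sum_{n\le N}f(Qn+1)\Big|\geq \frac{e^{-\frac12\D^2(f,\chi n^{it};K,\infty)}}{|1+it|}-O\big(\D(f,\chi n^{it};K,\infty)+K^{-1/2}\big).$$
The right-hand side is positive once $K$ is large, so $f$ is not aperiodic. The citation buys brevity; your derivation makes explicit that everything rests on Halász-type mean value theorems. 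The concentration route shows that, within this paper, the second half follows from the estimates already in use.

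On the final clause, drop the hedge ``except possibly a degenerate case''. Your second argument has no exceptional case: for $f\equiv1$ (with $q=1$, $t=0$) it returns mean value $1$. So what you have proved is that no $f\in\M$ is both pretentious and aperiodic. Under the paper's definition $f\equiv1$ is not aperiodic, so the lemma's last sentence is true only vacuously, as the implication ``both $\Rightarrow f\equiv1$''. The correct formulation is simply that the two classes are disjoint. The same remark applies to the parenthetical ``and not the constant $1$ function'' used after the lemma.
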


It follows directly from this lemma that if $f$ is aperiodic (and not the constant $1$ function) and $g$ is pretentious, then the product $fg$ is aperiodic; we will often use this fact tacitly.

We also make use of the following fact.

\begin{lemma}[{\cite[Lemma 3.2]{frantzikinakis2024partitionregularitypythagoreanpairs}}]\label{lem:folnervanish}
    Let $\Phi$ be a multiplicative F{\o}lner sequence and $f\in\M$ be non-trivial. Then
    $$\lim_{K\to\infty}\E_{n\in\Phi_K}f(n)=0.$$
\end{lemma}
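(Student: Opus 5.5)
The statement to prove is \cref{lem:folnervanish}: for non-trivial $f\in\M$ and any multiplicative F{\o}lner sequence $\Phi$, $\E_{n\in\Phi_K}f(n)\to0$. The idea is the standard invariance argument for averages of a non-trivial character along a F{\o}lner sequence. Since $f\not\equiv 1$ and $f$ is completely multiplicative, there is some $x\in\N$ (indeed a prime) with $f(x)\neq 1$. We fix such an $x$ and compare the average over $\Phi_K$ with the average over the dilate $x\cdot\Phi_K$.

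Write $A_K:=\E_{n\in\Phi_K}f(n)$. Since $n\mapsto xn$ is a bijection from $\Phi_K$ onto $x\cdot\Phi_K$, complete multiplicativity gives
$$\frac{1}{|\Phi_K|}\sum_{m\in x\Phi_K}f(m)=\frac{1}{|\Phi_K|}\sum_{n\in\Phi_K}f(xn)=f(x)A_K.$$
Next we compare the two sums directly. The difference $\sum_{m\in x\Phi_K}f(m)-\sum_{n\in\Phi_K}f(n)$ only involves elements of the symmetric difference $\Phi_K\,\triangle\,(x\Phi_K)$, and $|f|=1$. Hence
$$|f(x)A_K-A_K|\leq \frac{|\Phi_K\triangle x\Phi_K|}{|\Phi_K|}=\frac{|\Phi_K|-|\Phi_K\cap x\Phi_K|}{|\Phi_K|}+\frac{|x\Phi_K|-|\Phi_K\cap x\Phi_K|}{|\Phi_K|}.$$
Because $|x\Phi_K|=|\Phi_K|$, this equals $2\big(1-|\Phi_K\cap x\Phi_K|/|\Phi_K|\big)$, which tends to $0$ by the F{\o}lner property.

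It follows that $|1-f(x)|\,|A_K|\to0$, and since $f(x)\neq1$ we conclude $A_K\to0$.

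There is no serious obstacle here. The only points needing care are that the dilation map is injective, which is where $|x\Phi_K|=|\Phi_K|$ comes from, and that non-triviality of $f$ yields a single $x$ with $f(x)\ne1$. The F{\o}lner property only needs to be used for that one $x$.
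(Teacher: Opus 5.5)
Your proof is correct and complete. Pick $x$ with $f(x)\neq 1$; the identity $\E_{n\in\Phi_K}f(xn)=f(x)\,\E_{n\in\Phi_K}f(n)$, combined with the symmetric-difference bound $2\bigl(1-|\Phi_K\cap x\Phi_K|/|\Phi_K|\bigr)\to 0$, gives $|1-f(x)|\,\bigl|\E_{n\in\Phi_K}f(n)\bigr|\to0$. This is the standard F{\o}lner-invariance argument for the fact; the paper does not reprove the lemma but quotes it from \cite{frantzikinakis2024partitionregularitypythagoreanpairs}.
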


\subsection{Concentration estimates}\label{sec_concentration}

We make essential use of concentration estimates for pretentious functions, which allow one to control them on certain arithmetic progressions.
Given $Q\in\N$ and $P\in\Z[m,n]$, we denote $P(Qm+1,Qn)$ by $P^Q(m,n)$.

Throughout this section we make use of the F{\o}lner sequence $\Phi$ defined in \cref{def:Phi_K}.
For the reader's convenience, we reproduce here the full statement of \cite[Proposition 4.8]{frantzikinakis2024partitionregularitygeneralizedpythagorean}.

\begin{proposition}\label{prop:irrConcentration}
    Let $P\in\Z[m,n]$ be an irreducible binary quadratic form, $A,K,N\in\N$ and $f:\Z\to\C$ be an even multiplicative function with $\|f\|_\infty\leq1$, $t\in\R$, $\chi$ be a Dirichlet character with period $q$, $Q=\prod_{p\leq K}p^{a_p}$ for some $a_p\in\N$. 
    Let also $a_0,b_0\in\Z$ be such that $-A\leq a_0,b_0\leq A$, $c_0:= \gcd(P(a_0,b_0),Q)$, and suppose that $q$ and $\prod_{p\leq K}p$ both divide $Q/c_0$. Then there exist constants $C_{A,P,Q}$ such that if $K$ is sufficiently large, depending only on $P$, and $N$ is sufficiently large, depending only on $A,P,Q$, we have
    \begin{equation*}
        \begin{gathered}
        \E_{m,n\in[N]}\Big|f\big(P_{c_0}(Qm+a_0, Qn+b_0)\big) - \chi\big(P_{c_0}(a_0,b_0)\big)\cdot \big(P_{c_0}(Qm,Qn)\big)^{it}\cdot \exp\big(G_P(f;K,N)\big)\Big|\\
        \ll_P \big(\mathbb{D}_P+\mathbb{D}^2_P\big)\big(f,\chi\cdot n^{it}; K,\sqrt{N}\big) + C_{A,P,Q}\cdot \mathbb{D}_P\big(f, \chi\cdot n^{it}; \sqrt{N}, C_{A,P,Q}N^2\big) + K^{-1/2},
        \end{gathered}
    \end{equation*}
    where $P_{c_0}:= P/c_0$ and
    \begin{equation}\label{eq:G(f;K,N)def}
        G_P\big(f;K,N\big):= \sum_{K<p\leq N} \frac{\omega_P(p)}{p}\Big(f(p)\cdot\overline{\chi(p)}\cdot p^{-it}-1\Big),
    \end{equation}
    \begin{equation*}
        \mathbb{D}^2_P\big(f,g; x,y\big):= \sum_{x<p\leq y} \frac{\omega_P(p)}{p}\Big(1-\mathfrak{R}\big(f(p)\cdot\overline{g(p)}\big)\Big),
    \end{equation*}
    where $\omega_P(p):=\{n\in\Z_p: P(n,1)=0\bmod{p}\}.$
\end{proposition}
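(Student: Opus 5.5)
Although this is quoted from \cite[Proposition 4.8]{frantzikinakis2024partitionregularitygeneralizedpythagorean}, here is how I would prove it. The approach is the standard Klurman-type concentration argument: strip off the model $\chi\cdot n^{it}$, then apply a Turán--Kubilius inequality over polynomial values.

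\emph{Step 1: reduce to a function pretending to be $1$.} Write $v(m,n):=P_{c_0}(Qm+a_0,Qn+b_0)$. Since $v\equiv P(a_0,b_0)/c_0$ modulo $Q/c_0$, and $\prod_{p\le K}p$ divides $Q/c_0$, every value $v(m,n)$ is coprime to all primes $p\le K$. Because $q\mid Q/c_0$, periodicity gives $\chi(v(m,n))=\chi(P_{c_0}(a_0,b_0))$. For $|a_0|,|b_0|\le A$ and most $(m,n)\in[N]^2$ (all but a proportion $O_{A,Q}(N^{-1})$, namely those near the zero set of $P$ or with small coordinates), we have
$$\bigl|v(m,n)^{it}-P_{c_0}(Qm,Qn)^{it}\bigr|\ll_{A,P,Q,t}N^{-1}.$$
Setting $g:=f\cdot\overline{\chi}\cdot n^{-it}$, it therefore suffices to show that
$$\E_{m,n\in[N]}\bigl|g(v(m,n))-\exp(G_P(f;K,N))\bigr|$$
is bounded by the right-hand side, where $g$ is evaluated only at integers free of primes $\le K$. (The evenness of $f$ handles possibly negative values of $v$.)

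\emph{Step 2: pass to an additive function.} For $n$ free of primes $\le K$, set $h(n):=\sum_{p\| n,\,p> K}(g(p)-1)$. The elementary inequality $|\prod z_p-\exp(\sum(z_p-1))|\ll\sum|z_p-1|^2$ for $|z_p|\le1$, together with the fact that prime powers $p^2\mid v$ with $p>K$ occur with density $\ll\sum_{p>K}p^{-2}\ll K^{-1}$, reduces matters to bounding $\E|h(v)-G_P|$ plus a quadratic error term. That quadratic term averages to $\ll\D_P^2(f,\chi n^{it};K,\sqrt N)$ on the range $p\le\sqrt N$. Here we use $|e^{h}-e^{G}|\le|h-G|$ when $\Re h,\Re G\le0$.

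\emph{Step 3: Turán--Kubilius and large primes.} Split $h=h_{\le\sqrt N}+h_{>\sqrt N}$. For the small-prime part, I need local densities of the form
$$\#\{(m,n)\in(\Z/p)^2:p\mid v\}/p^2=\omega_P(p)/p+O(p^{-2}),$$
which holds since $P$ is irreducible and $p\nmid Q$ (the shift by $a_0,b_0$ is then a bijection). I also need the analogous statements for pairs of primes $p_1p_2$ with error $O(1/N)$ when $p_1p_2\le N$. A Turán--Kubilius variance computation then gives
$$\E\bigl|h_{\le\sqrt N}(v)-G_P(f;K,\sqrt N)\bigr|^2\ll\D_P^2+K^{-1},$$
and by Cauchy--Schwarz this yields the $\D_P$ term and the $K^{-1/2}$ term. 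For primes in $(\sqrt N,C_{A,P,Q}N^2]$ (note $|v|\le C_{A,P,Q}N^2$), $v$ has $O(1)$ such prime factors. A first-moment bound, which needs only the density count for single primes $p\le C N^2$ (valid up to $N$ by lattice-point counting and handled beyond via the bounded number of factors), bounds $\E|h_{>\sqrt N}(v)|$ by $C_{A,P,Q}\,\D_P(f,\chi n^{it};\sqrt N,CN^2)$. The same bound controls $G_P$ restricted to $\sqrt N<p\le N$.

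I expect Step 3 to be the main obstacle, specifically making the local density counts for $p\mid v(m,n)$ uniform in $p$ up to $N^2$ and in the shifts $a_0,b_0$. For primes beyond $N$, exact equidistribution modulo $p$ fails, so I would bound the count by $\ll_{A,P,Q}N^2\omega_P(p)/p$ using the factorization of $P$ over $\Z/p$ into $\omega_P(p)$ linear forms and counting lattice points on lines. This loses only a constant, which is absorbed into $C_{A,P,Q}$.
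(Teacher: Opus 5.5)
The paper contains no proof of this proposition. It is quoted verbatim from \cite[Proposition 4.8]{frantzikinakis2024partitionregularitygeneralizedpythagorean} and used only through \cref{cor:irrConcentration}, so there is no in-text argument to compare with. Your outline is the standard route for concentration estimates of this type: twist out $\chi\cdot n^{it}$, compare $g(v)$ with the exponential of an additive function, apply Tur\'an--Kubilius for $p\le\sqrt N$, and use a first-moment bound for $\sqrt N<p\le CN^2$. Most of it is sound. The box errors in the pair counts can indeed be absorbed into $K^{-1/2}$, because $N$ may be taken large in terms of $Q\geq\prod_{p\le K}p$.

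The genuine gap is in the step you flagged. For $N<p\le CN^2$, counting solutions of a congruence $m\equiv rn+c \pmod{p}$ in $[N]^2$ line by line gives at most one $m$ for each $n$, so only $\ll N$ points per prime. After weighting by $|g(p)-1|$ and dividing by $N^2$, this is not controlled by $\mathbb{D}_P(f,\chi\cdot n^{it};\sqrt N,CN^2)$: if $|g(p)-1|=\eta$ for all $p$, it is $\asymp \eta N/\log N$. The fact that $v$ has $O(1)$ prime factors above $\sqrt N$ does not help, since the first moment is a sum over primes. What is missing is a lower bound on the shortest vector:
\begin{itemize}
\item Since $p\nmid Q$, the pairs $(m,n)$ with $p\mid v(m,n)$ lie in translates of the lattices $\Lambda_r=\{(x,y)\in\Z^2: x\equiv ry \pmod{p}\}$, one for each of the $\omega_P(p)$ roots $r$ of $P(\cdot,1)$ modulo $p$. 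There is also a translate of $p\Z^2$, which contributes negligibly.
\item Each $\Lambda_r$ has covolume $p$. Every nonzero $(x,y)\in\Lambda_r$ satisfies $p\mid P(x,y)\neq 0$, because an irreducible form has no nontrivial rational zero. Hence $\|(x,y)\|\gg_P\sqrt p$.
\item It follows that each translate meets $[N]^2$ in $\ll N^2/p+N/\sqrt p+1\ll_{P,C} N^2/p$ points for all $p\le CN^2$. This is exactly the count your first-moment bound needs.
\end{itemize}
The same input is needed in Step~2. There $p^2\mid v$ must be excluded for all $K<p\ll_{A,P,Q}N$, not only for $p^2\le N$ where the Chinese remainder count works. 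The analogous lattices modulo $p^2$ have minimum $\gg_P p$, giving $\ll N^2/p^2+N/p+1$ points and a total proportion $\ll K^{-1}+o(1)$.

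Two further points in Step~1 come from imprecisions in the quoted statement rather than from your method.
\begin{itemize}
\item Your error $\ll_{A,P,Q,t}N^{-1}$ makes the threshold for $N$ depend on $t$, which the statement does not allow. This is unavoidable: take $P=m^2+n^2$, $f(n)=|n|^{it}$, $\chi\equiv 1$, $(a_0,b_0)=(1,0)$ and $t=\pi QN$. Then the right-hand side is $\ll_P K^{-1/2}$ while the left-hand side is $\gg 1$.
\item Your parenthetical claim that evenness of $f$ handles negative values of $v$ fails when $\chi$ is odd. For $v<0$ one has $f(v)=f(|v|)\approx\chi(-1)\chi(P_{c_0}(a_0,b_0))|v|^{it}\exp(G_P(f;K,N))$. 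For instance, take $P=m^2-2n^2$, $\chi$ the non-principal character modulo $4$, $f(n)=\chi(|n|)$, $t=0$, $(a_0,b_0)=(1,0)$ and $4\mid Q$. The left-hand side is then $\approx 2\bigl(1-\tfrac{1}{2\sqrt2}\bigr)$, against a right-hand side $\ll_P K^{-1/2}$.
\end{itemize}
So the proposition, and your proof, should be read with $t$ fixed and $P$ positive definite (or $\chi$ even). This covers every use in the paper, where $P=P_z$, $t=t_j$ is fixed and $N\to\infty$.
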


We only make use of this proposition in the form of the following corollary.
\begin{corollary}\label{cor:irrConcentration}
    Let $P\in\Z[m,n]$ be an irreducible binary quadratic form satisfying $P(1,0)=1$, and $f\in\M$ such that $f\sim \chi\cdot n^{it}$ for some $t\in\R$ and Dirichlet character $\chi$. 
    Let $\epsilon>0$. 
    For large enough $K$, any $Q\in\Phi_K$ satisfies
    \begin{equation}\label{eq_cor:Prop4.8}
        \limsup_{N\to\infty} \Eh \Big|f\big(P(Qm+1,Qn)\big)-\big(P(Qm,Qn)\big)^{it}\cdot\exp\big(G_P(f;K,N)\big)\Big| \leq \epsilon
    \end{equation}
\end{corollary}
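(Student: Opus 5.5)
Corollary~\ref{cor:irrConcentration} is the special case of Proposition~\ref{prop:irrConcentration} with $a_0=1$, $b_0=0$, $A=1$. Two extra steps are needed: pass from $\E_{m,n\in[N]}$ to $\Eh$, and make the error terms small by choosing $K$ large and letting $N\to\infty$.

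\textbf{Checking the hypotheses.} Since $P(1,0)=1$, we have $P(a_0,b_0)=1$, so $c_0=\gcd(1,Q)=1$ and $P_{c_0}=P$. The factor $\chi(P_{c_0}(a_0,b_0))=\chi(1)=1$ disappears, leaving exactly the main term $(P(Qm,Qn))^{it}\exp(G_P(f;K,N))$ of \eqref{eq_cor:Prop4.8}. Let $q$ be the period of $\chi$. For $K\geq q$ and $Q=\prod_{p\le K}p^{a_p}\in\Phi_K$ we have $a_p>K$, so $Q/c_0=Q$ is divisible by $q$ and by $\prod_{p\le K}p$.

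The proposition requires $f:\Z\to\C$ to be even. We extend $f$ to $\Z$ by $f(-n)=f(n)$ (and $f(0)=0$). This does not affect the averages, because on $\Delta_N$ the arguments $P(Qm+1,Qn)$ are positive integers for the forms we use. This positivity is why the region $m>(a+2b)n$ was imposed, and for $P_z$ it is automatic.

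\textbf{Controlling the error terms.} Fix $\epsilon>0$.
\begin{itemize}
\item Since $\D(f,\chi n^{it})<\infty$ and $\omega_P(p)\le 2$ for all but finitely many $p$, the tail $\mathbb{D}_P^2(f,\chi n^{it};K,\infty)\ll \sum_{p>K}\frac1p|f(p)-\chi(p)p^{-it}|^2$ tends to $0$ as $K\to\infty$. Hence $(\mathbb{D}_P+\mathbb{D}_P^2)(f,\chi n^{it};K,\sqrt N)$ is small uniformly in $N$.
\item The term $K^{-1/2}$ is also small once $K$ is large.
\item The middle term carries the constant $C_{A,P,Q}$, which depends on $Q$. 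Once $Q$ is fixed, however, $\mathbb{D}_P(f,\chi n^{it};\sqrt N,C N^2)\le \mathbb{D}_P(f,\chi n^{it};\sqrt N,\infty)\to0$ as $N\to\infty$. So it vanishes in the $\limsup_N$.
\end{itemize}
The order of choices is therefore: first choose $K$ so that the $K$-dependent terms sum to at most $\epsilon/(2C')$, where $C'$ is the constant lost in the change of averaging below; then take any $Q\in\Phi_K$; finally let $N\to\infty$.

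\textbf{Changing the averaging set.} This is the main, if mild, obstacle. Since $\Delta_N\subset[N]^2$ and $|\Delta_N|\ge c_{a,b}N^2$ for large $N$ (with $c_{a,b}\approx 1/(2(a+2b))$), and the summand is nonnegative, we get
\[
\Eh|F|\le c_{a,b}^{-1}\,\E_{m,n\in[N]}|F|,
\]
where $F(m,n)=f(P(Qm+1,Qn))-(P(Qm,Qn))^{it}\exp(G_P(f;K,N))$. This is why $C'=c_{a,b}^{-1}$ must be absorbed into the choice of $\epsilon$ in the previous step.

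One subtlety remains. The term $|\exp(G_P)|$ is bounded, since $\Re G_P\le0$, so the summand is bounded by $2$. This boundedness is not needed for the inequality above, but it does ensure the $\limsup$ is finite. Combining the three steps gives \eqref{eq_cor:Prop4.8}.
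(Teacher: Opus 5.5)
Your proposal is correct and follows the paper's proof: you specialize \cref{prop:irrConcentration} to $A=a_0=1$, $b_0=0$, so that $c_0=1$ and the character factor is $1$. You then make the $K$-tail terms small using $\D(f,\chi\cdot n^{it})<\infty$ together with $\omega_P(p)\le 2$, let the $C_{A,P,Q}$-term vanish as $N\to\infty$ for fixed $Q$, and absorb the loss from $|\Delta_N|\gg N^2$ into $\epsilon$ (minor sign typo: the tail bound should involve $|f(p)-\chi(p)p^{it}|^2$, not $p^{-it}$).
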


\begin{proof}
    We apply \cref{prop:irrConcentration} with $A=a_0=1$, $b_0=0$ and $Q\in\Phi_K$ arbitrary, noting that for sufficiently large $K$ (depending only on $P$), any $Q\in\Phi_K$ satisfies the necessary hypothesis. In particular, $c_0=P(1,0)=1$, which implies that $P_{c_0}=P$ and $\chi(P_{c_0}(1,0))=1$

    Let $C=C_{A,P,Q}$ given by \cref{prop:irrConcentration}.
    Since $f\sim \chi\cdot n^{it}$,
    $$\mathbb{D}^2(f,\chi\cdot n^{it}):= \sum_{p\in\mathbb{P}}\frac{1}{p}\Big(1-\mathfrak{R}\Big(f(p)\cdot\overline{\chi(p)}p^{-it}\Big)\Big)< \infty.$$
    The weight $\omega_P(p)$ is at most $2$ by \cite[Lemma 4.3]{frantzikinakis2024partitionregularitygeneralizedpythagorean}. Hence $\mathbb{D}^2_P(f,\chi\cdot n^{it})<\infty$, implying that 
    $$\lim_{K\to\infty}\big(\mathbb{D}_P+\mathbb{D}^2_P\big)\big(f,\chi\cdot n^{it}; K,\infty\big) + K^{-1/2}= \lim_{N\to\infty}C\mathbb{D}_P\big(f,\chi\cdot n^{it}; \sqrt{N}, CN^2\big)=0.$$
    So, for any sufficiently large $K$ we have 
    \begin{equation*}
        \limsup_{N\to\infty} \E_{m,n\in[N]} \Big|f\big(P(Qm+1,Qn)\big)-\big(P(Qm,Qn)\big)^{it}\cdot\exp\big(G_P(f;K,N)\big)\Big| \leq \epsilon.
    \end{equation*}
    Since $|\Delta_N|\gg N^2$, by making $\epsilon$ smaller we obtain \eqref{eq_cor:Prop4.8}.
\end{proof}

We also need \cite[Lemma 2.5]{Klurman_2021} which we now recall.

\begin{lemma}\label{lem:redConcentration}
    Let $f:\N\to\S^1$ be a multiplicative function such that $f\sim\chi\cdot n^{it}$ for some $t\in\R$ and Dirichlet character $\chi$ of conductor $q$. 
    Let $Q,K\geq 1$ be integers satisfying $\prod_{p\leq K}p\mid Q$ and $q\mid Q$. 
    Then, for any $1\leq a_0\leq Q$ with $(a_0,Q)=1$, we have
    $$\sum_{n\leq N}\Big|f\big(Qn+a_0\big)-\chi(a_0)\cdot (Qn)^{it}\cdot \exp\big(F_Q(f;N)\big)\Big|\ll N\Big(\mathbb{D}\big(f,\chi\cdot n^{it}; K,N\big) + K^{-1/2}\Big),$$
    where
    \begin{equation*}
        F_Q(f;N):= \sum_{p\leq N,\,p\nmid Q} \frac{1}{p}\left(f(p)\cdot \overline{\chi(p)}p^{-it}-1\right).
    \end{equation*}
\end{lemma}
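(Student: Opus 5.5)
Since the statement is \cite[Lemma 2.5]{Klurman_2021}, one could simply cite it. Here is how I would prove it directly. The idea is to remove the archimedean and periodic parts of $f$, which reduces everything to a function pretending to be $1$. For that function, a Turán--Kubilius argument restricted to the progression $Qn+a_0$ does the work.

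\emph{Step 1: stripping $\chi$ and $n^{it}$.} Set $g:=f\cdot\overline{\chi}\cdot n^{-it}$. This is completely multiplicative, takes values in $\S^1$, and satisfies $\D(g,1;K,N)=\D(f,\chi\cdot n^{it};K,N)$.
\begin{itemize}
\item Since $q\mid Q$, we have $\chi(Qn+a_0)=\chi(a_0)$.
\item We have $(Qn+a_0)^{it}=(Qn)^{it}\bigl(1+O(|t|/n)\bigr)$. Summed over $n\le N$, this costs $O_t(\log N)$, which is $o(N)$ and is harmless. (Alternatively, small $n$ can be absorbed into the implied constant.)
\end{itemize}
It therefore suffices to show
$$\sum_{n\le N}\bigl|g(Qn+a_0)-\exp(F_Q(f;N))\bigr|\ll N\bigl(\D(g,1;K,N)+K^{-1/2}\bigr).$$
Note also that $F_Q(f;N)=\sum_{p\le N,\,p\nmid Q}(g(p)-1)/p$.

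\emph{Step 2: exponential of an additive function.} Every $m=Qn+a_0$ is coprime to $Q$, so all prime factors of $m$ exceed $K$. Define the additive function
$$h(m):=\sum_{p\mid m,\ K<p\le N}(g(p)-1).$$
Each factor has $\mathrm{Re}(g(p)-1)\le 0$, and $|z-e^{z-1}|\ll|z-1|^2$ for $|z|=1$. Using the telescoping bound $|\prod z_j-\prod w_j|\le\sum|z_j-w_j|$ for unimodular (or modulus $\le1$) factors, I would bound $|g(m)-e^{h(m)}|$ by the sum of three contributions:
\begin{itemize}
\item $\sum_{p\mid m,\,p>K}|g(p)-1|^2$;
\item a term from prime powers $p^2\mid m$ with $p>K$;
\item a term from the $O(\log Q/\log N)$ prime factors of $m$ exceeding $N$.
\end{itemize}
Averaging over $n\le N$, using that $p\mid Qn+a_0$ for a proportion $1/p+O(1/N)$ of $n$ when $p\nmid Q$, these contribute respectively
\begin{itemize}
\item $\ll \D^2(g,1;K,N)$;
\item $\ll\sum_{p>K}p^{-2}\ll K^{-1}$;
\item a term that is negligible, or can be handled by truncating at $\sqrt N$ and absorbing the range $(\sqrt N,N]$ into $\D$.
\end{itemize}

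\emph{Step 3: concentration of $h$ (the main step).} Since $\mathrm{Re}\,h,\ \mathrm{Re}\,F\le0$, we have $|e^{h(m)}-e^{F}|\le|h(m)-F|$. By Cauchy--Schwarz it then suffices to prove a Turán--Kubilius inequality on the progression:
$$\frac1N\sum_{n\le N}\Bigl|h(Qn+a_0)-\sum_{K<p\le N,\,p\nmid Q}\frac{g(p)-1}{p}\Bigr|^2\ll\sum_{K<p\le N}\frac{|g(p)-1|^2}{p}+\text{(small)}.$$
To prove it, I would expand the square. The diagonal terms give the right-hand side. For the off-diagonal terms with $p\ne p'$, $p,p'\nmid Q$, the Chinese remainder theorem gives $\#\{n\le N:pp'\mid Qn+a_0\}=N/(pp')+O(1)$. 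I expect this to be the main obstacle: the $O(1)$ errors summed over $p,p'\le N$ are too large. The standard fix I would try first is to restrict $h$ to primes $p\le N^{1/3}$, so that $pp'\le N^{2/3}$ and the total error is $O(N^{2/3})$. The discarded range $N^{1/3}<p\le N$ is then controlled directly in $L^1$: there are at most $O(1)$ such prime divisors per $m$, and their total contribution is bounded by $\sum_{N^{1/3}<p\le N}|g(p)-1|/p\ll\D(g,1;K,N)$ via Cauchy--Schwarz, since $\sum_{N^{1/3}<p\le N}1/p\ll1$. It must also be checked that all constants are uniform in $Q$ and $a_0$, which holds because only divisibility by primes $p\nmid Q$ enters.

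Collecting the bounds $\D+\D^2+K^{-1/2}$ and using $\D^2\ll\D$ (since $\D$ is bounded when $|g(p)-1|\le2$ only after truncation; otherwise use $\min(\D,\D^2)$ together with the trivial bound $2$) yields the lemma.
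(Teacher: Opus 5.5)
The paper does not prove this lemma; it only quotes \cite[Lemma 2.5]{Klurman_2021}. Your Tur\'an--Kubilius route is the standard one, and it is sound for the prime factors $p\le N$: strip $\chi$ and $n^{it}$, compare $g(m)$ with $e^{h(m)}$ by telescoping, truncate the variance computation at $N^{1/3}$, and bound $(N^{1/3},N]$ in $L^1$ by $\D$.

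\textbf{The gap.} The genuine gap is the third bullet of Step~2: the prime factors of $m=Qn+a_0$ that exceed $N$.
\begin{itemize}
\item Your claim that $m$ has $O(\log Q/\log N)$ such factors is false pointwise. If $N\le K$, every prime factor of $m$ exceeds $N$.
\item Truncating at $\sqrt N$ and absorbing $(\sqrt N,N]$ into $\D$ does not reach these primes. The values $f(p)$ with $p>N$ enter neither $\D(f,\chi\cdot n^{it};K,N)$ nor $F_Q(f;N)$, so nothing on the right-hand side controls them.
\end{itemize}

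\textbf{What is true: an averaged bound.} Since $\#\{n\le N:p^j\mid Qn+a_0\}\ge N/p^j-1$ for $p\nmid Q$, Mertens' theorem gives
\[
\frac1N\sum_{n\le N}\ \sum_{\substack{p^j\mid Qn+a_0\\ p^j\le N}}\log p\ \ge\ \log N-\sum_{p\mid Q}\frac{\log p}{p-1}-O(1).
\]
Compare this with $\log(Qn+a_0)\le\log Q+\log(N+1)$. It follows that, on average over $n\le N$, two counts are $\ll\log Q/\log N$:
\begin{itemize}
\item the number of primes $p>N$ dividing $Qn+a_0$;
\item the number of primes $p>\sqrt N$ with $p^2\mid Qn+a_0$. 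Your bound $\sum_{p>K}p^{-2}$ also ignores the $O(1/N)$ counting errors for these.
\end{itemize}
This costs an extra $O(N\log Q/\log N)$. That is $o(N)$ for fixed $Q$, but it is not $\ll N(\D+K^{-1/2})$.

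\textbf{No argument can do better.} The inequality as literally stated fails when $N$ is small compared with $Q$. Take $N=1$, $\chi=1$, $t=0$, $a_0=1$ and $Q=\prod_{p\le K}p$. Pick a prime $p_0\mid Q+1$ (necessarily $p_0>K$) and set $v=v_{p_0}(Q+1)$. Let $f(p_0)=e^{i\pi/v}$ and $f(p)=1$ for $p\ne p_0$. Then:
\begin{itemize}
\item $f\sim 1$, $\D(f,1;K,1)=0$ and $F_Q(f;1)=0$;
\item the left-hand side is $|f(Q+1)-1|=2$;
\item the right-hand side is $\ll K^{-1/2}$.
\end{itemize}

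\textbf{How to read and repair the lemma.} The lemma has to be read for $N$ sufficiently large in terms of $Q$ and $t$, or equivalently with an extra $o_{Q,t}(1)$ error as $N\to\infty$. The same applies to your $O_t(\log N)$ loss from replacing $(Qn+a_0)^{it}$ by $(Qn)^{it}$. It is harmless only in that regime, not by ``absorbing small $n$ into the implied constant''. This is exactly how the lemma is used in \cref{cor:redConcentration}, where $Q$ is fixed and $\limsup_{N\to\infty}$ is taken. With that reading, and with the averaged large-prime bound above replacing your third bullet, the rest of your argument goes through.
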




Again, we only make use of this lemma in the form of the next corollary.
\begin{corollary}\label{cor:redConcentration}
    Suppose $f\in\M$ is such that $f\sim\chi\cdot n^{it}$ for some $t\in\R$ and Dirichlet character $\chi$. Let $\epsilon>0$. For any $l_1,l_2\in\Z$ not both zero and large enough $K$, any $Q\in\Phi_K$ satisfies
    \begin{equation*}
        \limsup_{N\to\infty}\Eh \big|f(Q(l_1m+l_2n)+1)-(Q(l_1m+l_2n))^{it}\cdot\exp(F(f;K,N))\big|\leq \epsilon,
    \end{equation*}
    where
    \begin{equation}\label{eq:F(f;K,N)def}
        F(f;K,N):= \sum_{K<p\leq N}\frac{1}{p}\left(f(p)\cdot\overline{\chi(p)}p^{-it}-1\right).
    \end{equation}
\end{corollary}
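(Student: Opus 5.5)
The plan is to deduce \cref{cor:redConcentration} from \cref{lem:redConcentration}, applied along the one-variable progressions that appear when we fix one of the two variables. Write $L(m,n)=l_1m+l_2n$.

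\emph{Choice of $K$ and verification of the hypotheses.} Since $f\sim\chi\cdot n^{it}$, the tail $\mathbb{D}(f,\chi\cdot n^{it};K,\infty)$ tends to $0$ as $K\to\infty$. Choose $K$ so large that:
\begin{itemize}
\item $K>q$, where $q$ is the conductor of $\chi$;
\item $K>|l_1|+|l_2|$;
\item $\mathbb{D}(f,\chi\cdot n^{it};K,\infty)+K^{-1/2}$ is a small multiple of $\epsilon$.
\end{itemize}
Now take any $Q\in\Phi_K$. Every prime $p\le K$ divides $Q$, with exponent greater than $K$, so $\prod_{p\le K}p\mid Q$ and $q\mid Q$. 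Moreover $F_Q(f;N)$ from \cref{lem:redConcentration} coincides with $F(f;K,N)$, because the primes not dividing $Q$ are exactly those with $p>K$. We use $a_0=1$, so $\chi(a_0)=1$.

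\emph{Reduction to one-variable sums.} Assume without loss of generality that $l_1\neq0$; the case $l_1=0$ is symmetric, with the roles of $m$ and $n$ swapped. Fix $n\le N$. As $m$ runs over $\{m:(m,n)\in\Delta_N\}$, which is an interval of length at most $N$, the value $L$ runs over an arithmetic progression with step $|l_1|$. Taking $l_1\ge1$ after a sign change, the values $f(QL+1)$ are terms of the form $f(Q\ell+1)$ with $\ell$ in a range of length $O(|l_1|N)$.

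There are two technical points here.
\begin{itemize}
\item $L$ may be negative or zero. Since $f$ is only defined on $\N$, and in the region $m>(a+2b)n$ the quantity $L$ could still be negative when $l_1<0$, I would replace $(l_1,l_2)$ by $(-l_1,-l_2)$ where needed. Alternatively I would note that for the intended use $L>0$ on $\Delta_N$. Otherwise the statement needs $f$ extended as an even function, which I would treat via $f(-x)=f(x)$ and $(-x)^{it}$ conventions.
\item The sum runs over $\ell$ restricted to a residue class mod $l_1$, not over all $\ell\le N$. Since the summands are nonnegative, I would crudely bound the sum over the progression by the full sum over $\ell\le |l_1|N+|l_2|N$ of the same absolute values.
\end{itemize}

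\emph{Bounding the sum and averaging.} By \cref{lem:redConcentration}, the full sum is
$$\ll N'\big(\mathbb{D}(f,\chi n^{it};K,N')+K^{-1/2}\big)\quad\text{with } N'=(|l_1|+|l_2|)N.$$
One subtlety is the phase: the lemma has $(Q\ell)^{it}$ and $\exp(F_Q(f;N'))$ rather than $\exp(F(f;K,N))$. The difference $F(f;K,N')-F(f;K,N)$ is a sum over $N<p\le N'$ of terms of size $\ll1/p$, which tends to $0$ as $N\to\infty$. Since $|e^u-e^v|\le|u-v|$ for $\mathfrak{R}\,u,\mathfrak{R}\,v\le0$, this changes the error by $o(1)$.

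Summing over the $N$ choices of $n$ and dividing by $|\Delta_N|\gg N^2$, we get
$$\limsup_{N\to\infty}(\cdots)\ll_{l_1,l_2,a,b}\ \mathbb{D}(f,\chi n^{it};K,\infty)+K^{-1/2},$$
which is $\le\epsilon$ by the choice of $K$.

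\emph{Expected obstacle.} I expect the main care to be needed in the bookkeeping of the positivity and sign of $L$ and the passage from the progression to a full interval. The analytic input is entirely \cref{lem:redConcentration}.
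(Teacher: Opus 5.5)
Your proposal is correct and follows the paper's proof: apply \cref{lem:redConcentration} with $a_0=1$, observe that $F_Q(f;N)=F(f;K,N)$ for $Q\in\Phi_K$, and pass from the one-variable estimate to the average over $\Delta_N$ using $|\Delta_N|\gg N^2$. The paper outsources the one-to-two-variable step to \cite[Lemma 3.1]{frantzikinakis2024partitionregularitygeneralizedpythagorean}, whereas you do it by hand (slicing in $n$, dominating each slice by the full sum up to $N'=(|l_1|+|l_2|)N$, and absorbing $F(f;K,N')-F(f;K,N)=o(1)$), which is fine.

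On the sign issue, only your second option is sound: the statement should be read with $l_1m+l_2n>0$ on $\Delta_N$, which holds in every application in the paper. Flipping $(l_1,l_2)$ changes the quantity being estimated, and an even extension fails for odd $\chi$, since $f(-Q\ell+1)=f(Q\ell-1)$ concentrates near $\chi(-1)(Q\ell)^{it}\exp(F(f;K,N))$ rather than near $(Q\ell)^{it}\exp(F(f;K,N))$.
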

\begin{proof}
    Note that for large enough $K$ any $Q\in\Phi_K$ satisfies the conditions of \cref{lem:redConcentration}. Hence apply it with $a_0=1$. In particular, $\chi(a_0)=1$. Since $\mathbb{D}(f, \chi\cdot n^{it})<\infty$, we have
    $$\lim_{K\to\infty}\mathbb{D}\big(f, \chi\cdot n^{it}; K, \infty\big) + K^{-1/2} = 0.$$
    Moreover, when $Q\in\Phi_K$, note that $F_Q(f;N) = F(f;K,N)$, which does not depend on $Q$. Thus for large enough $K$ we obtain
    \begin{equation*}
        \limsup_{N\to\infty}\E_{n\in[N]} \big|f(Qn+1)-(Qn)^{it}\cdot\exp(F(f;K,N))\big|\leq \epsilon.
    \end{equation*}
    Since for any $n\leq N$ we have $\big|(Qn)^{it}\cdot \exp\big(F(f;K,N)\big)\big|\leq1$, we may use \cite[Lemma 3.1]{frantzikinakis2024partitionregularitygeneralizedpythagorean} to deduce 
    \begin{equation*}
        \limsup_{N\to\infty}\E_{m,n\in[N]} \big|f(Q(l_1m+l_2n)+1)-(Q(l_1m+l_2n))^{it}\cdot\exp(F(f;K,N))\big|\leq \epsilon,
    \end{equation*}
    Since $|\Delta_N|\gg N^2$, this concludes the proof.
\end{proof}

\subsection{Aperiodic functions}

While concentration estimates are used to handle pretentious functions, in order to handle aperiodic functions we make use of vanishing results that are known for certain 2-variable expressions.
This line of results goes back to the work of Frantzikinakis and Host in \cite{frantzikinakis2016higherorderfourieranalysis} who showed that every aperiodic multiplicative function is Gowers uniform of every order, which in turn implies that statistics involving linear terms in two variables vanish.
More recently, this was extended to allow one arbitrary irreducible quadratic term.

\begin{theorem}[{\cite[Theorem 3.1]{frantzikinakis2024partitionregularitygeneralizedpythagorean}}]\label{thm:aperiodicvanishing}
    Suppose for $s\in\N$ that the multiplicative functions $f_1,\dots,f_s,g\in\M$ are extended to even sequences in $\Z$ and that $f_1$ is aperiodic. Let $L_1,\dots,L_s$ be linear forms in two variables with integer coefficients and suppose that either $s=1$ and $L_1$ is non-trivial, or $s\geq 2$ and the forms $L_1,L_j$ are linearly independent for $j=2,\dots,s$. 
    Let also
    $$P(m,n):= \alpha m^2+\beta mn+ \gamma n^2\in\Z[m,n]$$
    be irreducible and let $Q\in\N$, $\ell,\ell'\in\N_0$. 
    Finally, let $K$ be a convex subset of $\R_+^2$ that is homogeneous, i.e. $(x,y)\in K$ implies $(kx,ky)\in K$ for every $k\in\N$. Then
    $$\lim_{N\to\infty}\E_{m,n\in[N]}\mathbf{1}_K(Qm+\ell,Qn+\ell')\cdot \prod_{j=1}^sf_j\big(L_j(Qm+\ell,Qn+\ell')\big)\cdot g\big(P(Qm+\ell,Qn+\ell')\big)=0.$$
\end{theorem}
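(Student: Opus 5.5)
\emph{Idea.} The plan is to eliminate the quadratic term $g(P(\cdot))$ by a Kátai-type (Bourgain--Sarnak--Ziegler) orthogonality argument performed in the ring of integers of the quadratic field attached to $P$. What remains will be a correlation of $f_1,\dots,f_s$ along linear forms only, and for these the Gowers uniformity of aperiodic functions \cite{frantzikinakis2016higherorderfourieranalysis} applies.

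\emph{Setup.} Since $P$ is irreducible, write $P(m,n)=\alpha\,\mathrm{N}(m-\theta n)$ up to a fixed rational factor, where $\theta$ is a root of $P(x,1)$ in $F=\mathbb{Q}(\theta)$. After passing to a finite-index sublattice, the map $(m,n)\mapsto m-\theta n$ identifies $\Z^2$ with a lattice $\Lambda\subset\mathcal{O}_F$, so that $g(P(m,n))$ becomes $G(\xi):=g(\alpha\mathrm{N}\xi)$ for $\xi\in\Lambda$. Because $g$ is completely multiplicative and unimodular, $G(\pi\xi)=g(\mathrm{N}\pi)\,G(\xi)$ for every prime element $\pi$. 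The data $Q,\ell,\ell'$ and the cone $K$ are handled first: the congruence condition $(m,n)\equiv(\ell,\ell')\bmod Q$ is expanded using additive characters modulo $Q$, or equivalently the average is restricted to a coset of $Q\Lambda$. The homogeneous convex set $K$ is approximated by a finite union of small sectors, which only costs $o(1)$ and is compatible with multiplication by prime elements up to boundary errors. So it suffices to show
\[
\E_{\xi\in\Lambda\cap R_N}\, G(\xi)\prod_{j=1}^s f_j\big(L_j(\xi)\big)\longrightarrow 0
\]
for suitable regions $R_N$ and with a twist $e(\langle \lambda,\xi\rangle/Q)$ allowed. Here each $L_j$ is regarded as a $\Z$-linear form on $\Lambda$.

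\emph{Kátai step.} Next I would apply a two-dimensional Kátai criterion over $\mathcal{O}_F$. The divergence of $\sum_{\pi}1/\mathrm{N}\pi$ over degree-one prime ideals makes the Turán--Kubilius inequality available, which fails in $\Z^2$ with gcd-dilations. It reduces the claim to showing that, for all distinct (non-associate) split primes $\pi,\pi'$ of large norm,
\[
\E_{\xi}\; G(\pi\xi)\overline{G(\pi'\xi)}\prod_{j=1}^s f_j\big(L_j(\pi\xi)\big)\overline{f_j\big(L_j(\pi'\xi)\big)}\;\longrightarrow\;0 .
\]
The point is that $G(\pi\xi)\overline{G(\pi'\xi)}=g(\mathrm{N}\pi)\overline{g(\mathrm{N}\pi')}\,|G(\xi)|^2$ is a constant, so the quadratic term disappears entirely. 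What remains is a correlation of $f_1,\overline{f_1},f_2,\dots$ along the $2s$ linear forms $\xi\mapsto L_j(\pi\xi)$ and $\xi\mapsto L_j(\pi'\xi)$ on $\Z^2$.

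\emph{Vanishing of the linear correlations.} Multiplication by $\pi$ is an invertible $\Z$-linear map of $\Lambda\otimes\mathbb{Q}$, and $\pi/\pi'\notin\mathbb{Q}$ for non-associate split primes. Hence $L_1\circ\pi$ and $L_1\circ\pi'$ are linearly independent. I still need to check that $L_1\circ\pi$ is independent of each $L_j\circ\pi$ ($j\ge2$, by hypothesis) and of each $L_j\circ\pi'$, possibly after excluding finitely many $\pi'$ for a given $\pi$. The exceptional pairs have density zero in the Kátai double sum and may be discarded. Once independence is secured, the aperiodic function $f_1$ appears along a form independent of all the others. The generalized von Neumann inequality bounds the average by a Gowers norm $\|f_1\|_{U^r}$ on progressions, which vanishes by Frantzikinakis--Host. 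This also absorbs the additive-character twist, since linear phases are killed by uniformity of order at least $2$.

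\emph{Main obstacle.} I expect the hardest part to be setting up the Kátai criterion rigorously in this two-dimensional, lattice-and-cone setting. This requires a Turán--Kubilius inequality over prime ideals of $\mathcal{O}_F$ restricted to the lattice coset and sector, together with control of the dilation $\xi\mapsto\pi\xi$ on the region $R_N$, whose shape changes under multiplication by $\pi$. I would first try to choose $R_N$ as a norm-ball intersected with a sector, so that multiplication by $\pi$ maps it approximately onto a region of the same type, and then compare the two averages via the homogeneity of $K$.
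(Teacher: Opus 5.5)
The paper gives no proof of this statement; it quotes it from \cite[Theorem 3.1]{frantzikinakis2024partitionregularitygeneralizedpythagorean}. Your plan is a K\'atai/Bourgain--Sarnak--Ziegler orthogonality criterion in the quadratic order attached to $P$, which turns $g(P(\cdot))$ into the constant $g(\mathrm{N}\pi)\overline{g(\mathrm{N}\pi')}$, followed by Frantzikinakis--Host uniformity for the remaining linear correlations; I believe this is essentially the argument behind the cited result, and it is sound.

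The independence you left open does hold. Let $\mu_x$ denote multiplication by $x$. The map $x\mapsto L_1\circ\mu_x$ is a $\mathbb{Q}$-linear bijection from $F$ onto $\mathrm{Hom}(\mathbb{Q}^2,\mathbb{Q})$. Hence $L_1\circ\mu_\pi\in\mathbb{Q}\,(L_j\circ\mu_{\pi'})$ forces $\pi'\in\mathbb{Q}^\times\pi x_j^{-1}$ for a fixed $x_j\notin\mathbb{Q}$, and that set contains at most one prime up to sign.

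One correction to the setup: take your $\pi$ to be (balanced) generators of principal degree-one primes of the multiplier ring of $\Lambda$, rather than prime elements of $\mathcal{O}_F$. This guarantees $\pi\Lambda\subset\Lambda$, which you need for $L_j(\pi\xi)$ to be an integer.
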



We only make use of \cref{thm:aperiodicvanishing} in the special case described by the following corollary.
\begin{corollary}\label{cor:aperiodicJP}
    Let $f,g,h\in\M$, at least one of them aperiodic. 
    Let $a,b,c\in\N$ be perfect squares such that either $a=c$ or $a+b=c$, and let $P_x,P_y,P_z$ be as in \eqref{eq:a=c=1_param} or \eqref{eq:a+b=c_param}, respectively. 
    Then for every $Q\in\N$,
    $$\lim_{N\to\infty}\Eh \Es f\Big(kP_x^Q(m,n)\Big)\cdot g\Big(kP_y^Q(m,n)\Big)\cdot h\Big(kP_z^Q(m,n)\Big)=0$$
\end{corollary}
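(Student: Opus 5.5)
Write $\Phi$ for the F{\o}lner sequence of \cref{def:Phi_K} and $\Psi$ for the subsequence defining $\Es$. The plan is to pull the $k$-average out of the product using complete multiplicativity:
$$f\big(kP_x^Q\big)\,g\big(kP_y^Q\big)\,h\big(kP_z^Q\big)=(fgh)(k)\cdot f\big(P_x^Q\big)\,g\big(P_y^Q\big)\,h\big(P_z^Q\big).$$
The expression to be averaged therefore becomes
$$\Es (fgh)(k)\cdot \Eh f\big(P_x^Q(m,n)\big)\,g\big(P_y^Q(m,n)\big)\,h\big(P_z^Q(m,n)\big),$$
where we use that the $k$-limit exists along $\Psi$ and that the $(m,n)$-average is a finite average for each fixed $N$. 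This splits the argument into two cases.

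\emph{Case 1: $fgh\not\equiv 1$.} By \cref{lem:folnervanish} applied to the nontrivial function $fgh\in\M$, we get $\lim_{K\to\infty}\E_{k\in\Phi_K}(fgh)(k)=0$. So $\Es(fgh)(k)=0$, and the whole expression vanishes for every $N$.

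\emph{Case 2: $fgh\equiv 1$.} Now the $k$-average equals $1$, and we must show that the $(m,n)$-average over $\Delta_N$ tends to $0$. To do this, factor the reducible forms into linear forms. In case $a=c$, write $b=\beta^2$. Then $P_x=(m-\beta n)(m+\beta n)$ and $P_y=2\sqrt a\,mn$, so
$$f(P_x)\,g(P_y)=g(2\sqrt a)\,f(m-\beta n)\,f(m+\beta n)\,g(m)\,g(n).$$
In case $a+b=c$, use the factorizations displayed after \eqref{eq:a+b=c_param}. This gives four linear forms, and the quadratic term $h(P_z^Q)$ comes from the irreducible form $P_z$. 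After the substitution $(m,n)\mapsto(Qm+1,Qn)$ we are exactly in the setting of \cref{thm:aperiodicvanishing}, with $\ell=1$, $\ell'=0$, and with the cone $K=\{m>(a+2b)n\}$, which is convex and homogeneous. The averages over $\Delta_N$ and over $[N]^2$ with $\mathbf 1_K$ differ only by the normalisation $|\Delta_N|\asymp N^2$. We also need to extend the functions evenly to $\Z$; this is harmless because on $\Delta_N$ all the linear forms are positive. The constant $g(2\sqrt a)$ and any other constants factor out.

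It remains to arrange that an aperiodic function sits on a linear form $L_1$ that is linearly independent from all the others. The linear forms ($m\pm\beta n$, $m$, $n$, or $m-bn\pm\alpha n$, $m+an\pm\gamma n$) are pairwise non-proportional. When $f$ or $g$ is aperiodic, take $L_1$ to be a form carrying that function. Pairwise independence then gives the hypothesis of \cref{thm:aperiodicvanishing}, and the theorem yields the vanishing.

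The main obstacle is the sub-case where only $h$ is aperiodic, since \cref{thm:aperiodicvanishing} requires $f_1$ to be attached to a linear form. Here the relation $fgh\equiv1$ helps: $h=\overline{fg}$. If $f$ and $g$ were both pretentious, then $h$ would be pretentious too, which is a contradiction. So at least one of $f,g$ is aperiodic after all, and we are back in the previous situation.

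There are a few details to check along the way. In the $a=c$ case, when $b$ is such that $\beta=0$ does not occur, the forms $m$, $n$, $m\pm\beta n$ are pairwise independent. In the $a+b=c$ case, $\alpha,\gamma>0$, so none of the forms degenerate. Finally, the limit in $N$ is taken of an expression that is, for each fixed $N$, a product of the $k$-limit with a finite average, so the order of limits causes no difficulty.
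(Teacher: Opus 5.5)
Your proof is correct and is essentially the paper's argument: you factor out $(fgh)(k)$, dispose of the case $fgh\not\equiv 1$ with \cref{lem:folnervanish} (this case includes the one where $h$ is the only aperiodic function), and otherwise place an aperiodic $f$ or $g$ on one of the linear factors of $P_x^Q$ or $P_y^Q$ and invoke \cref{thm:aperiodicvanishing} on the cone $m>(a+2b)n$. Splitting on whether $fgh\equiv 1$, rather than on which function is aperiodic, is only a reorganisation of the same two ingredients, and your checks of pairwise independence and positivity of the linear forms fill in details the paper leaves implicit.
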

\begin{proof}
If $h$ is the only aperiodic function, then the product $fgh$ is not the identity, and hence the average in $k$ vanishes by \cref{lem:folnervanish}.

    Otherwise, either $f$ or $g$ are aperiodic, and since $P_x$ and $P_y$ factor into linear terms, the conclusion follows from \cref{thm:aperiodicvanishing} with $K=\{(x,y)\in\R^2:x>(a2+b)y>0\}$.
\end{proof}

\section{Auxiliary results}

\subsection{Structural lemma}\label{subsec:structuralresult}
Let $a,b,c\in\N$ be perfect squares such that either $a=c$ or $a+b=c$.


The following lemma will allow us to reduce \cref{thm_jpforabc} to the case where the involved functions have additional properties. 
\begin{lemma}\label{lem:subgroupGenJP}
    Let $f_1,\dots,f_d\in\M$ be JP w.r.t. $a,b,c$. 
    Denote by $\langle f_1,\dots,f_d\rangle$ the subgroup of $\M$ (under pointwise multiplication) generated by them. 
    Then any $g_1,\dots,g_\ell\in\langle f_1,\dots,f_d\rangle$ are JP w.r.t. $a,b,c$. 
\end{lemma}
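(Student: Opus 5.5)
The idea is to show that if every value $f_j(\cdot)$ lies in a small neighbourhood $J$ of $1$, then every $g_i(\cdot)$ lies in a prescribed neighbourhood of $1$. We then compare the JP average for the $g_i$ with the JP average for the $f_j$, using an auxiliary function $\phi$ concentrated near $1$.

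\emph{Step 1: neighbourhoods of $1$ are controlled by the generators.} Each $g_i$ can be written as $g_i=\prod_{j=1}^d f_j^{e_{ij}}$ with $e_{ij}\in\Z$. Put $E:=\max_i\sum_j|e_{ij}|$, and fix $\psi:\S^1\to[0,1]$ continuous with $\psi(1)>0$. By continuity there is $\delta>0$ such that $\psi(w)\geq \psi(1)/2$ whenever $|w-1|<E\delta$.

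For $z_1,\dots,z_d\in\S^1$ with $|z_j-1|<\delta$ we have, by the triangle inequality and $|uv-1|\le|u-1|+|v-1|$ on $\S^1$,
$$\Big|\prod_j z_j^{e_{ij}}-1\Big|\leq\sum_j|e_{ij}||z_j-1|<E\delta.$$
Hence, whenever $|f_j(u)-1|<\delta$ for all $j$, we get $\psi(g_i(u))\geq\psi(1)/2$ for all $i$.

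\emph{Step 2: pointwise comparison of the integrands.} Choose a continuous $\phi:\S^1\to[0,1]$ with $\phi(1)=1$ and $\phi$ supported in $\{w:|w-1|<\delta\}$. Fix $(m,n)$ and $k$, and write $u$ for any of $kP_x(m,n)$, $kP_y(m,n)$, $kP_z(m,n)$. If $\prod_j\phi(f_j(u))\neq0$ for all three choices of $u$, then each $f_j(u)$ is within $\delta$ of $1$. By Step 1, each of the $3\ell$ factors $\psi(g_i(u))$ is then at least $\psi(1)/2$. Since every $\phi\le1$, this gives the pointwise inequality
$$\prod_{i=1}^\ell\prod_{u}\psi(g_i(u))\;\geq\;(\psi(1)/2)^{3\ell}\prod_{j=1}^d\prod_{u}\phi(f_j(u)).$$
It also holds trivially when the right side vanishes.

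\emph{Step 3: averaging.} Apply $\Es$ and then $\Eh$ to both sides of the pointwise inequality. Both operations are positive linear functionals, so the inequality is preserved; with the ultrafilter formulation of $\Es$, monotonicity holds even if the two limits are taken along the same subsequence. Now take $\limsup_{N\to\infty}$. The right-hand side is $(\psi(1)/2)^{3\ell}$ times the JP quantity for $f_1,\dots,f_d$ with test function $\phi$. That quantity is positive because $f_1,\dots,f_d$ are JP and $\phi(1)>0$. Therefore \eqref{eq_jpdef} holds for $g_1,\dots,g_\ell$ with test function $\psi$.

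I expect no real obstacle here. The only points needing care are the uniform choice of $\delta$ via the bounded exponent sum $E$, and the fact that one subsequence defining $\Es$ serves both sides.
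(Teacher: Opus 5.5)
Your proof is correct and follows the same route as the paper. The paper packages the exponents into a continuous homomorphism $T:(\S^1)^{3d}\to(\S^1)^{3\ell}$ and asserts that there is a continuous $\tilde\psi$ with $\tilde\psi(1)>0$ and $\tilde\psi^{\otimes 3d}\le\psi^{\otimes 3\ell}\circ T$; your Steps 1--2 construct exactly such a $\tilde\psi$ explicitly, as a constant multiple of your $\phi$. One cosmetic point: if $E=0$ your choice of $\delta$ degenerates, but then all $g_i\equiv 1$ and the inequality is trivial, or you can simply replace $E$ by $\max(E,1)$.
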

\begin{proof}
    Our goal is to show that for every continuous function $\psi:\S^1\to[0,1]$ with $\psi(1)>0$,
    \begin{equation}\label{eq_proof_lem:subgroupGenJP0}
        \limsup_{N\to\infty}\Eh \Es\prod_{j=1}^\ell \psi\Big(g_j\big(kP_x(m,n)\big)\Big)\cdot \psi\Big(g_j\big(kP_y(m,n)\big)\Big)
        \cdot \psi\Big(g_j\big(kP_z(m,n)\big)\Big) >0.
    \end{equation}
    For each $j\in\{1,\dots,\ell\}$ we have
    $$g_j = \prod_{s=1}^d f_s^{w_{s,j}}\text{, for some }w_{s,j}\in\Z,$$
    The coefficients $w_{s,j}$ induce a group homomorphism $T:(\S^1)^{3d}\to(\S^1)^{3\ell}$ given by
    $$T\big((z_{s,i})_{s\leq d,i\leq 3}\big)=\Bigg(\prod_{s=1}^dz_{s,1}^{w_{s,j}},\prod_{s=1}^dz_{s,2}^{w_{s,j}},\prod_{s=1}^dz_{s,3}^{w_{s,j}}\Bigg)_{j\leq\ell}.$$
    Observe that
    \begin{equation}\label{eq_proof_lem:subgroupGenJP1}
        T\big(f_s(kP_w(m,n))_{s\leq d,w\in\{x,y,z\}}\big)= \Big(g_j(kP_x(m,n)),g_j(kP_y(m,n)),g_j(kP_z(m,n))\Big)_{j\leq\ell}.
    \end{equation}
    Define $\phi:(\S^1)^{3\ell}\to[0,1]$ as
    \begin{equation}\label{eq_proof_lem:subgroupGenJP2}
        \phi\big((z_{j,i})_{j\leq\ell, i\leq 3}\big)=\prod_{j=1}^\ell \psi(z_{j,1})\psi(z_{j,2})\psi(z_{j,3}).
    \end{equation}
    By continuity of $T$, $\phi\circ T$ is a continuous function on $(\S^1)^{3d}$ satisfying $(\phi\circ T)(1,\dots,1)>0$. 
    Therefore there exists a continuous function $\tilde\psi:\S^1\to[0,1]$
    such that $\tilde\psi(1)>0$ and\footnote{Here and in the rest of the article, given a function $f:X\to\C$ and $n\in\N$ we denote by $f^{\otimes n}:X^n\to\C$ the product function $f^{\otimes n}(x_1,\dots,x_n)=f(x_1)\cdots f(x_n)$.} $\tilde\psi^{\otimes 3d}\leq\phi\circ T$.
    
    Using the assumption that $f_1,\dots,f_d\in\M$ are JP, it follows that
    \begin{equation*}
        \limsup_{N\to\infty}\Eh\Es \prod_{s=1}^d \tilde\psi\Big(f_s\big(kP_x(m,n)\big)\Big)\cdot \tilde\psi\Big(f_s\big(kP_y(m,n)\big)\Big)
        \cdot \tilde\psi\Big(f_s\big(kP_z(m,n)\big)\Big) >0.
    \end{equation*}
    Combining this with \eqref{eq_proof_lem:subgroupGenJP1} and \eqref{eq_proof_lem:subgroupGenJP2} and the inequality $\tilde\psi^{\otimes 3d}\leq\phi\circ T$, we obtain \eqref{eq_proof_lem:subgroupGenJP0}, finishing the proof.
\end{proof}


The next lemma describes a useful class of generating sets for any given tuple in $\M$. When combined with \cref{lem:subgroupGenJP} it reduces \cref{thm_jpforabc} to a  more manageable special case.

Throughout the rest of the paper we use the following convenient abuse of notation: whenever $f=(f_1,\dots,f_d)\in\M^d$ and $s=(s_1,\dots,s_d)\in\Z^d$ we denote $f_1^{s_1}\dots f_d^{s_d}$ by $f^s$. 
Similarly, when $u=(u_1,\dots,u_d)\in\C^d$ we define $u^s:=u_1^{s_1}\dots u_d^{s_d}$.

\begin{lemma}\label{lem:structuralResult}
    Let $F\subset\M$ be a finite set. Then there exist $g_1,\dots,g_d,h_1,\dots,h_d\in\M$ such that 
    \begin{enumerate}
        \item $F\subset\langle g_1,\dots,g_d,h_1,\dots,h_d\rangle$;
        \item For each $j\in\{1,\dots,d\}$, $h_j\sim n^{it_j}$ for some $t_j\in\R$; 
        \item For every $s\in\Z^d$, the function $g^s$ is either aperiodic or a Dirichlet character.
    \end{enumerate}
\end{lemma}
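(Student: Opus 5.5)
The plan is to pass to the quotient of $\M$ by the functions that pretend to be some $n^{it}$. There the pretentious classes are exactly those of Dirichlet characters. We then build a homomorphic section of the finitely generated image of $F$ that sends pretentious classes exactly to Dirichlet characters. The $g_j$ will be images of generators under this section, and the $h_j$ will be the discrepancies between the elements of $F$ and their lifts.

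\emph{Setup.} Let $\mathcal N:=\{h\in\M: h\sim n^{it}\text{ for some }t\in\R\}$. By the triangle inequality for $\D$ this is a subgroup of $\M$. Let $\mathcal D\subset\M$ denote the group of Dirichlet characters. We have $\mathcal D\cap\mathcal N=\{1\}$: a character $\chi\sim n^{it}$ forces $\chi$ principal and $t=0$, and the (modified) principal character is $\equiv1$. So $\mathcal D$ embeds into $\M/\mathcal N$, and a class in $\M/\mathcal N$ consists of pretentious functions if and only if it lies in $\bar{\mathcal D}:=\mathcal D\mathcal N/\mathcal N$, in which case it contains a unique Dirichlet character.

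Let $\bar G$ be the image of $\langle F\rangle$ in $\M/\mathcal N$. It is a finitely generated abelian group, so $\bar G=\bar G_{\mathrm{free}}\oplus \mathrm{Tor}$ with $\mathrm{Tor}$ finite. Put $\bar T:=\bar G\cap\bar{\mathcal D}$. Since $\mathcal D$ is a torsion group, $\bar T\subset\mathrm{Tor}$.

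\emph{Key step: a section $\sigma:\bar G\to\M$ with $\pi\circ\sigma=\mathrm{id}$ and $\sigma(\bar t)\in\mathcal D$ for every $\bar t\in\bar T$.}
\begin{enumerate}
\item On a cyclic factor $\langle\bar u\rangle$ of $\mathrm{Tor}$ of order $k$, take any lift $u$. Then $u^k=h\in\mathcal N$, say $h\sim n^{it}$. Define $w\in\M$ primewise by choosing $w(p)$ to be a $k$-th root of $h(p)$ as close as possible to $p^{it/k}$. Since $|w(p)-p^{it/k}|\ll|h(p)-p^{it}|$, we get $w\sim n^{it/k}$, and $u w^{-1}$ is a lift of exact order $k$. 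This gives a homomorphic lift $\sigma_0$ of $\mathrm{Tor}$.
\item The map $\bar t\mapsto\sigma_0(\bar t)\chi_{\bar t}^{-1}$ on $\bar T$ (with $\chi_{\bar t}$ the unique character in $\bar t$) is a homomorphism into $\mathcal N_0:=\{h\in\mathcal N: h\text{ of finite order}\}$. Here $t=0$ automatically, since $h^k=1$ and $h\sim n^{it}$ force $t=0$, so $\mathcal N_0$ consists of finite-order $h\sim1$.
\item I claim $\mathcal N_0$ is divisible. Given $h\in\mathcal N_0$ and $k$, take $w(p)$ to be the $k$-th root of $h(p)$ closest to $1$; then $w$ has finite order and $|w(p)-1|\le|h(p)-1|$, so $w\sim1$. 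Hence $\mathcal N_0$ is an injective $\Z$-module.
\item By injectivity this homomorphism extends to $\mathrm{Tor}$. Dividing $\sigma_0$ by the extension gives a lift $\sigma$ of $\mathrm{Tor}$ that equals $\chi_{\bar t}$ on $\bar T$.
\item On $\bar G_{\mathrm{free}}$, choose arbitrary lifts of a basis; this extends $\sigma$ homomorphically to all of $\bar G$.
\end{enumerate}

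\emph{Conclusion.} Let $g_1,\dots,g_d$ be the images under $\sigma$ of a generating set of $\bar G$ (basis of the free part together with cyclic generators of $\mathrm{Tor}$), padding with $g_j\equiv1$ if needed. For each $f\in F$ write $f=\sigma(\bar f)\cdot h_f$ with $h_f\in\mathcal N$, and let the $h_j$ be these $h_f$, padded with $1$'s. Then (1) and (2) hold. For (3), suppose $g^s$ is pretentious. Its class lies in $\bar T$, and since $\sigma$ is a homomorphism, $g^s=\sigma(\bar g^s)=\chi_{\bar g^s}\in\mathcal D$. Otherwise $g^s$ is aperiodic by the Daboussi--Delange lemma.

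I expect the main obstacle to be the key step: making pretentious combinations of the $g_j$ be \emph{exactly} Dirichlet characters rather than merely pretending to be one. This is where divisibility of $\mathcal N_0$, via primewise choice of roots, does the work.
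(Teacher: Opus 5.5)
Your argument is correct, up to one caveat that the paper's proof shares (second paragraph). It is organized differently from the paper's.

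\textbf{Comparison with the paper.} The paper never passes to $\M/\mathcal{N}$. Writing $F=\{f_1,\dots,f_\ell\}$, it works on the exponent lattice $\Z^\ell$ and proceeds as follows.
\begin{enumerate}
\item It takes the subgroup $P=\{s: f^s\text{ pretentious}\}$.
\item It defines the correction $\phi(s)=f^s\overline{\chi_s}\in\mathcal{N}$ on $P$.
\item It extends $\phi$ to $\tilde\phi:\Z^\ell\to\mathcal{N}$ using divisibility (hence injectivity) of $\mathcal{N}$.
\item It sets $h_i=\tilde\phi(e_i)$ and $g_i=f_ih_i^{-1}$, so that $g^s=f^s\tilde\phi(s)^{-1}$ equals $\chi_s$ on $P$ and is aperiodic off $P$.
\end{enumerate}
Because $\Z^\ell$ is free, there is nothing to lift. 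Your free/torsion decomposition and your Step 1 (lifting cyclic factors of $\mathrm{Tor}$ to elements of the same order) disappear, injectivity is used only once, and one gets $d=|F|$ with each $g_i$ attached to $f_i$.

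Your Step 1 is in effect a proof that $\mathcal{N}$ itself is divisible, which is the paper's key fact. You could therefore use injectivity of $\mathcal{N}$ twice directly: once to split $\pi^{-1}(\bar G)\to\bar G$ in one stroke, and once to extend the correction from $\bar T$. That would make $\mathcal{N}_0$ and Step 3 unnecessary.

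What your version gives in return is an explicit homomorphic section $\sigma$ of $\bar G$. This is a complement $\sigma(\bar G)$ to $\mathcal{N}$ in $\langle F\rangle\mathcal{N}$ whose pretentious elements are exactly characters, and it makes visible that the only real issue is torsion in $\M/\mathcal{N}$. The paper's $g_i$ also generate such a complement, but this is less transparent there.

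\textbf{A caveat shared with the paper.} The paper asserts a ``unique Dirichlet character $\chi_s$'' and that $\phi$ is a homomorphism. You assert that $\mathcal{D}$ is a group and that each class contains a unique character. Neither is literally true for the paper's modified characters. For instance, the nontrivial character mod $3$ and the character mod $15$ it induces differ only at $p=5$. Their quotient is $-1$ at $5$ and $1$ at every other prime. It lies in $\mathcal{N}$ but is not a Dirichlet character, so $\mathcal{D}$ is not closed under products and a class in $\bar{\mathcal{D}}$ can contain several characters.

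The fix is to choose one modulus $q_0$ divisible by the conductors of the finitely many classes in $\bar T$. Then let $\mathcal{D}$ be the modified characters mod $q_0$. This is a finite group, it meets $\mathcal{N}$ trivially, and it contains exactly one element of each class of $\bar T$. With this convention your Steps 2 and 4 and your verification of (3) go through verbatim.
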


\begin{proof}
    Let $F=\{f_1,\dots,f_\ell\}$.
    Since the product of two pretentious functions is pretentious, the set $P:=\{s\in\Z^\ell:f^s\text{ is pretentious}\}$ is a subgroup of $\Z^\ell$. Similarly, the set ${\mathcal A}:=\big\{f\in\M:\exists t\in\R: f\sim n^{it}\big\}$ is also a subgroup.

    For each $s\in P$ there exists a unique Dirichlet character $\chi_s$ such that $f^s\overline{\chi_s}\in{\mathcal A}$.
    Define the map $\phi:P\to\mathcal{A}$ such that
    $$\phi(s)=f^s\overline{\chi_s}\text{ so that }f^s=\phi(s)\cdot\chi_s.$$
    Since for each $r,s\in P$ we have $f^rf^s=f^{r+s}=\phi(r+s)\cdot\chi_r\cdot\chi_s$, it follows that $\phi$ is a homomorphism.

    The subgroup of archimedean characters and the subgroup of completely multiplicative functions that pretend to be the trivial function $\mathbf{1}$ are both divisible\footnote{A group $G$ is called \emph{divisible} if for every $g\in G$ and every $n\in\N$ there exists $h\in G$ such that $h^n=g$.}. Hence the abelian subgroup $\mathcal{A}$ is divisible. Equivalently, $\mathcal{A}$ is injective in the category of Abelian groups (see \cite[Ch. 2.3]{weibel1994introduction}). Thus $\phi$ extends to a homomorphism $\tilde{\phi}:\Z^\ell\to{\mathcal A}$.

    We now let $h_i=\tilde{\phi}(e_i)$, where $e_i\in\Z^\ell$ is the $i$-th element of the canonical basis, and $g_i=f_ih_i^{-1}$.
    The first two conditions hold automatically. To verify the third condition, observe that $g^s=f^s\cdot\big(\tilde\phi(s)\big)^{-1}$ is a Dirichlet character when $s\in P$ and it is aperiodic otherwise.
\end{proof}

\subsection{Density lemma}\label{subsec:densityLemma}

The goal of this section is to specify a region $W\subset\N^2$ with positive additive density and with the property, roughly speaking, that given any $(m,n)\in W$ the points $\big(P_x(m,n)\big)^{it},\big(P_y(m,n)\big)^{it},\big(P_z(m,n)\big)^{it}$ are all very close to each other. This will be useful to handle Archimedean characters in the proof of \cref{thm_jpforabc}.

It is only in this section that we use the notion of logarithmic averages,
$$\E_{n\in[N]}^{\log}\phi(n):= \frac{1}{\log N}\sum_{n\in[N]}\frac1n\phi(n),$$
where $N\in\N$ and $\phi: [N]\to\C$. 
Given a vector $v=(v_1,\dots,v_d)\in\T^d$ or $v\in\R^d$, we denote by
$$\|v\|_{\T^d}:=\max_{1\leq i\leq d}\|v_i\|_\T,$$
where for $x\in\R$ we denote by $\|x\|_\T:=\min_{n\in\Z}|x-n|$ the distance to its closest integer.

As usual, for $x\in\T$ we denote by $e(x):=e^{2\pi i x}$.
\begin{lemma}\label{lemma_logweyl}
    Let $d\in\N$ and $(x_n)_{n\in\N}$ be a sequence in $\T^d$.
    Suppose that for every $v\in\Z^d$ the logarithmic averages of the inner product
    $$\E^{\log}_{n\in[N]}e\big(\langle v,x_n\rangle\big)$$
    tend to either $0$ or $1$ when $N\to\infty$.
    Then for every $\epsilon>0$ there exists (a positive logarithmic density set of) $n\in\N$ such that $\|x_n\|_{\T^d}<\epsilon$.
\end{lemma}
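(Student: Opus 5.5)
Let $V:=\{v\in\Z^d: \E^{\log}_{n\in[N]}e(\langle v,x_n\rangle)\to1\}$; the averages for all other $v$ tend to $0$. The idea is to view the logarithmic averages of $e(\langle v,x_n\rangle)$ as the Fourier coefficients of a limiting measure on $\T^d$, and to use a nonnegative trigonometric polynomial concentrated near $0$ as a test function.

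First, I claim that for $v\in V$ one has $e(\langle v,x_n\rangle)$ close to $1$ for most $n$ in logarithmic density. Indeed, $\E^{\log}|e(\langle v,x_n\rangle)-1|^2 = 2-2\Re\E^{\log}e(\langle v,x_n\rangle)\to0$. In particular $V$ is a subgroup of $\Z^d$. Closure under negation is clear by conjugation. For sums, $|e(\langle v+w,x_n\rangle)-1|\le|e(\langle v,x_n\rangle)-1|+|e(\langle w,x_n\rangle)-1|$, and both terms have vanishing log-average. (The subgroup property is not strictly needed below, but it clarifies the structure.)

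Next, I pick a continuous $\phi:\T^d\to[0,\infty)$ supported in $\{\|y\|_{\T^d}<\epsilon\}$ with $\phi(0)>0$, chosen so that its Fourier coefficients are nonnegative and absolutely summable. A concrete choice is $\phi=\eta*\tilde\eta$ with $\tilde\eta(y)=\eta(-y)$, where $\eta\ge0$ is smooth and supported in the $\epsilon/2$-ball. Then $\hat\phi(v)=|\hat\eta(v)|^2\ge0$, the coefficients are summable by smoothness, and $\phi(0)=\|\eta\|_2^2>0$.

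Now expand $\phi(x_n)=\sum_v\hat\phi(v)e(\langle v,x_n\rangle)$. By absolute summability I may exchange the sum with the logarithmic average and pass to the limit term by term (dominated convergence). This gives
$$\lim_{N\to\infty}\E^{\log}_{n\in[N]}\phi(x_n)=\sum_{v\in V}\hat\phi(v)\ge\hat\phi(0)=\int\phi>0,$$
using nonnegativity of the coefficients and $0\in V$. Since $\phi\le\|\phi\|_\infty\mathbf 1_{\{\|y\|<\epsilon\}}$, the set of $n$ with $\|x_n\|_{\T^d}<\epsilon$ has lower logarithmic density at least $\int\phi/\|\phi\|_\infty>0$, which is the claim.

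The main point requiring care is the choice of $\phi$. A test function with nonnegative Fourier coefficients is what makes the dichotomy hypothesis usable without any further structure: each frequency contributes either $\hat\phi(v)$ or $0$, so no cancellation can occur. Verifying absolute summability, to justify interchanging the limit and the infinite sum, is then routine for smooth $\eta$.
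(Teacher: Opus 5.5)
Your proof is correct, but it uses a different mechanism from the paper's.

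\textbf{The paper's route.} The paper forms the measures $\mu_N=\E^{\log}_{n\in[N]}\delta_{x_n}$ and reads the hypothesis as pointwise convergence $\widehat{\mu_N}\to\mathbf{1}_V$. It then uses Weyl's criterion to identify the weak-$*$ limit as Haar measure on the closed subgroup $H=V^\perp$, and positivity follows from $0\in H$. That identification tacitly needs $V$ to be a subgroup of $\Z^d$. Otherwise $\mathbf{1}_V$ is not the Fourier transform of Haar measure on $V^\perp$, which is $\mathbf{1}_{\langle V\rangle}$. The paper does not verify this, and your second paragraph supplies exactly that check.

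\textbf{Your route.} You make the subgroup property unnecessary. Because $\widehat\phi=|\widehat\eta|^2\ge0$ is summable, each frequency contributes either $\widehat\phi(v)$ or $0$ to the limit. Hence $\lim_N\E^{\log}_{n\in[N]}\phi(x_n)\ge\widehat\phi(0)>0$, with no structural information about $V$ and no appeal to uniqueness of measures with prescribed Fourier coefficients.

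\textbf{What each buys.} Your argument is elementary and quantitative: it gives lower logarithmic density at least $(\int\eta)^2/\|\eta\|_2^2$. It also survives weaker hypotheses. For example, it suffices that $\liminf_N\Re\,\E^{\log}_{n\in[N]}e(\langle v,x_n\rangle)\ge0$ for $v\ne0$, by Fatou's lemma applied to the nonnegative weights $\widehat\phi(v)$. The paper's route instead gives the full limiting distribution of $(x_n)$, namely logarithmic equidistribution on $V^\perp$, which is more than the lemma requires.

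\textbf{A small simplification.} Smoothness of $\eta$ is not needed. Parseval already gives $\sum_v\widehat\phi(v)=\|\eta\|_2^2<\infty$, and $\phi=\eta*\tilde\eta$ is continuous for any $\eta\in L^2$.
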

\begin{proof}
    For each $N\in\N$ define the probability measures
    $$\mu_N:=\E_{n\in[N]}^{\log}\delta_{x_n}.$$
    Their Fourier transforms are
    $$\widehat{\mu_N}(v) = \int_{\T^d} e\big(\langle v,x\rangle\big)\d\mu_N =\E_{n\in[N]}^{\log}\int_{\T^d}e\big(\langle v,x\rangle\big)\d\delta_{x_n} = \E_{n\in[N]}^{\log}e\big(\langle v,x_n\rangle\big).$$

    Let $H:=\{x\in\T^d:\chi_v(x)=1,\,\forall v\in\Z^d\quad\widehat{\mu_N}(v)\to 1\}$ and note that $H$ is a compact subgroup of $\T^d$. In view of Weyl's criterion (see, e.g. \cite[Proposition 2.4]{Bergelson_2020}) and the hypothesis, it follows that $\mu_N\to\mu$ where $\mu$ is the Haar measure on $H$. 

    Let $\psi:\T^d\to[0,1]$ be a continuous function with $\psi(\mathbf{0})=1$ and $\psi(x)=0$ whenever $\|x\|_{\T^d}>\epsilon$.
    Since $\mathbf{0}\in H$, $\int\psi\d\mu>0$ and from $\mu_N\to\mu$, we conclude that
    $$\E_{n\in[N]}^{\log}\psi(x_n)>0$$
    for all sufficiently large $N$. 
    In particular, $\|x_n\|_{\T^d}<\epsilon$ for some (indeed, a positive logarithmic density set of) $n\in\N$.
\end{proof}

\begin{lemma}\label{lemma:logarithmicuniformdistribution}
    Let $p_1,p_2,p_3\in\Z[x]$
    satisfy
    \begin{equation}\label{eq:conditiontointersectdiagonal0}
       \forall v_1,v_2\in\R,\qquad \lim_{m\to\infty}|p_1(m)|^{v_1}|p_2(m)|^{v_2}|p_3(m)|^{-v_1-v_2}\in\{0,1,\infty\}.
    \end{equation}
    Then for every $\epsilon>0$ and $t_1,\dots,t_d\in\R$ there exist infinitely many $m\in\N$ such that 
    \begin{equation}\label{eq_nearlogspolynomials}
        \forall j\in\{1,\dots,d\}\qquad\forall \ell\in\{1,2\}\qquad \big\|t_j\log |p_\ell(m)|-t_j\log |p_3(m)|\big\|_\T<\epsilon.
    \end{equation}
\end{lemma}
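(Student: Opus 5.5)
The plan is to apply \cref{lemma_logweyl} in dimension $2d$ to the sequence $x_m\in\T^{2d}$ whose $(j,\ell)$ coordinate is $t_j\log|p_\ell(m)|-t_j\log|p_3(m)| \bmod 1$, for $j\le d$ and $\ell\in\{1,2\}$. We may assume none of $p_1,p_2,p_3$ is the zero polynomial (otherwise the expressions are not defined). Then only finitely many $m$ are zeros of some $p_\ell$, and we discard them; this does not affect logarithmic averages. Once the hypothesis of \cref{lemma_logweyl} is verified, it gives a set of positive logarithmic density of $m$ with $\|x_m\|_{\T^{2d}}<\epsilon$. Such a set is infinite, and this is exactly \eqref{eq_nearlogspolynomials}.

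To check the hypothesis, fix $v=(v_{j,\ell})\in\Z^{2d}$ and set $u_\ell:=2\pi\sum_j v_{j,\ell}t_j\in\R$. Then
$$e(\langle v,x_m\rangle)=|p_1(m)|^{iu_1}\,|p_2(m)|^{iu_2}\,|p_3(m)|^{-i(u_1+u_2)}.$$
Write $p_\ell(m)=c_\ell m^{D_\ell}(1+O(1/m))$ with leading coefficient $c_\ell\neq0$ and degree $D_\ell$. Then $|p_\ell(m)|^{iu}=|c_\ell|^{iu}m^{iuD_\ell}(1+o(1))$, since $|(1+O(1/m))^{iu}-1|\to0$. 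Hence
$$e(\langle v,x_m\rangle)=\kappa\, m^{i\theta}+o(1),\qquad \kappa:=|c_1|^{iu_1}|c_2|^{iu_2}|c_3|^{-i(u_1+u_2)},\quad \theta:=u_1D_1+u_2D_2-(u_1+u_2)D_3 .$$
The $o(1)$ error contributes nothing to logarithmic averages, so there are two cases.

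If $\theta\neq0$, a standard partial summation gives
$$\frac1{\log N}\sum_{m\le N}m^{i\theta-1}=\frac{N^{i\theta}-1}{i\theta\log N}+O\Big(\frac1{\log N}\Big)\to0,$$
so the logarithmic average tends to $0$.

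If $\theta=0$, the average tends to the unimodular constant $\kappa$, and we must show $\kappa=1$; I expect this to be the only delicate point. Here we use hypothesis \eqref{eq:conditiontointersectdiagonal0} with the real exponents $v_1=u_1$ and $v_2=u_2$. By the same asymptotics,
$$|p_1(m)|^{u_1}|p_2(m)|^{u_2}|p_3(m)|^{-u_1-u_2}\to |c_1|^{u_1}|c_2|^{u_2}|c_3|^{-u_1-u_2}\,m^{\theta}=|c_1|^{u_1}|c_2|^{u_2}|c_3|^{-u_1-u_2},$$
which is a finite positive number. The hypothesis says this limit lies in $\{0,1,\infty\}$, so it equals $1$. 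Equivalently, $u_1\log|c_1|+u_2\log|c_2|-(u_1+u_2)\log|c_3|=0$, and exponentiating $i$ times this quantity gives $\kappa=1$.

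Thus every Fourier coefficient of the logarithmic empirical measures tends to $0$ or $1$, and \cref{lemma_logweyl} applies, completing the proof.
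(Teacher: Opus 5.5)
Your proposal is correct and is essentially the paper's proof. Both apply \cref{lemma_logweyl} to the $\T^{2d}$-valued sequence of log-ratios and split on whether the degree combination $\theta=u_1D_1+u_2D_2-(u_1+u_2)D_3$ vanishes: the hypothesis forces the limiting constant to be $1$ when $\theta=0$, and logarithmic equidistribution of $\theta\log m$ kills the average otherwise. Your version is somewhat cleaner than the paper's in its bookkeeping, namely the explicit leading coefficients $c_\ell$, the $2\pi$ normalization, and discarding the zeros of the $p_\ell$.
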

\begin{proof}
    Consider the sequence 
    $$\phi:m\mapsto \left(t_1\log\left|\frac{p_1(m)}{p_3(m)}\right|,t_1\log\left|\frac{p_2(m)}{p_3(m)}\right|,\dots,t_d\log\left|\frac{p_1(m)}{p_3(m)}\right|,t_d\log\left|\frac{p_2(m)}{p_3(m)}\right|\right)\in \T^{2d}.$$
    Note that \eqref{eq_nearlogspolynomials} is equivalent to $\|\phi(m)\|_{\T^{2d}}<\epsilon$. 
    In view of \cref{lemma_logweyl} it suffices to prove that, for every $u\in\Z^{2d}$,
    $$\lim_{N\to\infty}\E^{\log}_{m\in[N]}e\big(\langle u,\phi(m)\rangle\big)\in\{0,1\}.$$
    The inner product $\langle u,\phi(m)\rangle$ is of the form $\log|p_1(m)|^{v_1}|p_2(m)|^{v_2}|p_3(m)|^{-v_1-v_2}$ for some $v_1,v_2\in\R$. We now consider 2 cases

    Case 1: $v_1\deg(p_1)+v_2\deg(p_2)=(v_1+v_2)\deg(p_3)$. 
    
    In this case $\lim_{m\to\infty}|p_1(m)|^{v_1}|p_2(m)|^{v_2}|p_3(m)|^{-v_1-v_2}\in(0,\infty)$, and hence \eqref{eq:conditiontointersectdiagonal0} implies that the limit must be $1$. 
    Therefore $\langle u,\phi(m)\rangle\to0$ as $m\to\infty$ and hence $\lim_{N\to\infty}\E^{\log}_{m\in[N]}e\big(\langle u,\phi(m)\rangle\big)=1.$

    Case 2: $v_1\deg(p_1)+v_2\deg(p_2)\neq(v_1+v_2)\deg(p_3)$.
    
    Then $|\langle u,\phi(m)\rangle-v_3\log(\eta m)|\to0$ for $\eta=\lim_{m\to\infty}\frac1{m^{v_3}}|p_1(m)|^{v_1}|p_2(m)|^{v_2}|p_3(m)|^{-v_1-v_2}\neq0$.
    Since the sequence $m\mapsto v_3\log(\eta m)$ is uniformly distributed modulo $1$ with respect to logarithmic averages, we conclude in this case that
    $$\lim_{N\to\infty}\E^{\log}_{m\in[N]}e(\langle u,\phi(m)\rangle)=0.$$
\end{proof}

\begin{lemma}\label{lem:weightforpolynomials}
    Fix $\epsilon>0$, $t_1,\dots,t_d\in\R$ and let $p_1,p_2,p_3\in\Z[m,n]$ be homogeneous with degree $2$.
    Assume that 
    \begin{equation}\label{eq:conditiontointersectdiagonalold}
        \forall v_1,v_2\in\R,\qquad\lim_{m\to\infty}p_1(m,1)^{v_1}p_2(m,1)^{v_2}p_3(m,1)^{-v_1-v_2}\in\{0,1,\pm\infty\}.
    \end{equation}
    Then for any $0<\theta<1$, the set
    \begin{equation}\label{eq_definitionAm}
        A_m:=\bigcap_{j=1}^d\Big\{n\leq\theta m:\big|p_3(m,n)^{it_j}-p_2(m,n)^{it_j}\big|+\big|p_3(m,n)^{it_j}-p_1(m,n)^{it_j}\big|<\epsilon\Big\}
    \end{equation}
    satisfies $\liminf_{m\to\infty}|A_m|/m>0$.
\end{lemma}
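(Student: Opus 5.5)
By homogeneity, $p_\ell(m,n)=n^2p_\ell(m/n,1)$, so $p_3(m,n)^{it_j}/p_\ell(m,n)^{it_j}$ depends only on the ratio $r=m/n$. Writing $q_\ell(x)=p_\ell(x,1)$, the condition defining $A_m$ is therefore a condition on $r$ only, namely a bound on
\[
\big|e^{it_j(\log q_3(r)-\log q_2(r))}-1\big|+\big|e^{it_j(\log q_3(r)-\log q_1(r))}-1\big| .
\]
(Here we fix a branch of the complex powers; they are continuous in $r$ on intervals where the $q_\ell$ have no zeros or sign changes.) The plan has three steps: find one good integer ratio $r_0$, spread it to an interval of ratios by continuity, and count.

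\textbf{Step 1: a good integer ratio.} Apply \cref{lemma:logarithmicuniformdistribution} to the one-variable polynomials $q_1,q_2,q_3$, with the $t_j$ rescaled by $1/2\pi$. Hypothesis \eqref{eq:conditiontointersectdiagonal0} follows from \eqref{eq:conditiontointersectdiagonalold} after taking absolute values. This gives infinitely many $r_0\in\N$ such that
\[
\|t_j(\log|q_\ell(r_0)|-\log|q_3(r_0)|)/2\pi\|_\T<\epsilon'
\]
for all $j$ and $\ell\in\{1,2\}$. Since there are infinitely many such $r_0$, I take $r_0$ larger than $1/\theta$ and larger than every real root of the $q_\ell$. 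Then the $q_\ell$ have constant sign on $[r_0,\infty)$.

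\textbf{Handling signs.} When $q_\ell(r)<0$, the power $p_\ell^{it}$ acquires an extra factor $e^{\mp\pi t}$ (the modulus changes). I need to check that this is harmless. If the intended convention is $p^{it}:=|p|^{it}$, it is. Otherwise I note that the limit condition excludes this problem: a mismatched sign makes the $\pm\infty$ case inconsistent with the limit $1$ in the equal-degree case. In practice, for the parametrizations used here, all the $q_\ell$ are positive for large $r$, so I will just assume positivity for $r$ large, which the region $m>(a+2b)n$ guarantees.

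\textbf{Step 2: continuity gives an interval of ratios.} The map $r\mapsto t_j\log q_\ell(r)$ is smooth near $r_0$. Hence there is $\delta>0$ such that every $r\in[r_0,r_0+\delta]$ satisfies the defining inequality of $A_m$ with $\epsilon$ in place of $2\epsilon'$-type errors. I choose $\epsilon'$ small enough for this, using $|e^{ix}-1|\le\|x/2\pi\|_\T\cdot2\pi$.

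\textbf{Step 3: counting.} For each $m$, every $n$ with $m/(r_0+\delta)\le n\le m/r_0$ lies in $A_m$, because $n\le m/r_0<\theta m$. The number of such $n$ is at least
\[
m\Big(\frac1{r_0}-\frac1{r_0+\delta}\Big)-1,
\]
which is $\gg m$. This gives $\liminf|A_m|/m>0$.

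\textbf{Main obstacle.} Step 1 is where the real difficulty sits, but it is already packaged in \cref{lemma:logarithmicuniformdistribution}. What remains delicate is choosing $r_0$ beyond the roots so that continuity and a consistent branch of the complex power hold on the whole interval; that is, reconciling the real-power convention for possibly negative values of $p_\ell$.
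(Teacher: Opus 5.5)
Your proposal is correct and follows essentially the same route as the paper. There, \cref{lemma:logarithmicuniformdistribution} (applied with $t_j/2\pi$) shows that the open set $I$ of good ratios $x>1/\theta$ is nonempty. Homogeneity then transfers the condition to those $(m,n)$ with $n/m\in J=\{1/x:x\in I\}$, and $|B_m|/m\to|J|>0$ gives the density bound; this is exactly your Steps 1--3, with your single interval $[r_0,r_0+\delta]$ playing the role of $I$. Your extra care in choosing $r_0$ beyond the real roots, and about the sign and branch of $p_\ell^{it}$, covers points the paper's proof passes over silently. The paper tacitly works with $|p_\ell|$, which is harmless in its application since $P_x,P_y,P_z>0$ on $m>(a+2b)n$.
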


\begin{proof}
    Fix $\epsilon>0$. 
    Choose $\eta>0$ small enough such that whenever $\|u\|_\T<\eta$ we have $|e(u)-1|<\epsilon/2$.

    Let $I\subset(1,\infty)$ be the set of $x>1/\theta$ satisfying
    $$\forall j\in\{1,\dots,d\}\quad\forall\ell\in\{1,2\}\quad \Big\|\frac{t_j}{2\pi}\big(\log \big|p_\ell(x,1)\big|- \log \big|p_3(x,1)\big|\big)\Big\|_\T<\eta.$$
    Note that $I$ is open. 
    By \cref{lemma:logarithmicuniformdistribution}, $I\neq\emptyset$.
    Therefore, the set $J:=\{1/x:x\in I\}$ is a non-empty open subset of $(0,1)$.
    
    For each $m\in\N$ define $B_m:=\{n\in[m]:n/m\in J\}$. 
    By homogeneity of the polynomials $p_1,p_2,p_3$ whenever $n\in B_m$ we have
    $$\forall j\in\{1,\dots,d\}\quad \forall\ell\in\{1,2\}\quad |p_\ell(m,n)^{it_j}-p_3(m,n)^{it_j}|<\epsilon/2.$$
    We conclude that $B_m\subset A_m$, and the proof is completed by the observation that
    $$\lim_{m\to\infty}\frac{|B_m|}m=|J|>0.$$
\end{proof}

\begin{corollary}\label{cor:weightforpolynomials}
    Fix $\epsilon>0$ and $t_1,\dots,t_d\in\R$. 
    Let $a,b,c\in\Z$ be perfect squares such that $a+b=c$ or $a=c$ and let $P_x,P_y,P_z\in\Z[m,n]$ be given by \eqref{eq:a=c=1_param} or \eqref{eq:a+b=c_param}, respectively.
    Define
    $$W:=\bigcap_{j=1}^d\Big\{(m,n)\in\N^2:\big|P_z(m,n)^{it_j}-P_y(m,n)^{it_j}\big|+\big|P_z(m,n)^{it_j}-P_x(m,n)^{it_j}\big|<\epsilon\Big\}.$$
    Then
    $$\liminf_{N\to\infty}\Eh\mathbf{1}_W(m,n)>0.$$
\end{corollary}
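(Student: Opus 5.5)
The plan is to deduce the corollary from \cref{lem:weightforpolynomials} applied with $p_1=P_x$, $p_2=P_y$, $p_3=P_z$, and then convert the conclusion about the sets $A_m$ into a lower bound for the average over $\Delta_N$. There are two main tasks: verify hypothesis \eqref{eq:conditiontointersectdiagonalold} for the explicit parametrizations, and handle the constraint $m>(a+2b)n$ defining $\Delta_N$.

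\textbf{Step 1: the hypothesis.} In both cases \eqref{eq:a=c=1_param} and \eqref{eq:a+b=c_param}, the one-variable polynomials $P_x(m,1)$, $P_y(m,1)$, $P_z(m,1)$ all have degree $2$ in $m$ with leading coefficient $1$. The only exception is $P_y=2\sqrt a\,mn$ in the case $a=c$, where $P_y(m,1)=2\sqrt a\, m$ has degree $1$.
\begin{itemize}
\item In the case $a+b=c$, all three are monic of degree $2$. Hence for every $v_1,v_2$ the ratio $P_x(m,1)^{v_1}P_y(m,1)^{v_2}P_z(m,1)^{-v_1-v_2}$ tends to $1$.
\item In the case $a=c$, the ratio behaves like $(2\sqrt a)^{v_2}m^{-v_2}$. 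It tends to $0$ or $\infty$ when $v_2\neq0$, and to $1$ when $v_2=0$.
\end{itemize}
In either case the limit lies in $\{0,1,\pm\infty\}$, as required. Positivity is not an issue for large $m$, so the real powers make sense.

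\textbf{Step 2: choosing $\theta$ and counting.} Take $\theta=1/(a+2b+1)$, which lies in $(0,1)$. For $n\le\theta m$ we have $m>(a+2b)n$, so all three polynomials are positive. Then $\{(m,n):m\le N,\ n\in A_m\}\subset W\cap\Delta_N$, and \cref{lem:weightforpolynomials} gives some $c>0$ and $m_0$ with $|A_m|\ge cm$ for all $m\ge m_0$. Therefore
$$|W\cap\Delta_N|\ \ge\ \sum_{m_0\le m\le N}cm\ \gg\ cN^2.$$
Since $|\Delta_N|\le N^2$, this yields $\liminf_{N\to\infty}\Eh\mathbf{1}_W>0$.

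The only delicate point is the bookkeeping: the set $A_m$ is defined with $n\le\theta m$, and we must check that this lies inside $\Delta_N$ and that the constant $\epsilon$ matches the definition of $W$. Both follow directly from the definitions, so I expect no real obstacle. The substantive input is \cref{lem:weightforpolynomials}; the one step needing care is the degree-$1$ case of $P_y$ in Step 1, which still satisfies \eqref{eq:conditiontointersectdiagonalold}.
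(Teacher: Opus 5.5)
Your proposal is correct and follows essentially the same route as the paper: you verify the limit hypothesis of \cref{lem:weightforpolynomials} by the same case analysis (the sign of $v_2$ when $a=c$, and the three polynomials being monic of degree $2$ when $a+b=c$), and then you apply the lemma and sum $|A_m|$ over $m\le N$. Your choice $\theta=1/(a+2b+1)$ instead of the paper's $1/(a+2b)$, together with the explicit bound $|W\cap\Delta_N|\gg N^2\ge|\Delta_N|$, is slightly cleaner bookkeeping of the strict inequality $m>(a+2b)n$ and of the normalization of $\Delta_N$; the paper treats both loosely, writing equalities where only inequalities hold.
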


\begin{proof}
    We seek to apply \cref{lem:weightforpolynomials} to $P_x,P_y,P_z$, so we first check that for any $v_1,v_2\in\Z$
    $$P(m,1): = P_x(m,1)^{v_1}P_y(m,1)^{v_2}P_z(m,1)^{-v_1-v_2}$$
    converges to either $0,1$ or $\pm\infty$ when $m\to\infty$.

    Case 1: If $a=c$, then
    $$P(m,1)= \frac{(m^2-b)^{v_1}(2\sqrt{a}m)^{v_2}}{(m^2+b)^{v_1+v_2}}= \frac{(1-\frac{b}{m^2})^{v_1}}{\big(1+\frac{b}{m^2}\big)^{v_1+v_2}}\Big(\frac{2\sqrt{a}}{m}\Big)^{v_2}.$$
    Depending on whether $v_2$ is zero, negative or positive, the expression tends to $1,+\infty$ or $0$, respectively.

    Case 2: If $a+b=c$, then
    \begin{align*}
        P(m,1)&= \frac{(m^2-2bm-ab)^{v_1}}{(m^2+2am-ab)^{v_2}}{(m^2+ab)^{v_1+v_2}}=
        \frac{\big(1-2\frac{b}{m}-\frac{ab}{m^2}\big)^{v_1}\big(1+2\frac{a}{m}-\frac{ab}{m^2}\big)^{v_2}}{\big(1+\frac{ab}{m^2}\big)^{v_1+v_2}}
    \end{align*}
    converges to $1$ as $m\to\infty$.
    Applying \cref{lem:weightforpolynomials} with $\theta=1/(a+2b)$, and letting $A_m$ be the set given by \eqref{eq_definitionAm}, we conclude that
    \begin{eqnarray*}
        \liminf_{N\to\infty}\Eh\mathbf{1}_W(m,n)
        &=&
        \liminf_{N\to\infty}\E_{n,m\in[N]}1_{m>(a+2b)n}1_W(m,n)
        \\&=&
        \liminf_{N\to\infty}\E_{m\in[N]}|A_m|/N>0.
    \end{eqnarray*}
\end{proof}

\subsection{A sequential lemma}
The following simple lemma will be used to handle the components of the completely multiplicative functions that pretend to be the constant $1$ function. 

\begin{lemma}\label{lem:sequencesandseries}
    For any $d\in\N$ and any $x:\N\mapsto\R^d$.
    \begin{equation}\label{eq:sequentiallemma}
        \lim_{K\to\infty}\ \liminf_{N\to\infty}\Bigg\|\sum_{K\leq n\leq N}x(n)\Bigg\|_{\T^d}=0.
    \end{equation}
\end{lemma}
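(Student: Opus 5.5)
The plan is to use compactness of $\T^d$ and a pigeonhole/recurrence argument on the partial sums. Define $S(N):=\sum_{n\leq N}x(n)\in\T^d$, reduced modulo $\Z^d$. Then for $K\leq N$ we have $\sum_{K\leq n\leq N}x(n)\equiv S(N)-S(K-1)$ in $\T^d$. So \eqref{eq:sequentiallemma} asks, roughly, that for large $K$ the orbit $(S(N))_{N\geq K}$ returns infinitely often close to the point $S(K-1)$.

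The key object is the $\omega$-limit set $L\subset\T^d$ of $(S(N))_N$, i.e. the set of accumulation points. It is nonempty and closed, by compactness. Fix $\delta>0$. Cover $\T^d$ by finitely many balls of radius $\delta/2$. Only finitely many indices $N$ have $S(N)$ lying in a ball that contains finitely many terms of the sequence. Hence there is $K_0$ such that for every $K\geq K_0$ the point $S(K-1)$ lies in a ball $B$ visited by infinitely many $S(N)$. For such $K$ there are infinitely many $N\geq K$ with $\|S(N)-S(K-1)\|_{\T^d}<\delta$. This gives
$$\liminf_{N\to\infty}\Bigg\|\sum_{K\leq n\leq N}x(n)\Bigg\|_{\T^d}\leq\delta\qquad\text{for all }K\geq K_0,$$
and since $\delta$ was arbitrary the limit in $K$ is $0$. (One can read the statement either with $\lim$ or $\limsup$ in $K$; the bound holds for all large $K$, so both follow.)

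The only point needing care is the reduction from a sum to a difference of partial sums. The norm $\|\cdot\|_{\T^d}$ is translation-invariant modulo $\Z^d$ and symmetric, so $\|S(N)-S(K-1)\|_{\T^d}$ is well defined on $\T^d$ and equals the quantity in \eqref{eq:sequentiallemma}. There is no real obstacle beyond phrasing the pigeonhole step cleanly: finitely many balls, each either visited finitely often, which accounts for finitely many indices in total, or infinitely often.
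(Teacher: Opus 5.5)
Your argument is correct and is essentially the paper's proof: both pass to the partial sums in $\T^d$ and use compactness to show that, for all large $K$, the relevant partial sum lies within $\delta$ of a region the partial sums revisit infinitely often. Your finite cover by $\delta/2$-balls is an explicit pigeonhole version of the paper's step ``for large $K$, $X_K$ is $\epsilon$-close to some limit point $s$''; you also write the index correctly as $S(K-1)$, where the paper harmlessly writes $X_K$.
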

Note that the version of this lemma where the $\liminf$ is replaced with a $\limsup$ is false, as seen by taking $d=1$ and $x(n)=1/n$. 
This is the reason it is not possible to replace $\limsup$ with $\liminf$ in the definition of JP.

\begin{proof}
    Let $\pi:\R^d\to\T^d=\R^d/\Z^d$ be the canonical projection and define, for each $N\in\N$, 
    $$X_N:=\pi\left(\sum_{n\leq N}x(n)\right)\in\T^d.$$
    Observe that \eqref{eq:sequentiallemma} can be written as
    $$\lim_{K\to\infty}\liminf_{N\to\infty}\|X_N-X_K\|_{\T^d}=0.$$
    
    Let $S\subset\T^d$ denote all limit points of the sequence $(X_N)_{N=1}^{\infty}$.
    By compactness, for every $\epsilon>0$ there exists $K_0\in\N$ such that for any $K>K_0$ there exists some point $s\in S$ with $\|X_K-s\|_{\T^d}<\epsilon$.
    Take $K>K_0$ and $s\in S$ satisfying $\|X_K-s\|_{\T^d}<\epsilon$, then find an increasing sequence $(N_\ell)_{\ell=1}^\infty$ such that $X_{N_\ell}\to s$ as $\ell\to\infty$.
    It follows that 
    $$\liminf_{N\to\infty}\|X_N-X_K\|_{\T^d}\leq \lim_{\ell\to\infty}\|X_{N_\ell}-X_K\|_{\T^d}\leq
    \lim_{\ell\to\infty}\|X_{N_\ell}-s\|_{\T^d}+\|s-X_K\|_{\T^d}<\epsilon$$
    finishing the proof.
\end{proof}

\section{Proof of the main result}

\subsection{A sufficient condition for JP}

Let $m,n,Q\in\N$ and $f\in\M$. Given $P_x,P_y,P_z\in\Z[m,n]$, denote
\begin{equation}\label{eq:ABCdef}
    \begin{gathered}
    A_Q^f(m,n):= f\big(P_z^Q(m,n)\big)\overline{f\big(P_x^Q(m,n)\big)},\qquad\qquad B_Q^f(m,n):= f\big(P_z^Q(m,n)\big)\overline{f\big(P_y^Q(m,n)\big)},\\
    C_Q^{f,g}(m,n):= f\big(P_z^Q(m,n)\big)\overline{g\big(P_z^Q(m,n)\big)}.
    \end{gathered}
\end{equation}

\begin{lemma}\label{lem:ABClemma}
    Let $f_1,\dots,f_d\in\M$ and $A^{f_j}_Q(m,n),B^{f_j}_Q(m,n),C^{f_j,f_{j+1}}_Q(m,n)$ be given by \eqref{eq:ABCdef}. If for every continuous $\psi:\S^1\to[0,1]$ with $\psi(1)>0$ there exists $Q\in\N$ such that
    $$\limsup_{N\to\infty}\Eh \Bigg(\prod_{j=1}^d\psi\Big(A^{f_j}_Q(m,n)\Big)\psi\Big(B^{f_j}_Q(m,n)\Big) \prod_{j=1}^{d-1}\psi\Big(C^{f_j,f_{j+1}}_Q(m,n)\Big)\Bigg)>0,$$
    then $f_1,\dots,f_d$ are JP w.r.t. $a,b,c$.
\end{lemma}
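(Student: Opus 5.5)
The plan is to pass from the JP average to the hypothesised average in two moves. First I restrict $(m,n)$ to a residue class modulo $Q$. Then I evaluate the inner $k$-average exactly, using equidistribution of $(f_1(k),\dots,f_d(k))$ along $\Phi$ in a compact group.

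\emph{Step 1 (equidistribution in $k$).} Let $H\subset(\S^1)^d$ be the closure of $\{(f_1(n),\dots,f_d(n)):n\in\N\}$. This is a compact subgroup, because the $f_j$ are completely multiplicative. For $s\in\Z^d$, the character $u\mapsto u^s$ is trivial on $H$ exactly when $f^s\equiv1$. If $f^s\not\equiv1$, \cref{lem:folnervanish} gives $\lim_K\E_{k\in\Phi_K}f^s(k)=0$. By Weyl's criterion on $H$, the points $(f_j(k))_j$ therefore equidistribute with respect to the Haar measure $\mu_H$ along $\Phi$, hence along any subsequence $\Psi$.

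Consequently, for each fixed $(m,n)$, writing $u_w=(f_j(P_w(m,n)))_j\in H$ for $w\in\{x,y,z\}$ and using complete multiplicativity,
$$\Es\prod_{j=1}^d\prod_{w}\psi\big(f_j(k)\,u_{w,j}\big)=\int_H\prod_{j=1}^d\psi(h_j)\,\psi\big(h_j\overline{A_j}\big)\,\psi\big(h_j\overline{B_j}\big)\d\mu_H(h).$$
Here I substituted $h=v\cdot u_z$, which is legitimate by translation invariance of $\mu_H$. I also wrote $A_j=f_j(P_z)\overline{f_j(P_x)}$ and $B_j=f_j(P_z)\overline{f_j(P_y)}$.

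\emph{Step 2 (choice of $\psi'$).} Choose $\delta>0$ such that $\psi\geq\psi(1)/2$ on the arc of radius $2\delta$ around $1$. Let $\psi':\S^1\to[0,1]$ be continuous, with $\psi'(1)>0$ and support in the arc of radius $\delta$. Let $U=\{h\in H:|h_j-1|<\delta\ \forall j\}$; it is an open neighbourhood of the identity in $H$, so $\mu_H(U)>0$.

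If all $A_j,B_j$ lie within $\delta$ of $1$, the integrand above is at least $(\psi(1)/2)^{3d}$ on $U$. Hence, using $\psi'\leq1$ and inserting the factors $\psi'(C_Q^{f_j,f_{j+1}})\leq1$,
$$\Es\prod_{j,w}\psi\big(f_j(kP_w)\big)\;\geq\;c\prod_{j}\psi'(A_j)\,\psi'(B_j)\prod_{j<d}\psi'\big(C^{f_j,f_{j+1}}\big),\qquad c:=(\psi(1)/2)^{3d}\mu_H(U)>0.$$
Note that the $C$-factors are not actually needed here; they can only decrease the right-hand side.

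\emph{Step 3 (restriction to the lattice).} Apply the hypothesis to $\psi'$ to obtain $Q$. Since the $k$-average is nonnegative, I may discard all $(m',n')\in\Delta_{N'}$ except those of the form $(Qm+1,Qn)$. The condition $(Qm+1,Qn)\in\Delta_{N'}$ agrees with $(m,n)\in\Delta_{N}$, for $N\approx N'/Q$, up to $O(N)$ boundary points. Since $|\Delta_{N'}|\asymp Q^2|\Delta_N|$, the JP average at scale $N'$ is at least $\gg Q^{-2}c$ times the hypothesised average at scale $N$, minus $o(1)$. Taking $\limsup$ gives \eqref{eq_jpdef}.

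The main point needing care is Step 1. One has to check that every character of $H$ has the form $u\mapsto u^s$ with $s\in\Z^d$, and that $u^s$ is trivial on $H$ precisely when $f^s\equiv1$. One also has to check that the limit defining $\Es$ exists for each $(m,n)$, which the equidistribution supplies.
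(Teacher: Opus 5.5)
Your proof is correct, and it differs from the paper's at the key step. Both arguments have the same outer structure. Each bounds the inner $k$-average from below by a constant multiple of the hypothesised integrand, then restricts to the grid $(Qm+1,Qn)$ at a cost of $Q^{-2}$.

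The paper handles the $k$-average with a Bohr set. On the set $\Lambda$ where all $A,B,C$ are $\epsilon$-close to $1$, it uses the $C$-factors to chain $f_1(P_z^Q),\dots,f_d(P_z^Q)$ together. This puts all $3d$ values $f_j(P_w^Q(m,n))$ in one short arc $I$. It then bounds the $k$-average from below by $\epsilon^{3d}d_\Phi(\Gamma)$, where $\Gamma=\{k:f_j(k)\in I^{-1}\ \forall j\}$.

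You instead evaluate the $k$-average exactly as a Haar integral over $H$ and translate by $u_z\in H$, so only the ratios $A_j,B_j$ enter. This buys you several things:
\begin{itemize}
\item Your constant $c$ is visibly independent of $(m,n)$ and $Q$. In the paper, $I$ and hence $\Gamma$ vary with $(m,n)$. The uniform lower bound on $d_\Phi(\Gamma)$ needed in the final estimate is left implicit there. It follows exactly from your translation-invariance observation, or from pigeonholing $I$ among finitely many arcs.
\item You see that the $C$-factors are superfluous. Since $H$ is a group, $(\overline{f_j(P_z)})_j$ is always approximated by some $(f_j(k))_j$, whether or not the $f_j(P_z)$ are close to one another.
\item You get that the limit defining $\Es$ exists along all of $\Phi$, so the choice of subsequence is immaterial.
\item Your inequality is pointwise in $(Q,m,n)$, so it survives an extra average over $Q\in\Phi_K$. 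That $Q$-averaged form is what is effectively needed when the proof of \cref{thm:general_a=c} appeals to \cref{prop:a=c=1PretentiousCase}.
\end{itemize}

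The paper's version is shorter only because it cites positivity of the multiplicative density of Bohr sets. That fact is essentially the equidistribution statement you prove in Step 1.
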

\begin{proof}
    Fix a continuous $\psi:\S^1\to[0,1]$ with $\psi(1)>0$ and choose $\epsilon$ small enough so that any point $z\in\S^1$ with $\|z\|_\T<4\epsilon$ satisfies $\psi(z)>\epsilon$.
    
    By hypothesis, there is set $\Lambda\subset\{(m,n)\in\N^2:m>(a+2b)n\}$ with positive upper density such that for any $(m,n)\in\Lambda$ and all $j$,
    $$\|A_Q^{f_j}(m,n)\|_\T<\epsilon,\ \|B_Q^{f_j}(m,n)\|_\T<\epsilon,\ \|C_Q^{f_j,f_{j+1}}(m,n)\|_\T<\epsilon.$$ 
    Therefore, when $(m,n)\in\Lambda$, all $3d$ points
    $$f_j(P_x^Q(m,n)),\ f_j(P_y^Q(m,n)),\ f_j(P_z^Q(m,n)), \quad j=1,\dots,d$$
    lie within an arc $I\subset\S^1$ of length $2\epsilon$.

    In particular, there exists $k\in\N$ such that $f_j(k)\in I$ for all $j$. Since the closure 
    $$\overline{\big\{\big(f_1(k),\dots,f_d(k)\big):k\in\N\big\}}\subset(\S^1)^d$$ is a subgroup, it follows that the set $\Gamma:=\{k\in\N:\forall j\in\{1,\dots,d\},\  f_j(k)\in I^{-1}\}$ is non-empty. 
    Since $\Gamma$ is a multiplicative Bohr set, it has positive multiplicative density.
    Note that for any $k\in\Gamma$ and $(m,n)\in\Lambda$ we have 
    $\|f_j\big(kP_x^Q(m,n)\big)\|_\T<2\epsilon$ for every $j=1,\dots,d$. Similarly with $P_y$ and $P_z$ in place of $P_x$.
    We deduce that for any $(m,n)\in\Lambda$
    $$\Psi(m,n):=\Es\prod_{j=1}^d \psi\Big(f_j\big(kP_x^Q(m,n)\big)\Big)\cdot\psi\Big(f_j\big(kP_y^Q(m,n)\big)\Big)\cdot\psi\Big(f_j\big(kP_z^Q(m,n)\big)\Big)\geq d_\Phi(\Gamma)\epsilon^{3d}.$$ 
    Finally, note that the left-hand side of \eqref{eq_jpdef} is bounded from below by
    $$\limsup_{N\to\infty}\Eh\frac1{Q^2}\mathbf{1}_{\Lambda}(m,n)\Psi(m,n)
    \geq
    \frac1{Q^2}d(\Lambda)d_\Phi(\Gamma)\epsilon^{3d}>0$$
    which shows that $f_1,\dots,f_d$ are JP w.r.t. $P_x$, $P_y$, $P_z$ and completes the proof.
\end{proof}

We shall use the following standard consequence of Fejér's theorem which follows from applying the one-dimensional version coordinate-wise; see \cite[Ch. I, Sec. 2--3]{katznelson2004introduction}. The Fejér kernel on $\T$ is defined as
$$F_r(x):= \sum_{n=-r}^{r}\Big(1-\frac{|n|}{r+1}\Big)e^{2\pi inx},$$
and its Fourier transform is given by
$$\widehat{F_r}(n)=\begin{cases}
    1-\frac{|n|}{r+1},&|n|\leq r,\\
    0,&|n|>r.
\end{cases}$$

\begin{lemma}[Multidimensional Fejér theorem]\label{lem:multidimFejer}
    Let $\rho\in C(\T^l)$. For $r\in\N$, define
    $$F_r^{(l)}(x_1,\dots,x_l)=\prod_{j=1}^l F_r(x_j),$$
    where $F_r$ is the one-dimensional Fejér kernel. 
    Then the convolutions $\rho*F_r^{(l)}$ converge uniformly to $\rho$ as $r\to\infty$.
\end{lemma}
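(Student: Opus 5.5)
The plan is to reduce to the one-dimensional Fejér theorem via a telescoping estimate. First, recall the one-dimensional facts I will use: $F_r\ge 0$, $\int_\T F_r=1$, and for every $\delta>0$,
$$\int_{\|x\|_\T>\delta}F_r(x)\,dx\to0 \quad\text{as } r\to\infty.$$
Positivity follows from the closed form $F_r(x)=\frac{1}{r+1}\big(\frac{\sin(\pi(r+1)x)}{\sin(\pi x)}\big)^2$. The normalisation is $\widehat{F_r}(0)=1$. Tail decay follows because the closed form gives $F_r(x)\le \frac{1}{(r+1)\sin^2(\pi\delta)}$ when $\|x\|_\T>\delta$. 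These are the standard properties from \cite[Ch. I]{katznelson2004introduction}.

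It follows that $F_r^{(l)}=\prod_j F_r(x_j)$ is nonnegative with total integral $1$. It also has vanishing tail mass outside the box $\{\|x\|_{\T^l}\le\delta\}$. To see this, bound the mass of the complement by a union over coordinates: it is at most $\sum_{j=1}^l\int_{\|x_j\|_\T>\delta}F_r(x_j)\,dx_j$. Here the other coordinates integrate to $1$.

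Next comes the approximate-identity argument. Fix $\epsilon>0$. Since $\T^l$ is compact, $\rho$ is uniformly continuous, so choose $\delta$ with $|\rho(x-y)-\rho(x)|<\epsilon$ whenever $\|y\|_{\T^l}\le\delta$. Then write
$$|\rho*F_r^{(l)}(x)-\rho(x)|=\Big|\int_{\T^l}(\rho(x-y)-\rho(x))F_r^{(l)}(y)\,dy\Big|.$$
Split the integral over $\{\|y\|\le\delta\}$ and its complement. The first part is at most $\epsilon$. The second part is at most $2\|\rho\|_\infty\cdot\sum_j\int_{\|y_j\|>\delta}F_r$, which tends to $0$ uniformly in $x$. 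Hence $\limsup_r\|\rho*F_r^{(l)}-\rho\|_\infty\le\epsilon$ for every $\epsilon$, giving uniform convergence.

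An alternative is to apply the one-dimensional theorem coordinate-wise, convolving one variable at a time and telescoping. In that route each step requires uniformity in the remaining variables, which holds by equicontinuity of the family $\{\rho(\cdot,x')\}$. I prefer the direct kernel argument above, since it avoids this issue.

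The only step needing care is the tail estimate for the product kernel. It is handled by the union bound, so I expect no real obstacle. A final remark for later use: $\rho*F_r^{(l)}$ is a trigonometric polynomial whose coefficients are $\hat\rho(n)\prod_j\widehat{F_r}(n_j)$, supported on $\|n\|_\infty\le r$.
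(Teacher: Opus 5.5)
Your proof is correct, and it takes a slightly different route from the one the paper indicates. The paper gives no argument beyond the remark that the lemma ``follows from applying the one-dimensional version coordinate-wise'' (citing Katznelson). That is, it treats the one-dimensional Fejér theorem as a black box and iterates it one variable at a time.

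You never invoke the one-dimensional theorem itself. Instead you check directly that the product kernel $F_r^{(l)}$ is a positive summability kernel on $\T^l$:
\begin{itemize}
\item nonnegativity comes from the closed form;
\item unit mass comes from $\widehat{F_r}(0)=1$;
\item the mass off the box $\{\|y\|_{\T^l}\le\delta\}$ vanishes, by the union bound over coordinates.
\end{itemize}
You then run the standard approximate-identity estimate using uniform continuity of $\rho$.

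The coordinate-wise route is shorter to state, but making it rigorous takes three steps. One must telescope $\rho*F_r^{(l)}-\rho$ into $l$ one-variable differences. One must use that convolution with a probability kernel does not increase $\|\cdot\|_\infty$. And one must check that the one-dimensional convergence is uniform in the frozen variables. That last point holds because the one-dimensional error is controlled by the sup norm and the modulus of continuity, both inherited from $\rho$; it is exactly the issue you flag.

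Your route is self-contained and avoids that bookkeeping. The cost is re-deriving the kernel estimates. Note that the tail bound $F_r(x)\le 1/((r+1)\sin^2(\pi\delta))$ needs $\delta<1/2$, which you may of course assume.

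Your closing observation is also useful: $\rho*F_r^{(l)}$ is a trigonometric polynomial with coefficients $\hat\rho(n)\prod_j\widehat{F_r}(n_j)$. This is precisely the property the paper uses afterwards, for example when it argues that $\widehat{\gamma*F_r}$ vanishes on $Z\times\Z^{3d}$.
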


\subsection{Case \texorpdfstring{$a=c$}{a=c} for pretentious functions}
In this section we establish the case of \cref{thm_jpforabc} where $a=c$ and all the functions are pretentious. 
In fact we show that in this case, the functions satisfy the stronger condition in \cref{lem:ABClemma}.
The proof consists of expanding $\psi$ into a Fourier series and using the results collected above to estimate the contribution of different functions.
\begin{theorem}\label{prop:a=c=1PretentiousCase}
    Let $a,b\in\N$ be perfect squares, let $d\in\N$ and let $P_x,P_y,P_z$ be given by \eqref{eq:a=c=1_param}, namely
    $$P_x=m^2-bn^2,\qquad P_y=2\sqrt{a}mn,\qquad P_z=m^2+bn^2.$$
    Let $f_1,\dots, f_d\in\M$ be pretentious. Then $f_1,\dots,f_d$ are JP w.r.t. $a,b,a$.    
\end{theorem}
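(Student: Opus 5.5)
We verify the hypothesis of \cref{lem:ABClemma}: given a continuous $\psi:\S^1\to[0,1]$ with $\psi(1)>0$, we find $Q\in\N$ for which
$$\limsup_{N\to\infty}\Eh\prod_{j=1}^d\psi\big(A^{f_j}_Q\big)\psi\big(B^{f_j}_Q\big)\prod_{j=1}^{d-1}\psi\big(C^{f_j,f_{j+1}}_Q\big)>0.$$
Then \cref{lem:ABClemma} gives the JP property. Shrinking $\psi$, we may assume $\psi$ is supported in a tiny arc around $1$ and bounded below near $1$. Write $f_j\sim\chi_j n^{it_j}$ and let $\epsilon>0$ be small.

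\textbf{Step 1: concentration.} For $K$ large and $Q\in\Phi_K$, we approximate each factor.
\begin{itemize}
\item $P_z^Q(m,n)=(Qm+1)^2+bQ^2n^2$ is an irreducible form with $P_z(1,0)=1$, so \cref{cor:irrConcentration} gives $f_j(P_z^Q)\approx (P_z(Qm,Qn))^{it_j}\exp(G_{P_z}(f_j;K,N))$ in $L^1(\Delta_N)$.
\item $P_x^Q=(Qm+1+\beta Qn)(Qm+1-\beta Qn)$ with $b=\beta^2$, and each linear factor has the form $Q(l_1m+l_2n)+1$. So \cref{cor:redConcentration} gives $f_j(P_x^Q)\approx (P_x(Qm,Qn))^{it_j}\exp(2F(f_j;K,N))$.
\item $P_y^Q=2\sqrt a\,(Qm+1)(Qn)$ is not of the form $Q(\cdot)+1$, since the factor $Qn$ is a multiple of $Q$. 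I would handle it by a change of the parametrization shift: use instead $P(Qm,Qn+1)$-type shifts, or absorb $f_j(2\sqrt a Qn)$ differently. My first attempt is to use the symmetry $m\leftrightarrow n$ roles less directly. Since $f_j(Qn)=f_j(Q)f_j(n)$ with $f_j(n)$ uncontrolled, the cleanest route is to note that $B^{f_j}_Q$ involves $f_j(P_y^Q)$. Alternatively, apply the concentration to $P^Q(m,n)=P(Qm+1,Qn+1)$, where all linear factors are $\equiv1\bmod Q$. This needs $P_z(1,1)=1+b$ to be coprime to $Q$, which fails; so I would shift by $(1,0)$ for $P_x,P_z$ and accept that $P_y$ needs the factor $n$ shifted too. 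I will need to sort out this technicality, perhaps by working on a sublattice $n\equiv$ suitable residue so that $Qn+1$ appears.
\end{itemize}

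\textbf{Step 2: the archimedean and exponential parts.} The archimedean factors of the approximations to $A^{f_j}_Q$ and $B^{f_j}_Q$ are $(P_z/P_x)^{it_j}$ and $(P_z/P_y)^{it_j}$ evaluated at $(Qm,Qn)$, which is homogeneous. By \cref{cor:weightforpolynomials}, these are within $\epsilon$ of $1$ on a set $W$ of positive density in $\Delta_N$. The factors $\exp(G-2F)$ are constants depending only on $K$ and $N$. Since $\omega_{P_z}(p)$ equals $1+\left(\frac{-1}{p}\right)$ for $p\nmid 2b$, the difference $G_{P_z}(f_j;K,N)-2F(f_j;K,N)$ is a sum over primes $K<p\le N$ of real-weighted terms $x(p)\in\R^{d'}$ after taking imaginary parts. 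The real parts are small because $\D(f_j,\chi_jn^{it_j};K,\infty)\to0$.

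\cref{lem:sequencesandseries} then gives a large $K$ and infinitely many $N$ along which all these exponentials lie within $\epsilon$ of $1$, jointly in $j$. This is exactly why the definition of JP uses $\limsup$. The factor $C^{f_j,f_{j+1}}$ is handled the same way, using $(P_z)^{i(t_j-t_{j+1})}$. To make $W$ work, I would enlarge the list of $t$'s given to \cref{cor:weightforpolynomials} to include the differences $t_j-t_{j+1}$. However, $(P_z)^{i(t_j-t_{j+1})}$ itself is not close to $1$. I therefore expect to first replace $f_j$ by generators via \cref{lem:subgroupGenJP} and \cref{lem:structuralResult}, or simply to control $C$ as a product that contributes a Bohr-type recurrent factor, choosing $(m,n)$ in a dilated region where the common value $P_z^{it}$ is near $1$. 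That is again a one-parameter log-equidistribution, handled as in \cref{lemma_logweyl}.

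\textbf{Step 3: conclusion.} On $W$ and along the chosen $N$, each argument of $\psi$ is within $O(\epsilon)$ of $1$ outside an exceptional set of measure $O(\epsilon)$, by Markov's inequality applied to the $L^1$ errors. This makes the average at least $\tfrac12 d(W)\min\psi>0$.

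\textbf{Main obstacle.} The main obstacle is making the constant phases $\exp(G-2F)$ and the archimedean factor of $C$ simultaneously small. I would attempt this via \cref{lem:sequencesandseries} combined with the Fejér expansion in \cref{lem:multidimFejer}, reducing to showing that each Fourier mode has a nonnegative limit contribution.
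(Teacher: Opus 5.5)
There is a genuine gap. You correctly locate the two real difficulties, but you resolve neither of them.

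\textbf{The factor from $P_y$.} Since $P_y^Q=2\sqrt a\,(Qm+1)\cdot Qn$, you get the factor $f_j(2\sqrt a\,Qn)$, and no concentration estimate controls it. Your shift to $(Qm+1,Qn+1)$ only moves the problem to another variable: for $b=1$ one gets $P_x(Qm+1,Qn+1)=Q(m-n)\cdot(Q(m+n)+2)$. This is unavoidable, because for $a=b=1$ one of $x,y$ is always even, so for even $Q$ some factor must share primes with $Q$.

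Restricting to $n\equiv 1\bmod Q$ does let $f_j(n)$ concentrate. But it leaves inside $B^{f_j}_Q$ the constant phase $f_j(2\sqrt a\,Q)\,(2\sqrt a\,Q)^{-it_j}$, which must be close to $1$ for all $j$ simultaneously. Nothing in your argument arranges this; it would require choosing $Q$, among highly divisible integers, inside a suitable multiplicative Bohr set, which is a separate argument you have not given.

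\textbf{The factor from $C$.} The archimedean factor $P_z(Qm,Qn)^{i(t_j-t_{j+1})}$ of $C^{f_j,f_{j+1}}_Q$ is not close to $1$. You list possible remedies but do not carry any of them out. Your fallback, showing that each Fourier mode has a nonnegative limit contribution, is not a workable mechanism: individual modes are complex-valued and have no sign.

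\textbf{What the paper does instead.} The missing idea is not to fix $Q$, but to average over $Q\in\Phi_K$ and then project in Fourier space.
After concentration, $\tilde A^{f_j}$ is independent of $Q$ by homogeneity. The $Q$-dependence of $\tilde B^{f_j}_Q$ and $\tilde C^{f_j,f_{j+1}}_Q$ is exactly $Q^{it_j}\overline{f_j(Q)}$ and $Q^{2i(t_j-t_{j+1})}$ respectively.
Expand $\psi^{\otimes(2d-1)}$ using \cref{lem:multidimFejer}. Then \cref{lem:folnervanish} kills every frequency $(s_1,s_2)$ outside the group $Z$ of those with $f^{s_1}(n)\equiv n^{i(\sum_j s_{1,j}t_j+2\sum_j s_{2,j}(t_j-t_{j+1}))}$. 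So $\psi^{\otimes(2d-1)}$ may be replaced by its projection $\rho$ onto $Z$.
For $s\in Z$, uniqueness of the pretentious decomposition forces $f^{s_1}(n)=n^{i\sum_j s_{1,j}t_j}$ exactly, and also $\sum_j s_{2,j}(t_j-t_{j+1})=0$. This removes both obstacles at once:
the uncontrolled $f^{s_1}(2\sqrt a\,Qn)$ becomes archimedean and merges into the homogeneous ratio $\bigl((m^2+bn^2)/(2\sqrt a\,mn)\bigr)^{i\sum_j s_{1,j}t_j}$, and the archimedean part of $\tilde C$ cancels identically.
Only after this step do your Steps 2--3 apply: the set $W$ from \cref{cor:weightforpolynomials}, the subsequence of $N$ from \cref{lem:sequencesandseries}, and Markov's inequality. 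They give a lower bound involving $\rho(1,\dots,1)=\int_{Z^\perp}\psi^{\otimes(2d-1)}>0$.
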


\begin{proof}
    Suppose that $f_j\sim \chi_j\cdot n^{it_j}$ for some Dirichlet character $\chi_j$ and $t_j\in\R$. Let $\psi:\S^1\to[0,1]$ be continuous with $\psi(1)>0$.
    By \cref{lem:ABClemma}, it suffices to show
    \begin{equation}\label{eq:a=c=1_PretentiousProofGoal}
        \mathcal{L}\left(\prod_{j=1}^d\psi\left(A^{f_j}_Q(m,n)\right)\psi\left(B^{f_j}_Q(m,n)\right) \prod_{j=1}^{d-1}\psi\left(C^{f_j,f_{j+1}}_Q(m,n)\right)\right)>0,
    \end{equation}
    where $A^{f_j}_Q$, $B^{f_j}_Q$ and $C^{f_j,f_{j+1}}_Q$ are given by \eqref{eq:ABCdef} and $\mathcal{L}$ is short for
    \begin{equation}\label{eq:defL}
        \limsup_{K\to\infty} \limsup_{N\to\infty} \E_{Q\in\Phi_K}\Eh.
    \end{equation}
    With a view towards applying the concentration estimates, we let 
    \begin{equation*}
        \begin{gathered}
        \tilde{A}^{f_j}(m,n):= \left(\frac{m^2+bn^2}{m^2-bn^2}\right)^{it_j}\exp{\Big(G_{P_z}(f_j;K,N)-2F(f_j;K,N)\Big)},\\
        \tilde{B}^{f_j}_Q(m,n):= \left(\frac{Qm^2+bQn^2}{m}\right)^{it_j}\overline{f_j(2\sqrt{a}Qn)}\exp\Big(G_{P_z}(f_j;K,N)-F(f_j;K,N)\Big),\\
        \tilde{C}^{f_j,f_{j+1}}_Q(m,n):= \Big((Qm)^2+b(Qn)^2\Big)^{i(t_j-t_{j+1})}\exp\Big(G_{P_z}(f_j;K,N)-G_{P_z}(f_{j+1};K,N)\Big),
        \end{gathered}
    \end{equation*}
    where $G_P$ is defined in \eqref{eq:G(f;K,N)def} and $F$ is defined in \eqref{eq:F(f;K,N)def}.
    Write
    \begin{equation*}
        \begin{gathered}
        \alpha_j^Q(m,n):= f_j(P_z^Q(m,n)),\quad \tilde{\alpha}_j^Q(m,n):= P_z(Qm,Qn)^{it_j}\exp(G_{P_z}(f_j;K,N)),\\
        \beta_j^Q(m,n):= f_j(P_x^Q(m,n)),\quad \tilde{\beta}_j^Q(m,n):= P_x(Qm,Qn)^{it_j}\exp(2F(f_j;K,N)).
        \end{gathered}
    \end{equation*}
    Since $P_z(1,0)=1$ and $P_z$ is irreducible, by \cref{cor:irrConcentration}, for every $\epsilon>0$, once $K$ is big enough, every $Q\in\Phi_K$ satisfies
    \begin{equation}\label{eq:irrEstimate}
        \limsup_{N\to\infty}\Eh \big|\alpha_j^Q(m,n)-\tilde{\alpha}_j^Q(m,n)\big|\leq \epsilon.
    \end{equation}
    Since $P_x$ is reducible, we can factor $P_x^Q(m,n)= L_1^Q(m,n)\cdot L_2^Q(m,n)$, with $L_1^Q(m,n):=Q\big(m+\sqrt bn\big)+1$ and $L_2^Q(m,n):=Q\big(m-\sqrt bn\big)+1$. 
    Since $f_j$ is completely multiplicative, we have
    $$\beta_j^Q(m,n)= f_j(L_1^Q(m,n))\cdot f_j(L_2^Q(m,n)).$$
    Applying \cref{cor:redConcentration} to each linear factor separately, for every $\epsilon>0$, once $K$ is large enough, every $Q\in\Phi_K$ satisfies
    \begin{equation}\label{eq:redEstimate}
        \limsup_{N\to\infty}\Eh\big|\beta_j^Q(m,n)-\tilde{\beta}_j^Q(m,n)\big|\leq \epsilon.
    \end{equation}
    Note that $\tilde{A}^{f_j}(m,n)=\tilde{\alpha}_j^Q(m,n)\cdot\overline{\tilde{\beta}_j^Q(m,n)}$ and that
    $$\big|A_Q^{f_j}-\tilde{A}^{f_j}\big|= \big|\alpha_j^Q\overline{\beta_j^Q}-\tilde{\alpha}_j^Q\overline{\tilde{\beta}_j^Q}\big|\leq \big|\alpha_j^Q-\tilde{\alpha}_j^Q\big|\cdot\big|\beta_j^Q\big|+ \big|\tilde{\alpha}_j^Q\big|\cdot\big|\beta_j^Q-\tilde{\beta}_j^Q\big|.$$
    Observe that $\big|\beta_j^Q\big|=1$ and that $\big|\tilde{\alpha}_j^Q\big|$ is bounded for large enough $K$. Thus, averaging, taking the limits and using \eqref{eq:irrEstimate} and \eqref{eq:redEstimate}, we find that
    $$\mathcal{L}\Big|A^{f_j}_Q(m,n)-\tilde{A}_Q^{f_j}(m,n)\Big|=0.$$
    Then, since $\S^1$ is compact, $\psi$ is uniformly continuous and so
    \begin{equation}\label{eq:ATildeEstimate}
        \mathcal{L}\Big|\psi\Big(A^{f_j}_Q(m,n)\Big)-\psi\Big(\tilde{A}_Q^{f_j}(m,n)\Big)\Big|=0.
    \end{equation}
    Similar arguments can be used to show that also
    \begin{equation}\label{eq:BCTildeEstimate}
        \mathcal{L}\Big|\psi\Big(B_Q^{f_j}(m,n)\Big)-\psi\Big(\tilde{B}^{f_j}(m,n)\Big)\Big|=0,\quad \mathcal{L}\Big|\psi\Big(C_Q^{f_j,f_{j+1}}(m,n)\Big)-\psi\Big(\tilde{C}^{f_j,f_{j+1}}_Q(m,n)\Big)\Big|=0.
    \end{equation}
    
    Inserting the estimates \eqref{eq:ATildeEstimate} and \eqref{eq:BCTildeEstimate} into \eqref{eq:a=c=1_PretentiousProofGoal}, it now suffices to prove
    \begin{equation}\label{eq:a=c=1_PretentiousProofGoal2}
        \mathcal{L}\left(\prod_{j=1}^d\psi\left(\tilde A^{f_j}(m,n)\right)\psi\left(\tilde B_Q^{f_j}(m,n)\right) \prod_{j=1}^{d-1}\psi\left(\tilde C^{f_j,f_{j+1}}_Q(m,n)\right)\right)>0.
    \end{equation}
    We establish \eqref{eq:a=c=1_PretentiousProofGoal2} by expanding $\psi$ into its Fourier series.
    Certain terms in the series will vanish when we take the average over $Q$, so we collect those frequencies that survive this step in the group $Z\subset\Z^{2d-1}$ described by
    $$Z=\left\{s=(s_1,s_2)\in\Z^d\times\Z^{d-1}: f^{s_1}(n)\equiv n^{i\big(\sum_{j=1}^ds_{1,j}t_j+2\sum_{j=1}^{d-1}s_{2,j}(t_j-t_{j+1})\big)}\right\}.$$
    It is now convenient to introduce the notation
    $$\tilde{B}_Q^f(m,n):= \Big(\tilde{B}_Q^{f_1}(m,n),\dots,\tilde{B}_Q^{f_d}(m,n)\Big),\quad \tilde C^{f}_Q(m,n):=\Big(\tilde C^{f_1,f_2}_Q(m,n),\dots,\tilde C^{f_{d-1},f_d}_Q(m,n)\Big).$$
    Recall the notation $\psi^{\otimes^{2d-1}}:(\S^1)^{2d-1}\to[0,1]$ to denote the product function
    $$(z_1,\dots,z_{2d-1})\mapsto\psi(z_1)\cdots\psi(z_{2d-1}),$$
    and the notation $z^c:=z_1^{c_1}\dots z_r^{c_r}$ for $c=(c_1,\dots,c_r)\in\Z^r$ and $z=(z_1,\dots,z_r)\in(\S^1)^r$.
    The projection of $\psi^{\otimes^{2d-1}}$ to the frequencies in $Z$ is the function $\rho:(\S^1)^{2d-1}\to[0,1]$ defined by
    $$\rho(z)=\int_{Z^\perp}\big(\psi\otimes\cdots\otimes\psi\big)(zz')d z',$$
    where $Z^\perp:=\{z\in (\S^1)^{2d-1}:(\forall s\in Z),\ z^s=1\}$ is the dual of the group $Z$. 
    Hence the Fourier transform of $\rho$ is
    \begin{equation*}
        \widehat{\rho}(u,v):=\int_{(\S^1)^{2d-1}}\rho(z)z^{-(u,v)}\d z=
    \begin{cases}
        \widehat{\psi^{\otimes^{2d-1}}}(u,v),&\text{if }(u,v)\in Z\\
        0,&\text{otherwise}.
    \end{cases}
    \end{equation*}
    
    On the way to establish \eqref{eq:a=c=1_PretentiousProofGoal2} we first show that
    \begin{equation*}
        \begin{gathered}
        \mathcal{L}\Bigg(\prod_{j=1}^d\psi\Big(\tilde A^{f_j}(m,n)\Big)\psi\Big(\tilde B_Q^{f_j}(m,n)\Big) \prod_{j=1}^{d-1}\psi\left(\tilde C^{f_j,f_{j+1}}_Q(m,n)\right)\Bigg)\nonumber\\
        \geq
        \mathcal{L}\Bigg(\prod_{j=1}^d\psi\Big(\tilde A^{f_j}(m,n)\Big)\rho\Big(\tilde B_Q^f(m,n),\tilde C^{f}_Q(m,n)\Big)\Bigg).
        \end{gathered}
    \end{equation*}
    Since $\mathcal{L}$ is sub-additive, writing $\gamma:= \rho-\psi^{\otimes^{2d-1}}$, it suffices to prove that
    \begin{equation}\label{eq:a=c=1_PretentiousProofGoal4}
        \mathcal{L}\Bigg(\prod_{j=1}^d\psi\left(\tilde A^{f_j}(m,n)\right)\gamma\Big(\tilde B^{f}_Q(m,n),\tilde C^{f}_Q(m,n)\Big)\Bigg)=0.
    \end{equation}
    Let $\mathcal{L}^*$ be short for
    \begin{equation}\label{eq:defL*}
        \lim_{K\to\infty} \lim_{N\to\infty} \E_{Q\in\Phi_K}\Eh,
    \end{equation}
    which we use only when the limits involved all exist.
    \begin{proof}[Proof of \eqref{eq:a=c=1_PretentiousProofGoal4}]
    \renewcommand\qedsymbol{$\blacktriangle$}
    Note that $\gamma$ only has frequencies in $\Z^{2d-1}\setminus{Z}$, i.e., $\hat\gamma(s)=0$ whenever $s\in Z$. 
    Applying \cref{lem:multidimFejer} with $l=2d-1$, the convolutions $\gamma\ast F_r$, where $F_r$ are the Fej\'er kernels, are trigonometric polynomials in $(\S^1)^{2d-1}$ that approximate $\gamma$ in the supremum norm as $r\to\infty$. 
    It then suffices to prove that for every $r\in\N$,
    $$\mathcal{L}^*\Bigg(\prod_{j=1}^d\psi\Big(\tilde A^{f_j}(m,n)\Big)\big(\gamma\ast F_r\big)\Big(\tilde B_Q^f(m,n),\tilde C^{f}_Q(m,n)\Big)\Bigg)=0.$$
    As each $\gamma\ast F_r$ is a finite linear combination of powers $z^s$ with $s\in\Z^{2d-1}\setminus Z$ and ${\mathcal L}^*$ is linear, it suffices to show that, for any $s=(s_1,s_2)\in\Z^{2d-1}\setminus Z$,
    \begin{equation}\label{eq:a=c=1_PretentiousProofGoal4.1}
        \mathcal{L}^*\Bigg(\prod_{j=1}^d\psi\Big(\tilde A^{f_j}(m,n)\Big)\cdot\Big(\tilde B_Q^f(m,n)\Big)^{s_1}\Big(\tilde C^{f}_Q(m,n)\Big)^{s_2}\Bigg)=0.
    \end{equation}
    Next note that $\tilde B_Q^{f_j}(m,n)=Q^{it_j}\overline{f_j(Q)}\times($terms that do not depend on $Q)$, and similarly $\tilde C^{f_j,f_{j+1}}_Q(m,n)=Q^{2i(t_j-t_{j+1})}\times($terms that do not depend on $Q)$.
    When $s\notin Z$ we deduce that the product $\Big(\tilde B_Q^f(m,n)\Big)^{s_1}\Big(\tilde C^{f}_Q(m,n)\Big)^{s_2}$ contains a non-trivial function of $Q$ as a factor.
    This implies, via \cref{lem:folnervanish}, that the average over $Q$ vanishes, and hence establishes \eqref{eq:a=c=1_PretentiousProofGoal4.1}.
    \end{proof}

    In view of \eqref{eq:a=c=1_PretentiousProofGoal4}, we have reduced \eqref{eq:a=c=1_PretentiousProofGoal2} to the following
    \begin{equation}\label{eq:a=c=1_PretentiousProofGoal5}
        \mathcal{L}\Bigg(\prod_{j=1}^d\psi\left(\tilde A^{f_j}(m,n)\right)\rho\Big(\tilde B_Q^f(m,n),\tilde C^{f}_Q(m,n)\Big)\Bigg)>0,
    \end{equation}
    Roughly speaking, we have reduced matters to those frequencies in $Z$. As far as these frequencies are concerned, $f^s$ behaves as an Archimedean character, so we will be able to replace $\tilde{B}_Q^f(m,n)$ and $\tilde C^{f}_Q(m,n)$ with simpler expressions. 
    To this end, let
    \begin{align*}
        \dbtilde{B}^f(m,n):= \Bigg(&\left(\frac{m^2+bn^2}{2\sqrt{a}mn}\right)^{it_1}\exp\big(G_{P_z}(f_1;K,N)-F(f_1;K,N)\big),\dots,\\
        &\left(\frac{m^2+bn^2}{2\sqrt{a}mn}\right)^{it_d}\exp\big(G_{P_z}(f_d;K,N)-F(f_d;K,N)\big)\Bigg),
    \end{align*}
    $$\dbtilde{C}^f(K,N):= \Big(\exp\big(G_{P_z}(f_1;K,N)-G_{P_z}(f_2;K,N)),\dots,\exp\big(G_{P_z}(f_{d-1};K,N)-G_{P_z}(f_d;K,N)\big)\Big).$$
    The next step is to show that
    \begin{equation}\label{eq:a=c=1_PretentiousProofGoal5.1}
        \mathcal{L}^*\Bigg(\prod_{j=1}^d\psi\left(\tilde A^{f_j}(m,n)\right)\bigg(\rho\Big(\tilde{B}_Q^f(m,n),\tilde C^{f}_Q(m,n)\Big)-\rho\Big(\dbtilde{B}^f(m,n),\dbtilde{C}^f(K,N)\Big)\bigg)\Bigg)=0.
    \end{equation}
    \begin{proof}[Proof of \eqref{eq:a=c=1_PretentiousProofGoal5.1}]
    \renewcommand\qedsymbol{$\blacktriangle$}
    Recall that $\hat\rho(s)=0$ unless $s\in Z$. 
    Applying \cref{lem:multidimFejer} with $l=2d-1$, it suffices to prove \eqref{eq:a=c=1_PretentiousProofGoal5.1} when $\rho$ is replaced by powers $z^s$ with $s=(s_1,s_2)\in Z$. 
    Since $s\in Z$, it follows that $f^{s_1}$ is an Archimedean character.
    On the other hand, (from the definition of $t_j$) we know that $f^{s_1}$ pretends to be the product of $n^{i\sum_{j=1}s_{1,j}t_j}$ and a Dirichlet character.
    This can only happen if $f^{s_1}(n)=n^{i\sum_{j=1}s_{1,j}t_j}$ and hence it follows that $\sum_{j=1}^{d-1}s_{2,j}(t_j-t_{j+1})=0$. 
    It now follows directly that for every $Q,m,n$ we have
    $$\Big(\tilde{B}_Q^f(m,n)\Big)^{s_1}\Big(\tilde C^{f}_Q(m,n)\Big)^{s_2} = \Big(\dbtilde{B}^f(m,n)\Big)^{s_1}\Big(\dbtilde C^{f}(K,N)\Big)^{s_2}.$$
    \end{proof}

    In view of \eqref{eq:a=c=1_PretentiousProofGoal5.1} we have reduced \eqref{eq:a=c=1_PretentiousProofGoal5} to
    \begin{equation}\label{eq:a=c=1_PretentiousProofGoal6}
        \mathcal{L}\Bigg(\prod_{j=1}^d\psi\left(\tilde A^{f_j}(m,n)\right)\rho\Big(\dbtilde B^{f}(m,n),\dbtilde{C}^{f}(K,N)\Big)\Bigg)>0.
    \end{equation}

    To prove \eqref{eq:a=c=1_PretentiousProofGoal6} we will first get rid of the Archimedean characters involved in the expression, making use of \cref{cor:weightforpolynomials}. 
    To this end, let  
    \begin{equation*}
        \begin{gathered}
        \dbtilde{A}^{f_j}(K,N):= \exp\Big(G_{P_z}(f_j;K,N)-2F(f_j;K,N)\Big),\\
        \check{B}^f(K,N):= \Big(\exp\big(G_{P_z}(f_1;K,N)-F(f_1;K,N)\big),\dots,\exp\big(G_{P_z}(f_d;K,N)-F(f_d;K,N)\big)\Big).\\
        \end{gathered}
    \end{equation*}
    Let $\delta>0$ be a small parameter to be chosen later (depending only on $f_1,\dots,f_d$ and $\psi$), then let $\epsilon>0$ be sufficiently small depending on $\delta$ and let $W\subset\N^2$ be the set given by \cref{cor:weightforpolynomials} such that whenever $(m,n)\in W$ we have
    \begin{equation}\label{eq:a=c=1archimedianestimate}
    \big|(m^2+bn^2)^{it_j}-(m^2-bn^2)^{it_j}\big|+ \big|(m^2+bn^2)^{it_j}-(2\sqrt{a}mn)^{it_j}\big|<\epsilon
    \end{equation}
    for all $j\in\{1,\dots,d\}$. 
    Since $\rho$ and $\psi$ are continuous functions on compact sets, they are uniformly continuous. 
    Hence, if $\epsilon$ is sufficiently small and using \eqref{eq:a=c=1archimedianestimate}, 
    it follows that for any $(m,n)\in W$ and $j\in\{1,\dots,d\}$,
    $$\left|\psi\left(\tilde A^{f_j}(m,n)\right)-\psi\left(\dbtilde{A}^{f_j}(K,N)\right) \right|<\frac\delta{2d}.$$
    A similar estimate can be used to replace $\dbtilde B$ with $\check{B}$, and hence we can then estimate \eqref{eq:a=c=1_PretentiousProofGoal6} from below by
    \begin{equation}\label{eq:a=c=1_PretentiousProofGoal7}
        \underline{d}(W)\cdot\mathcal{L}\Bigg(\prod_{j=1}^d\psi\Big(\dbtilde{A}^{f_j}(K,N)\Big)\rho\Big(\check{B}^f(K,N),\dbtilde{C}^f(K,N)\Big)\Bigg) -\delta.
    \end{equation}
    Note that the expression inside $\mathcal{L}$ in \eqref{eq:a=c=1_PretentiousProofGoal7} does not depend on $m,n$ or $Q$. Therefore we can replace $\mathcal{L}$ with $\limsup_{K\to\infty}\limsup_{N\to\infty}$.
    Since each of $F$ and $G_{P_z}$ are sums of the form $\sum_{K\leq p\leq N}x(p)$ we can invoke \cref{lem:sequencesandseries} for $\R^{2d-1}$ (and using the fact that both $\psi$ and $\rho$ are continuous)
    to deduce that \eqref{eq:a=c=1_PretentiousProofGoal7} is at least
    $$\underline{d}(W)\cdot\psi(1)^d\cdot\rho(1,\dots,1)-\delta>0$$
    for sufficiently small $\delta$. This establishes \eqref{eq:a=c=1_PretentiousProofGoal6} and thus finishes the proof.
\end{proof}

\subsection{Case \texorpdfstring{$a+b=c$}{a+b=c} for pretentious functions}
In this section we establish the case of \cref{thm_jpforabc} where $a+b=c$ and all the functions are pretentious. The proof is similar to that of \cref{prop:a=c=1PretentiousCase} with some crucial differences.
To highlight these differences, we omit several of the details that were already explained in the proof of \cref{prop:a=c=1PretentiousCase}.

\begin{theorem}\label{prop:a+b=cPretentiousCase}
    Let $a,b\in\N$ be perfect squares, let $d\in\N$ and let $P_x,P_y,P_z$ be given by \eqref{eq:a+b=c_param}, namely
    \begin{equation*}
    \begin{gathered}
        P_x=P_x(m,n)=m^2-2bmn-abn^2,\qquad\qquad\qquad\qquad P_y=P_y(m,n)=m^2+2amn-abn^2,\\
        P_z=P_z(m,n)=m^2+abn^2.
    \end{gathered}
    \end{equation*}
    Let $f_1,\dots,f_d\in\M$ be pretentious. Then $f_1,\dots,f_d$ are JP w.r.t. $P_x,P_y,P_z$.
\end{theorem}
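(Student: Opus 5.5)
We follow the proof of \cref{prop:a=c=1PretentiousCase} closely. Write $f_j\sim\chi_j\cdot n^{it_j}$ and fix $\psi$. By \cref{lem:ABClemma} it suffices to show that the averaged expression
$$\mathcal{L}\Big(\prod_{j=1}^d\psi\big(A^{f_j}_Q\big)\psi\big(B^{f_j}_Q\big)\prod_{j=1}^{d-1}\psi\big(C^{f_j,f_{j+1}}_Q\big)\Big)$$
is positive, with $\mathcal{L}$ as in \eqref{eq:defL}.

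\textbf{Concentration step.} The first key difference is that now \emph{both} $P_x$ and $P_y$ split into two linear forms over $\Z$. Using $\alpha=\sqrt{bc}$ and $\gamma=\sqrt{ac}$, we have
$$P_x^Q=(Q(m-bn+\alpha n)+1)(Q(m-bn-\alpha n)+1),$$
and similarly for $P_y^Q$; here $P^Q(m,n)=P(Qm+1,Qn)$, so each linear factor has the form $Q(l_1m+l_2n)+1$. We apply \cref{cor:redConcentration} to each of the four linear factors and \cref{cor:irrConcentration} to $P_z$, which is irreducible with $P_z(1,0)=1$. This replaces the quantities by
\begin{align*}
\tilde A^{f_j}&=\Big(\tfrac{P_z(m,n)}{P_x(m,n)}\Big)^{it_j}\exp\big(G_{P_z}(f_j)-2F(f_j)\big),\\
\tilde B^{f_j}&=\Big(\tfrac{P_z(m,n)}{P_y(m,n)}\Big)^{it_j}\exp\big(G_{P_z}(f_j)-2F(f_j)\big),\\
\tilde C^{f_j,f_{j+1}}_Q&=P_z(Qm,Qn)^{i(t_j-t_{j+1})}\exp\big(G_{P_z}(f_j)-G_{P_z}(f_{j+1})\big).
\end{align*}
In $\tilde A^{f_j}$ and $\tilde B^{f_j}$ the factors $Q^{2it_j}$ cancel between numerator and denominator. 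Crucially, unlike the case $a=c$, there is now no leftover term $\overline{f_j(2\sqrt a Qn)}$. This simplifies matters, since the only $Q$-dependence left sits in $\tilde C$, namely through $Q^{2i(t_j-t_{j+1})}$. As before, the error terms vanish under $\mathcal{L}$ by uniform continuity of $\psi$.

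\textbf{Fourier reduction on the $C$ coordinates.} We expand $\psi^{\otimes(d-1)}$ in the $\tilde C$ coordinates and project onto the group
$$Z=\Big\{s\in\Z^{d-1}:\ \sum_j s_j(t_j-t_{j+1})=0\Big\},$$
obtaining a function $\rho$. For frequencies $s\notin Z$, the average over $Q\in\Phi_K$ of $Q^{2i\sum_j s_j(t_j-t_{j+1})}$ vanishes by \cref{lem:folnervanish}, applied after Fejér approximation via \cref{lem:multidimFejer}. For $s\in Z$ the Archimedean factor of $\tilde C$ disappears entirely. Hence $\rho(\tilde C_Q)=\rho(\dbtilde C(K,N))$ depends only on $K$ and $N$, and the inequality $\psi^{\otimes(d-1)}\ge$ (in the averaged sense) $\rho$ holds as before. (Alternatively, one could simply use a Bohr-type argument, but we follow the paper's scheme.)

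\textbf{Archimedean and divergent-series parts.} Next we use \cref{cor:weightforpolynomials}, whose hypothesis in the case $a+b=c$ was already verified (all ratios tend to $1$). It provides a set $W$ of positive lower density on which $(P_z/P_x)^{it_j}$ and $(P_z/P_y)^{it_j}$ are within $\epsilon$ of $1$. Restricting to $W$ bounds the expression below by
$$\underline d(W)\,\limsup_{K}\limsup_N\ \prod_j\psi\big(e^{G_{P_z}(f_j)-2F(f_j)}\big)^2\,\rho\big(\dbtilde C(K,N)\big)-\delta.$$
Both exponents are partial sums $\sum_{K<p\le N}x(p)$ with $x(p)\in\R^{2d-1}$ (after taking imaginary parts). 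The real parts are harmless: they tend to $0$ because $\D(f_j,\chi_jn^{it_j})<\infty$ and $\omega_{P_z}\le2$, so $\mathfrak{R}\,G\to0$ and $\mathfrak{R}\,F\to0$ as $K\to\infty$. By \cref{lem:sequencesandseries}, for large $K$ there are $N$ with all phases near $1$. This gives the lower bound $\underline d(W)\psi(1)^{2d}\rho(1)-\delta>0$.

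\textbf{Main obstacle.} The main point of care is that the liminf in \cref{lem:sequencesandseries} must be applied \emph{simultaneously} to all coordinates, namely the $G_{P_z}$ and $F$ sums for every $j$. It also has to be applied after the $W$-restriction has removed all $(m,n)$-dependence, which is why the Archimedean parts must be cleared first. A further issue is checking that the Fourier step is legitimate even though $\psi$ is evaluated at quantities that themselves depend on $N$. This mirrors the $a=c$ proof and should go through verbatim.
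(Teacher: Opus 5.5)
Your proposal is correct and follows essentially the same route as the paper's proof. The steps line up one for one: concentration estimates on the four linear factors of $P_x^Q,P_y^Q$ and on $P_z$, which leave $Q$-dependence only in $\tilde C_Q$; projection onto $Z=\{s:\sum_j s_j(t_j-t_{j+1})=0\}$ in the $\tilde C$ coordinates alone; restriction to the set $W$ of \cref{cor:weightforpolynomials}; and \cref{lem:sequencesandseries}. Your explicit remark that the real parts of $G_{P_z}$ and $F$ tend to $0$ as $K\to\infty$ fills in a detail the paper leaves implicit.
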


\begin{proof}
 Suppose that $f_j\sim\chi_j\cdot n^{it_j}$ for some Dirichlet character $\chi_j$ and $t_j\in\R$.
    Let $\psi:\S^1\to[0,1]$ be continuous with $\psi(1)>0$. By \cref{lem:ABClemma}, it suffices to prove
    \begin{equation}\label{eq:a+b=c_PretentiousProofGoal}
    \mathcal{L} \Bigg(\prod_{j=1}^d\psi\Big(A^{f_j}_Q(m,n)\Big)\psi\Big(B^{f_j}_Q(m,n)\Big) \prod_{j=1}^{d-1}\psi\Big(C^{f_j,f_{j+1}}_Q(m,n)\Big)\Bigg)>0,
    \end{equation}
    where $A_Q^{f_j},B_Q^{f_j},C_Q^{f_j,f_{j+1}}$ are given by \eqref{eq:ABCdef} and $\mathcal{L}$ by \eqref{eq:defL}

    With a view towards applying the concentration estimates, we let
    $$\tilde{A}^{f_j}(m,n):= \left(\frac{m^2+abn^2}{m^2-2bmn-abn^2}\right)^{it_j}\exp{\Big(G_{P_z}(f_j;K,N)-2F(f_j;K,N)\Big)},$$
    $$\tilde{B}^{f_j}(m,n):= \left(\frac{m^2+abn^2}{m^2+2amn-abn^2}\right)^{it_j}\exp{\Big(G_{P_z}(f_j;K,N)-2F(f_j;K,N)\Big)},$$
    $$\tilde{C}^{f_j,f_{j+1}}_Q(m,n):= \big((Qm)^2+ab(Qn)^2\big)^{i(t_j-t_{j+1})}\exp\big(G_{P_z}(f_j;K,N)-G_{P_z}(f_{j+1};K,N)\big).$$
    Arguing as in the previous proof, in view of \cref{cor:irrConcentration} and \cref{cor:redConcentration} we have
    \begin{equation*}
        \begin{gathered}
            \mathcal{L}\Big|\psi\big(A_Q^{f_j}(m,n)\big)-\psi\big(\tilde{A}^{f_j}(m,n)\big)\Big|=0, \qquad\mathcal{L}\Big|\psi\big(B_Q^{f_j}(m,n)\big)-\psi\big(\tilde{B}^{f_j}(m,n)\big)\Big|=0,\\
            \mathcal{L}\Big|\psi\big(C_Q^{f_j,f_{j+1}}(m,n)\big)-\psi\big(\tilde{C}^{f_j,f_{j+1}}_Q(m,n)\big)\Big|=0.
        \end{gathered}
    \end{equation*}
    Inserting these estimates into \eqref{eq:a+b=c_PretentiousProofGoal}, we reduce it to
    \begin{equation}\label{eq:a+b=c_PretentiousProofGoal2}
        \mathcal{L}\Bigg(\prod_{j=1}^d\psi\Big(\tilde A^{f_j}(m,n)\Big)\psi\Big(\tilde B^{f_j}(m,n)\Big) \prod_{j=1}^{d-1}\psi\Big(\tilde C^{f_j,f_{j+1}}_Q(m,n)\Big)\Bigg)>0.
    \end{equation}

    To write more compactly we will denote
    $$\tilde C^{f}_Q(m,n):= \Big(\tilde C^{f_1,f_2}_Q(m,n),\dots,\tilde C^{f_{d-1},f_d}_Q(m,n)\Big).$$
    We consider the auxiliary subgroup $Z\subset\Z^{d-1}$ and the function $\rho:(\S^1)^{d-1}\to[0,1]$ defined by
    $$Z=\Bigg\{s\in\Z^{d-1}: {\sum_{j=1}^{d-1}s_j(t_j-t_{j+1})}=0\Bigg\},\quad\text{ and }\quad\rho(z)=\int_{Z^\perp}\psi^{\otimes^{d-1}}(zz')d z',$$
    where $Z^\perp:=\big\{z\in (\S^1)^{d-1}:(\forall s\in Z),\ z^s=1\big\}$ is the dual of the group $Z$. 
    The Fourier transform of $\rho$ is
    \begin{equation*}
        \widehat{\rho}(u,v)=
    \begin{cases}
        \widehat{\psi^{\otimes^{d-1}}}(u,v),&\text{if }(u,v)\in Z\\
        0,&\text{otherwise}.
    \end{cases}
    \end{equation*}
    
    On the way to establish \eqref{eq:a+b=c_PretentiousProofGoal2}, we first show that
    \begin{equation}\label{eq:a+b=c_PretentiousProofGoal4}
        \mathcal{L}\Bigg(\prod_{j=1}^d\psi\Big(\tilde A^{f_j}(m,n)\Big)\psi\Big(\tilde B^{f_j}(m,n)\Big)\gamma\Big(\tilde C^{f}_Q(m,n)\Big)\Bigg)=0,
    \end{equation}
    where  $\gamma:= \rho-\psi^{\otimes^{d-1}}$. The proof of \eqref{eq:a+b=c_PretentiousProofGoal4} is identical to the proof of \eqref{eq:a=c=1_PretentiousProofGoal4} above, with the only difference being that here Fejér's theorem is applied in $(\S^1)^{d-1}$, and so we have omitted the details.
    
    In view of \eqref{eq:a+b=c_PretentiousProofGoal4}, we have reduced \eqref{eq:a+b=c_PretentiousProofGoal2} to
    \begin{equation}\label{eq:a+b=c_PretentiousProofGoal5}
        \mathcal{L}\Bigg(\prod_{j=1}^d\psi\Big(\tilde A^{f_j}(m,n)\Big)\psi\Big(\tilde B^{f_j}(m,n)\Big)\rho\Big(\tilde C^{f}_Q(m,n)\Big)\Bigg)>0.
    \end{equation}
    Having reduced matters to frequencies in $Z$, we now want to replace $\tilde{C}_Q^{f}(K,N)$ with a simpler expression. To that end, let
    $$\dbtilde{C}^{f}(K,N):= \Big(\exp\big(G_{P_z}(f_1;K,N)-G_{P_z}(f_2;K,N)\big),\dots,\exp\big(G_{P_z}(f_{d-1};K,N)-G_{P_z}(f_d;K,N)\big)\Big).$$

    Let $\mathcal{L}^*$ be given by \eqref{eq:defL*}. The first step to obtain \eqref{eq:a+b=c_PretentiousProofGoal5} is to show that
    \begin{equation}\label{eq:a+b=c_PretentiousProofGoal5.1}
        \mathcal{L}^*\Bigg(\prod_{j=1}^d\psi\Big(\tilde A^{f_j}(m,n)\Big)\psi\Big(\tilde B^{f_j}(m,n)\Big)\cdot\Big(\rho\Big(\tilde C^{f}_Q(m,n)\Big)-\rho\Big(\dbtilde{C}^{f}(K,N)\Big)\Big)\Bigg)=0.
    \end{equation}

    Indeed, since $\hat{\rho}(s)=0$ unless $s\in Z$, after applying \cref{lem:multidimFejer} with $l=d-1$ it suffices to establish \eqref{eq:a+b=c_PretentiousProofGoal5.1} when $\rho$ is replaced by powers $z^s$ with $s\in Z$. 
    But for any $s\in Z$, by definition we have $\Big(\tilde C^{f}_Q(m,n)\Big)^s = \Big(\dbtilde C^{f}(K,N)\Big)^s,$
    which confirms \eqref{eq:a+b=c_PretentiousProofGoal5.1}.
    
    In view of \eqref{eq:a+b=c_PretentiousProofGoal5.1} we have reduced \eqref{eq:a+b=c_PretentiousProofGoal5} to
    \begin{equation}\label{eq:a+b=c_PretentiousProofGoal6}
        \mathcal{L}\Bigg(\prod_{j=1}^d\psi\big(\tilde A^{f_j}(m,n)\Big)\psi\Big(\tilde B^{f_j}(m,n)\Big)\rho\Big(\dbtilde{C}^{f}_Q(K,N)\Big)\Bigg)>0.
    \end{equation}

    Let $\epsilon>0$ be a small parameter to be chosen later (depending only on $f_1,\dots,f_d$ and $\psi$) and let $W\subset\N^2$ be the set given by \cref{cor:weightforpolynomials} such that whenever $(m,n)\in W$ we have
    \begin{equation}\label{eq:archimedianestimate}
    \big|(m^2+abn^2)^{it_j}-(m^2-2bmn-abn^2)^{it_j}\big|+ \big|(m^2+abn^2)^{it_j}-(m^2+2amn-abn^2)^{it_j}\big|<\epsilon
    \end{equation}
    for all $j\in\{1,\dots,d\}$. 
    Hence, for every $\delta>0$, choosing $\epsilon>0$ sufficiently small and using \eqref{eq:archimedianestimate} we can then estimate \eqref{eq:a+b=c_PretentiousProofGoal6} from below by
    \begin{equation}\label{eq:a+b=c_PretentiousProofGoal7}
        \underline{d}(W)\cdot\mathcal{L}\Bigg(\prod_{j=1}^d\psi\Big(\dbtilde{A}^{f_j}(K,N)\Big)\psi\Big(\dbtilde{B}^{f_j}(K,N)\Big)\rho\Big(\dbtilde{C}^f(K,N)\Big)\Bigg) -\delta,
    \end{equation}
    where $\underline{d}(W)>0$ by \cref{cor:weightforpolynomials} and
    \begin{equation*}
        \begin{gathered}
            \dbtilde{A}^{f_j}(K,N):= \exp{\Big(G_{P_z}(f_j;K,N)-2F(f_j;K,N)\Big)},\\
            \dbtilde{B}^{f_j}(K,N):= \exp{\Big(G_{P_z}(f_j;K,N)-2F(f_j;K,N)\Big)}.
        \end{gathered}
    \end{equation*}
    Invoking \cref{lem:sequencesandseries} for $\R^{2d-1}$ establishes \eqref{eq:a+b=c_PretentiousProofGoal6}.
\end{proof}

\subsection{Case \texorpdfstring{$a=c$}{a=c}}

In this section we prove \cref{thm_jpforabc} in the case $a=c$, for arbitrary functions $f_1,\dots,f_d\in\M$.
To this end, we use the following sufficient criterion for JP, which amounts to consider the average on $(m,n)$ on a grid $(Qm+1,Qn)$, and then average multiplicatively over all $Q$.

\begin{lemma}\label{lem:averageInQReduction}
    Let $f_1,\dots,f_d\in\M$ and $P_x,P_y,P_z$ be given by \eqref{eq:a=c=1_param} or \eqref{eq:a+b=c_param}. If for every continuous $\psi:\S^{1}\to[0,1]$ with $\psi(1)>0$
    \begin{equation}\label{eq_lem:averageInQReduction}
    \lim_{K\to\infty}\limsup_{N\to\infty} \E_{Q\in\Phi_K}\Eh\Es \prod_{j=1}^d\psi\Big(f_j\big(kP_x^Q(m,n)\big)\Big)\cdot \psi\Big(f_j\big(kP_y^Q(m,n)\big)\Big)\cdot \psi\Big(f_j\big(kP_z^Q(m,n)\big)\Big)>0,
    \end{equation}
    then $f_1,\dots,f_d$ are JP w.r.t. $a,b,c$.
\end{lemma}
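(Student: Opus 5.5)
We will show that the hypothesis \eqref{eq_lem:averageInQReduction} forces the left-hand side of \eqref{eq_jpdef} to be positive. The idea is to see the average over the grids $(Qm+1,Qn)$ as a positive-proportion piece of the full average over $\Delta_N$. Fix $\psi$ and write
$$\Theta(m,n):=\Es\prod_{j=1}^d\psi\big(f_j(kP_x(m,n))\big)\psi\big(f_j(kP_y(m,n))\big)\psi\big(f_j(kP_z(m,n))\big)\in[0,1].$$
Then the quantity in \eqref{eq_lem:averageInQReduction} is $\E_{Q\in\Phi_K}\Eh\Theta(Qm+1,Qn)$, since $P_w^Q(m,n)=P_w(Qm+1,Qn)$. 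Let $\delta>0$ denote the limit in \eqref{eq_lem:averageInQReduction}. Choose $K$ large, and then a sequence $N_\ell\to\infty$ along which
$$\E_{Q\in\Phi_K}\E_{\Delta_{N_\ell}}\Theta(Qm+1,Qn)>\delta/2.$$
Since $\Phi_K$ is a finite set, pigeonholing gives a single $Q\in\Phi_K$ with $\E_{\Delta_{N_\ell}}\Theta(Qm+1,Qn)>\delta/2$ for infinitely many $\ell$. Passing to that subsequence, $Q$ is fixed from now on.

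Next we compare with the average over $\Delta_{M}$ for $M=QN_\ell+1$. The map $(m,n)\mapsto(Qm+1,Qn)$ is injective, and it sends $\Delta_{N}$ into $[QN+1]^2$. The condition $m>(a+2b)n$ implies $Qm+1>(a+2b)Qn$, so the image lies inside $\Delta_{QN+1}$. Since $\Theta\ge0$, we get
$$\E_{\Delta_{QN+1}}\Theta(m',n')\ \ge\ \frac{|\Delta_N|}{|\Delta_{QN+1}|}\,\E_{\Delta_N}\Theta(Qm+1,Qn).$$
Both $|\Delta_N|$ and $|\Delta_{QN+1}|$ are comparable to $N^2$ and $(QN)^2$ times the same constant $1/(2(a+2b))$. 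Hence the ratio tends to $Q^{-2}$, which gives
$$\limsup_{M\to\infty}\Eh\Theta\ \ge\ \frac{\delta}{2Q^2}>0.$$
This is exactly the JP condition \eqref{eq_jpdef}.

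The only delicate point is that the $\Es$ limit is taken along a fixed subsequence $(\Psi_K)$ on which all the required limits exist. This is handled by fixing the ultrafilter, or subsequence, once and for all, so that $\Theta$ is a well-defined function of $(m,n)$ used consistently in both expressions. The limit in \eqref{eq_lem:averageInQReduction} may be read as $\liminf$ in $K$; positivity for one large $K$ suffices.
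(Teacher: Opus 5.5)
Your proof is correct and follows the same route as the paper. The paper uses sub-additivity of $\limsup$ to move the finite average over $Q\in\Phi_K$ outside and extract a single $Q$ with positive inner $\limsup$, which is what your pigeonhole step does; it then invokes the positive density of the grid $\{(Qm+1,Qn)\}$, which you make explicit through the injection $\Delta_N\to\Delta_{QN+1}$ and the ratio $|\Delta_N|/|\Delta_{QN+1}|\to Q^{-2}$.
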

\begin{proof}
    Since $\limsup$ is sub-additive, the left-hand side in \eqref{eq_lem:averageInQReduction} is smaller or equal than
    $$\limsup_{K\to\infty}\E_{Q\in\Phi_K}\limsup_{N\to\infty}\ \Eh\Es \prod_{j=1}^d\psi\Big(f_j\big(kP_x^Q(m,n)\big)\Big)\cdot \psi\Big(f_j\big(kP_y^Q(m,n)\big)\Big)\cdot \psi\Big(f_j\big(kP_z^Q(m,n)\big)\Big).$$
    Therefore, \eqref{eq_lem:averageInQReduction} implies that there exists $Q$ for which the inner limsup in $N$ is positive.
    Since the grid $\{(Qm+1,Qn):m,n\in\N\}$ has positive density in $\N^2$, the JP property follows.
\end{proof}

We are now ready to prove the \cref{thm_jpforabc} in the case $a=c$.
In view of Lemmas \ref{lem:subgroupGenJP} and \ref{lem:structuralResult}, it suffices to establish the following.
\begin{theorem}\label{thm:general_a=c}
    Let $g_1,\dots,g_d,h_1,\dots,h_d\in\M$ be such that each $h_j(n)\sim n^{it_j}$ and for every $s=(s_1,\dots,s_d)\in\Z^d$, $g^s$ is either aperiodic or a Dirichlet character. 
    Let also $a,b\in\N$ be perfect squares and let $P_x,P_y,P_z$ be given by \eqref{eq:a=c=1_param}, namely
    $$P_x=m^2-bn^2,\qquad P_y=2\sqrt{a}mn,\qquad P_z=m^2+bn^2.$$
    Then $g_1,\dots,g_d,h_1,\dots,h_d$ are JP w.r.t. $a,b,a$.
\end{theorem}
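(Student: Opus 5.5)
We will verify the criterion of \cref{lem:averageInQReduction}. Write $f=(g_1,\dots,g_d,h_1,\dots,h_d)$ and fix a continuous $\psi:\S^1\to[0,1]$ with $\psi(1)>0$. The product inside \eqref{eq_lem:averageInQReduction} is a continuous function $\Psi=\psi^{\otimes 6d}$ of the $6d$ values $f_j(kP_w^Q(m,n))$, $w\in\{x,y,z\}$. We expand $\Psi$ via \cref{lem:multidimFejer}. Up to an error that is uniformly small, $\Psi$ equals a trigonometric polynomial $\Psi*F_r^{(6d)}$ with nonnegative Fourier coefficients weighted by Fejér factors. Each character of $(\S^1)^{6d}$ evaluated at our point has the form
\[
u\big(kP_x^Q\big)\,v\big(kP_y^Q\big)\,w\big(kP_z^Q\big),\qquad u=g^{s_x}h^{r_x},\ v=g^{s_y}h^{r_y},\ w=g^{s_z}h^{r_z},
\]
with $s_\bullet,r_\bullet\in\Z^d$. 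By the hypothesis on $g$, and since products of $h$'s pretend to be $n^{it}$, each of $u,v,w$ is either aperiodic or of the form (Dirichlet character)$\times$(function pretending to be $n^{it}$), so in particular pretentious.

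\textbf{Step 1 (aperiodic frequencies vanish).} If one of $u,v,w$ is aperiodic, \cref{cor:aperiodicJP}, applied for each fixed $Q$, shows that the $(m,n,k)$-average of this character tends to $0$ as $N\to\infty$. Averaging over $Q\in\Phi_K$ preserves this, since $\Phi_K$ is finite for fixed $K$. Thus in the limit only frequencies $(s_\bullet,r_\bullet)$ with $g^{s_x},g^{s_y},g^{s_z}$ all Dirichlet characters survive.

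\textbf{Step 2 (these frequencies form a subgroup).} Let $S\subset\Z^{3d}\times\Z^{3d}$ be the set of frequencies for which $g^{s_x},g^{s_y},g^{s_z}$ are Dirichlet characters. Then $S$ is a subgroup, and its first component is the subgroup of $s$ for which $g^s$ is pretentious. As in the proof of \cref{prop:a=c=1PretentiousCase}, we introduce the projection
\[
\rho(z)=\int_{S^\perp}\Psi(zz')\,dz'.
\]
This $\rho$ is continuous, takes values in $[0,1]$, and satisfies $\rho(\mathbf 1)=\int_{S^\perp}\Psi>0$, because $S^\perp$ is a closed subgroup containing $\mathbf 1$ and $\Psi>0$ near $\mathbf 1$. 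Its Fourier support lies in $S$. By Step 1 and Fejér approximation, the expression in \eqref{eq_lem:averageInQReduction} equals the same average with $\Psi$ replaced by $\rho$. The key observation is that $\rho$ is the composition of a continuous function with the values of the finitely many functions $\tilde f=(\chi_1,\dots,\chi_e,h_1,\dots,h_d)$, where $\chi_1,\dots,\chi_e$ generate the finitely generated group $\{g^s\text{ pretentious}\}$. Concretely, $\rho(z)=\tilde\rho(\text{values of }\tilde f)$, with $\tilde\rho$ continuous and $\tilde\rho(\mathbf 1)=\rho(\mathbf 1)>0$.

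\textbf{Step 3 (reduce to the pretentious case).} Now all functions in $\tilde f$ are pretentious. We need positivity of
\[
\lim_K\limsup_N\E_{Q\in\Phi_K}\Eh\Es\,\tilde\rho\big(\tilde f(kP_w^Q)\big).
\]
Since $\tilde\rho\ge c\,\phi^{\otimes}$ for some continuous $\phi$ with $\phi(1)>0$, this follows by rerunning the argument of \cref{prop:a=c=1PretentiousCase} for $\tilde f$: the concentration estimates of \cref{cor:irrConcentration} and \cref{cor:redConcentration}, the density set from \cref{cor:weightforpolynomials}, and \cref{lem:sequencesandseries}. The one difference is that we need the $Q$-averaged, shifted-grid form rather than the conclusion "JP". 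That proof already works with $\mathcal L$, which includes the average over $Q\in\Phi_K$ and the grid $(Qm+1,Qn)$, together with the $k$-average handled as in \cref{lem:ABClemma}.

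\textbf{Main obstacle.} The delicate point is making the exchange of limits rigorous. Step 1 gives vanishing for each fixed $Q$ and each fixed frequency, but we need it uniformly after Fejér truncation and before taking $\limsup_N$. This is fine because the truncation leaves finitely many frequencies and $\Phi_K$ is finite. A second delicate point is ensuring that the pretentious argument yields the specific limit in \eqref{eq_lem:averageInQReduction}, i.e.\ $\lim_K$ rather than $\limsup_K$. For this we would follow the proof of \cref{prop:a=c=1PretentiousCase}: there the lower bound $\underline d(W)\psi(1)^d\rho(1)-\delta$ is obtained for all large $K$ along a suitable subsequence of $N$. If that fails, we would replace $\lim_K$ by $\limsup_K$, which still suffices in the proof of \cref{lem:averageInQReduction}.
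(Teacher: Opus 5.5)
Your proposal is correct, at the same level of rigour as the paper, but it decomposes the problem differently. The paper projects onto the smaller group
$Z=\{(s_1,s_2,s_3): g^{s_1},g^{s_3}\text{ Dirichlet},\ g^{s_2}\equiv g^{s_1+s_2+s_3}\equiv 1\}$.
Besides the aperiodic frequencies, it therefore also kills two further families:
the frequencies with $g^{s_1+s_2+s_3}h^{s_4+s_5+s_6}\not\equiv 1$, using \cref{lem:folnervanish} in $k$;
and those where $g^{s_2}$ is a non-trivial Dirichlet character. The latter is its Case 3: concentration estimates for the $h$-factors expose a factor $(g^{s_2}h^{s_5})(Q)\,Q^{i(\cdot)}$, which the average over $Q\in\Phi_K$ annihilates.
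On $Z$ the $g$-part is then identically $C=\rho(\mathbf 1)$ for $Q\in\Phi_K$ with $K$ large, so only the $h_j\sim n^{it_j}$ are passed to \cref{prop:a=c=1PretentiousCase}.

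You stop after removing the aperiodic frequencies. This reduction holds for each fixed $K$, so you never need the limits in $\mathcal L^*$ to exist. You then hand the surviving Dirichlet characters $\chi_i$, together with the $h_j$, to the pretentious argument. There the same mechanism as in the paper's Case 3 disposes of them: a non-trivial Dirichlet part of $f^{s_1}$ contributes a factor $\overline{f^{s_1}(Q)}\,Q^{i(\cdot)}$ to $\tilde B^{f}_Q$, which the $Q$-average kills. The proof of \cref{prop:a=c=1PretentiousCase} is written for arbitrary $f_j\sim\chi_j n^{it_j}$, so this is legitimate. Your general step is shorter and avoids duplicating that argument. The cost is that you use the pretentious case with genuinely non-trivial characters, rather than only with functions pretending to be $n^{it}$.

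In both routes the final appeal is really to the $Q$-averaged, grid-shifted, $k$-averaged form of that proof, since the proposition's statement only gives JP. This needs the lower bound for the $k$-average in \cref{lem:ABClemma} to be uniform in $(Q,m,n)$. You correctly flag this, and the $\lim_K$ versus $\limsup_K$ issue.

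A minor point: to make $\rho=\tilde\rho\circ(\text{values of }\tilde f)$ a factorisation through a continuous surjection, take $\chi_i=g^{p_i}$ for a $\Z$-basis $p_1,\dots,p_e$ of $\{s\in\Z^d: g^s\text{ is a Dirichlet character}\}$. Then the map $z\mapsto (z_w^{p_i})_{i,w}$, extended by the identity on the $h$-coordinates, has kernel exactly $S^\perp$.
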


\begin{proof}
    Let $\mathcal{L}$ and $\mathcal{L}^*$ be short for
    $$\lim_{K\to\infty}\limsup_{N\to\infty} \E_{Q\in\Phi_K}\Eh\Es,\quad\text{ and }\quad\lim_{K\to\infty}\lim_{N\to\infty} \E_{Q\in\Phi_K}\Eh\Es,$$
    respectively (where we only use $\mathcal{L}^*$ when the limits involved exist).

    By \cref{lem:averageInQReduction}, our goal is to establish
    $$\mathcal{L} \Bigg(\prod_{j=1}^d\psi\big(g_j(kP_x^Q)\big)\psi\big(g_j(kP_y^Q)\big)\psi\big(g_j(kP_z^Q)\big)\psi\big(h_j(kP_x^Q)\big)\psi\big(h_j(kP_y^Q)\big)\psi\big(h_j(kP_z^Q)\big)\Bigg)>0.$$
    Consider the subgroup $Z\subset(\Z^d)^3$ and the function $\rho:(\S^1)^{3d}\to[0,1]$ defined by
    $$Z:=\big\{s=(s_1,s_2,s_3)\in\Z^{3d}:g^{s_1},g^{s_3}\text{ are Dirichlet characters and }g^{s_2}\equiv g^{s_1+s_2+s_3}\equiv 1\big\},$$
    $$\rho(z)=\int_{Z^\perp}(\psi\otimes\cdots\otimes\psi)(zz')\d z',$$
    where $Z^\perp= \{z\in (\S^1)^{3d}:\forall s\in Z\ z^s=1\}$ is the dual group of $Z$ in $(\S^1)^{3d}$. Then
    $$\widehat{\rho}(s)=
    \begin{cases}
        \widehat{\psi^{\otimes^{3d}}},&\text{if }s\in Z\\
        0,&\text{otherwise}.
    \end{cases}$$
    Since $\psi\geq 0$ and $(1,\dots,1)\in Z^\perp$, it follows that $C:=\rho(1,\dots,1)=\int_{Z^\perp}\psi^{\otimes^{3d}}(z)\d z>0$.
    
    For each $\gamma\in\big\{(\rho-\psi^{\otimes^{3d}})\otimes\psi^{\otimes^{3d}},(\psi^{\otimes^{3d}}-\rho)\otimes\psi^{\otimes^{3d}}\big\}$, we begin by proving that
    \begin{equation}\label{eq:a=c=1ReductionMainGoal2}
        \mathcal{L}^*\Big(\gamma\big(g(kP_x^Q),g(kP_y^Q),g(kP_z^Q),h(kP_x^Q),h(kP_y^Q),h(kP_z^Q)\big)\Big)= 0,
    \end{equation}
    
    \begin{proof}[Proof of \eqref{eq:a=c=1ReductionMainGoal2}]
    \renewcommand\qedsymbol{$\blacktriangle$}    
     Applying \cref{lem:multidimFejer} with $l=6d$, the convolutions $\gamma\ast F_r$, where $F_r$ are the Fejér kernels, are trigonometric polynomials in $(\S^1)^{6d}$ that approximate $\gamma$ in the uniform norm as $r\to\infty$. 
     To show \eqref{eq:a=c=1ReductionMainGoal2} it then suffices to prove
    \begin{equation}\label{eq:a=c=1ReductionMainGoal2.1}
        \forall r\in\N,\quad \mathcal{L}^*\Big(\big(\gamma\ast F_r\big)\big(g(kP_x^Q),g(kP_y^Q),g(kP_z^Q),h(kP_x^Q),h(kP_y^Q),h(kP_z^Q)\big)\Big)= 0.
    \end{equation}
    Note that $\widehat{\gamma\ast F_r}(s)=\hat \gamma(s)\hat{F_r}(s)=0$ whenever $s=(s_1,s_2,s_3,s_4,s_5,s_6)\in Z\times\Z^{3d}$.
    It follows that each convolution $\gamma\ast F_r$ is a finite linear combination of powers $z^s$ with $s\in\Z^{6d}\setminus (Z\times\Z^{3d})$, hence it suffices to show for each such $s=(s_1,s_2,s_3,s_4,s_5,s_6)$ that
    \begin{equation}\label{eq:a=c=1ReductionMainGoal2.2}
        \mathcal{L}^*\Big(\big(g^{s_1}\cdot h^{s_4}\big)\big(kP_x^Q)(g^{s_2}\cdot h^{s_5})(kP_y^Q)(g^{s_3}\cdot h^{s_6})(kP_z^Q)\big)\Big)=0.
    \end{equation}
    As $s\notin Z\times\Z^3$ we have three cases.

    Case 1: The product $g^{s_1+s_2+s_3}\cdot h^{s_4+s_5+s_6}$ is non-trivial. Then \eqref{eq:a=c=1ReductionMainGoal2.2} follows by \cref{lem:folnervanish}.
    
    Case 2: At least one of $g^{s_1}, g^{s_2}, g^{s_3}$ is aperiodic. 
    This implies that at least one of $g^{s_1}\cdot h^{s_4}, g^{s_2}\cdot h^{s_5}, g^{s_3}\cdot h^{s_6}$ is aperiodic and hence by \cref{cor:aperiodicJP} we conclude that \eqref{eq:a=c=1ReductionMainGoal2.2} holds.
    
    Case 3: Neither Case 1 or Case 2 hold and $s\notin Z\times\Z^3$. 
    In this case, all of $g^{s_1}$, $g^{s_2}$ and $g^{s_3}$ are Dirichlet character and $g^{s_1+s_2+s_3}\cdot h^{s_4+s_5+s_6}\equiv1$. 
    In particular, after factoring out the average in $k$, which equals $1$, one may disregard $k$ entirely.
    Since $h^{s_4+s_5+s_6}$ pretends to be an Archimedean character and $g^{s_1+s_2+s_3}$ is a Dirichlet character, this also implies that $g^{s_1+s_2+s_3}\equiv 1$. 
    The condition $s\notin Z\times\Z^3$ now implies that $g^{s_2}\not\equiv1$.
    
    If $K$ is large enough, then for any $Q\in\Phi_K$ and any $m,n\in\N$,
    $$g^{s_1}(P_x^Q)=g^{s_3}(P_z^Q)=1\text{ and }g^{s_2}(P_y^Q)=g^{s_2}(2\sqrt{a}Qn).$$
    In view of Corollaries \ref{cor:irrConcentration} and \ref{cor:redConcentration}, we have
    \begin{equation*}
        \begin{gathered}
            \mathcal{L}^*\Big|h^{s_4}(P_x^Q)- ((Qm)^2-b(Qn)^2)^{its_4}\cdot\exp(2s_4F(h;K,N))\Big| =0,\\
            \mathcal{L}^*\Big|h^{s_5}(P_y^Q)-h^{s_5}(2\sqrt{a}Qn)\cdot (Qm)^{its_5}\cdot\exp(s_5F(h;K,N))\Big| =0,\\
            \mathcal{L}^*\Big|h^{s_6}(P_z^Q)-((Qm)^2+b(Qn)^2)^{its_6}\cdot\exp(s_6G_{P_z}(h;K,N))\Big| =0.
        \end{gathered}
    \end{equation*}
    Using these we replace the left-hand side of \eqref{eq:a=c=1ReductionMainGoal2.2} with
    \begin{equation*}
        \begin{gathered}
        \mathcal{L}^*\Big(g^{s_2}(2\sqrt{a}Qn)\cdot((Qm)^2-b(Qn)^2)^{its_4}\cdot h^{s_5}(2\sqrt{a}Qn)\cdot (Qm)^{its_5}\cdot ((Qm)^2+b(Qn)^2)^{its_6}\cdot\zeta\Big)\\
        =\mathcal{L}^*\Big((g^{s_2}\cdot h^{s_5})(Q) Q^{i(2s_4+s_5+2s_6)}\cdot g^{s_2}(2\sqrt{a}n)\cdot(m^2-bn^2)^{its_4} h^{s_5}(2\sqrt{a}n)\cdot m^{its_5} (m^2+bn^2)^{its_6}\cdot\zeta\Big),
        \end{gathered}
    \end{equation*}
    where $\zeta=\exp\big(2s_4F(h;K,N)+s_5F(h;K,N)+s_6G_{P_z}(h;K,N)\big)$. 
    Since $g^{s_2}$ is a non-trivial Dirichlet character and the map $Q\mapsto h^{s_5}(Q)\cdot Q^{i(2s_4+s_5+2s_6)}$ pretends to be an Archimedean character, we must then have $g^{s_2}(Q)\cdot h^{s_5}(Q)\cdot Q^{i(2s_4+s_5+2s_6)}\not\equiv 1$. 
    Therefore the average vanishes due to \cref{lem:folnervanish}.
    \end{proof}
    
    Next, we show that
     \begin{equation}\label{eq:a=c=1ReductionMainGoal3}
        \begin{gathered}
        \mathcal{L}^* \Big(\Big(\rho\big(g(kP_x^Q),g(kP_y^Q),g(kP_z^Q)\big)-C\Big)\cdot\psi^{\otimes^{3d}}\big(h(kP_x^Q),h(kP_y^Q),h(kP_z^Q)\big)\Big)=0.
        \end{gathered}
    \end{equation}

    \begin{proof}[Proof of \eqref{eq:a=c=1ReductionMainGoal3}]
    \renewcommand\qedsymbol{$\blacktriangle$}
    Recall that $\widehat{\rho}$ is supported in $Z$. 
    Thus, through a similar approximation argument as before, it suffices to establish \eqref{eq:a=c=1ReductionMainGoal3} when $\rho$ is replaced by powers $z^s$ with $s\in Z$, namely
     \begin{equation}
        \begin{gathered}
        \mathcal{L}^* \Big(\Big(g^{s_1}(kP_x^Q)g^{s_2}(kP_y^Q)g^{s_3}(kP_z^Q)-1\Big) \cdot\psi^{\otimes^{3d}}\big(h(kP_x^Q),h(kP_y^Q),h(kP_z^Q)\big)\Big)=0.
        \end{gathered}
    \end{equation}
    In that case, $g^{s_2}\equiv1$ and for any $Q\in\Phi_K$, with large enough $K$, we have $g^{s_1}(P_x^Q)=g^{s_3}(P_z^Q)=1$. 
    
    \end{proof}

    To complete the proof, note that \eqref{eq:a=c=1ReductionMainGoal2} and \eqref{eq:a=c=1ReductionMainGoal3} together imply
    \begin{equation}\label{eq:a=c=1ReductionMainGoal4}
        \begin{gathered}
        \mathcal{L}\Big(\ \psi^{\otimes^{6d}}\big(g(kP_x^Q),g(kP_y^Q),g(kP_z^Q),h(kP_x^Q),h(kP_y^Q),h(kP_z^Q)\big)\Big)\\
        \geq C\cdot\mathcal{L}\Big(\ \psi^{\otimes^{3d}}\big(h(kP_x^Q),h(kP_y^Q),h(kP_z^Q)\big)\Big).
        \end{gathered}
    \end{equation}
    Since $C>0$, using \cref{prop:a=c=1PretentiousCase} this finishes the proof.
\end{proof}

\subsection{Case \texorpdfstring{$a+b=c$}{a+b=c}}

In this section we prove \cref{thm_jpforabc} in the case $a+b=c$, for arbitrary functions $f_1,\dots,f_d\in\M$.
The proof is similar to the case $a=c$ established in the previous section but with a few important differences. In particular, instead of considering an average over $Q$, it is more convenient to simply choose a sufficiently divisible $Q$, depending only on $f_1,\dots,f_d$, as described by the following lemma.
\begin{lemma}\label{lem:findingQ}
    Let $g=(g_1,\dots,g_d)\in{\mathcal M}^d$. 
    Then there exists $Q\in\N$ such that for every $s\in\Z^d$ for which the product $g^s$ is a Dirichlet character, $g^s(Qn+1)=1$ for all $n\in\N$.
\end{lemma}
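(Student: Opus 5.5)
The plan is to exploit the group structure of the exponent set. Let $S:=\{s\in\Z^d: g^s \text{ is a Dirichlet character}\}$. Since the product of two Dirichlet characters is again a Dirichlet character, and the inverse of a Dirichlet character is its conjugate, also a Dirichlet character, the set $S$ is a subgroup of $\Z^d$. The map $s\mapsto g^s$ is a homomorphism from $\Z^d$ to $\M$, so its restriction to $S$ is a homomorphism. Being a subgroup of $\Z^d$, the group $S$ is finitely generated; I will fix generators $s^{(1)},\dots,s^{(r)}$.

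For each generator, $\chi_i:=g^{s^{(i)}}$ is a (modified) Dirichlet character of some period $q_i$. I will take $Q:=\operatorname{lcm}(q_1,\dots,q_r)$, or simply $Q=q_1\cdots q_r$.

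Now let $s\in S$ be arbitrary and write $s=\sum_i c_i s^{(i)}$ with $c_i\in\Z$. Then $g^s=\prod_i \chi_i^{c_i}$. Fix $n\in\N$. Since $q_i\mid Q$, we have $Qn+1\equiv 1 \pmod{q_i}$, and in particular $\gcd(Qn+1,q_i)=1$. Every prime $p$ dividing $Qn+1$ therefore satisfies $p\nmid q_i$, so $\chi_i(p)=f_i(p\bmod q_i)$, where $f_i$ is the underlying character of $(\Z/q_i\Z)^\times$. By complete multiplicativity, $\chi_i(Qn+1)=f_i\big((Qn+1)\bmod q_i\big)=f_i(1)=1$. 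Hence $g^s(Qn+1)=\prod_i\chi_i(Qn+1)^{c_i}=1$.

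The only point that needs care is the modified convention, under which $\chi(p)=1$ for $p\mid q$. This is harmless here: because $Qn+1$ is coprime to each $q_i$, the primes dividing the period never appear in its factorisation. The other point is that $Q$ must be uniform in $s$, and finite generation of $S$ is exactly what makes this possible. I do not expect any real obstacle.
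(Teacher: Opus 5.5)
Your proposal is correct and follows the paper's proof essentially verbatim: the set $S$ of exponents giving Dirichlet characters, finite generation, and $Q$ the lcm of the periods of the generators; you also check $\chi_i(Qn+1)=1$ under the modified convention more carefully than the paper does. One small inaccuracy, which the paper shares: under the convention $\chi(p)=1$ for $p\mid q$, a product of Dirichlet characters need not be one (e.g.\ the product of the nontrivial characters of periods $3$ and $4$ takes the value $-1$ at the prime $2$, yet any period for it would have to be a multiple of its conductor $12$), so $S$ need not be a subgroup. This is harmless, because you can choose finitely many $s^{(1)},\dots,s^{(r)}\in S$ generating the subgroup $\langle S\rangle\subset\Z^d$, and your computation then applies verbatim to every $s\in S\subset\langle S\rangle$.
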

\begin{proof}
    Let $\mathcal{S}:=\{s\in\Z^d: g^s\text{ is a Dirichlet character}\}$. Note that $\mathcal{S}$ is a subgroup of $\Z^d$, and thus it is finitely generated, say $\mathcal{S}=\langle{s_1,\dots,s_\ell}\rangle$. For each $i\in\{1,\dots, \ell\}$, the function $g^{s_i}$ is a Dirichlet character, thus there exists $q_i\in\N$ such that $g^{s_i}(q_in+1)=1$ for every $n\in\N$. 
    Taking $Q:=\text{lcm}(q_1,\dots,q_\ell)$, it follows that $g^s(Qn+1)=1$ for every $n\in\N$ when $s\in \mathcal{S}=\langle{s_1,\dots,s_\ell}\rangle$.
\end{proof}

Here is the final result needed to establish \cref{thm_jpforabc}.
The main difference in the proof, relatively to the proof of \cref{thm:general_a=c} above is that here we do not need to use a concentration estimate directly. This allows us to rely on \cref{lem:findingQ} to find a suitable $Q$ from the beginning of the proof, considerably simplifying the argument.

\begin{theorem}\label{thm:general_a+b=c}
    Let $g=(g_1,\dots,g_d),h=(h_1,\dots,h_d)\in\M^d$ be such that each $h_j(n)\sim n^{it_j}$ and for every $s=(s_1,\dots,s_d)\in\Z^d$, $g^s$ is either aperiodic or a Dirichlet character. Let also $a,b\in\N$ and $P_x,P_y,P_z$ be given by \eqref{eq:a+b=c_param}, namely
    \begin{equation*}
        \begin{gathered}
        P_x=P_x(m,n)=m^2-2bmn-abn^2,\qquad\qquad\qquad\qquad    P_y=P_y(m,n)=m^2+2amn-abn^2,\\
        P_z=P_z(m,n)=m^2+abn^2.
        \end{gathered}
    \end{equation*}
    Then $g_1,\dots,g_d,h_1,\dots,h_d$ are JP w.r.t. $a,b,a+b$.
\end{theorem}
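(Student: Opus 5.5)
Fix $Q$ once and for all using \cref{lem:findingQ} applied to $g=(g_1,\dots,g_d)$, enlarged if necessary to be divisible by $\prod_{p\le K_0}p$ for a suitably large $K_0$. Then $g^s(Qn+1)=1$ for all $n$ whenever $g^s$ is a Dirichlet character. By the same reasoning as in \cref{lem:averageInQReduction}, it is enough to show that for this single $Q$
$$\limsup_{N\to\infty}\Eh\Es\prod_{j=1}^d\psi^{\otimes 3}\big(g_j(kP_w^Q)\big)_{w}\,\psi^{\otimes 3}\big(h_j(kP_w^Q)\big)_{w}>0,$$
since the grid $\{(Qm+1,Qn)\}$ has positive density in $\Delta_N$. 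Every value $P_w^Q(m,n)$ with $w\in\{x,y,z\}$ is $\equiv 1 \bmod Q$, because $P_w(1,0)=1$. Hence $g^{s}(P_w^Q)=1$ whenever $g^s$ is a Dirichlet character, for each of the three polynomials. This removes the $Q$-averaging and the concentration step for the $g$'s that was needed in the case $a=c$, where $P_y=2\sqrt a mn$ did not reduce to $1$.

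As before, let
$$Z:=\{(s_1,s_2,s_3)\in\Z^{3d}: g^{s_1},g^{s_2},g^{s_3}\text{ are Dirichlet characters and } g^{s_1+s_2+s_3}\equiv1\},$$
and let $\rho$ be the projection of $\psi^{\otimes3d}$ onto the frequencies in $Z$. Then $C:=\rho(1,\dots,1)>0$. The first step is to show that replacing $\psi^{\otimes3d}(g(kP^Q_\cdot))$ by $\rho(g(kP^Q_\cdot))$ costs nothing in the limit. Using \cref{lem:multidimFejer} on $(\S^1)^{6d}$, it suffices to show the vanishing of
$$\lim_N\Eh\Es (g^{s_1}h^{s_4})(kP_x^Q)(g^{s_2}h^{s_5})(kP_y^Q)(g^{s_3}h^{s_6})(kP_z^Q)$$
for every $s\notin Z\times\Z^{3d}$. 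Two cases arise:
\begin{itemize}
\item If $g^{s_1+s_2+s_3}h^{s_4+s_5+s_6}\not\equiv1$, the $k$-average vanishes by \cref{lem:folnervanish}.
\item Otherwise, some $g^{s_i}$ must be aperiodic. If all were Dirichlet characters, the product would have to be trivial: a Dirichlet character times a function pretending to be $n^{it}$ equals $1$ only when the character itself is $1$, which would put $s$ in $Z$. So some $g^{s_i}h^{s_{i+3}}$ is aperiodic, and \cref{cor:aperiodicJP} gives vanishing.
\end{itemize}
This is simpler than Case 3 in the proof of \cref{thm:general_a=c}, because no Dirichlet character survives on $P_y$.

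The second step is to treat the frequencies in $Z$. For $s\in Z$ we have $g^{s_1}(kP_x^Q)g^{s_2}(kP_y^Q)g^{s_3}(kP_z^Q)=g^{s_1+s_2+s_3}(k)\cdot1=1$. Hence $\rho(g(kP^Q_\cdot))=C$ identically, and the whole expression is bounded below by
$$C\cdot\limsup_N\Eh\Es\psi^{\otimes3d}\big(h(kP_x^Q),h(kP_y^Q),h(kP_z^Q)\big).$$
Here a little care with the limsup/liminf structure is needed. I would handle it by passing to a subsequence of $N$ along which the $h$-average achieves its limsup. Along that subsequence the $\gamma$-terms from the first step still tend to zero, because those limits exist.

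The main obstacle is the final positivity: we need this $h$-average restricted to the fixed grid $Q$ to be positive, not merely positive after averaging over $Q\in\Phi_K$. I would go back into the proof of \cref{prop:a+b=cPretentiousCase}, which, via \cref{lem:ABClemma}, actually shows that $\limsup_K\limsup_N\E_{Q\in\Phi_K}\Eh(\cdots)>0$ for the $A,B,C$ quantities of the $h_j$. That average includes a positive proportion of $Q$'s divisible by our fixed $Q_0$, since every element of $\Phi_K$ is eventually a multiple of $Q_0$.

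So I would choose the final $Q$ as a multiple of the \cref{lem:findingQ} modulus that also works for the $h$'s. A multiple $Q'$ of $Q_0$ still satisfies $g^s(Q'n+1)=1$. Concretely:
\begin{itemize}
\item \cref{prop:a+b=cPretentiousCase} supplies some $Q'\in\Phi_K$, with $K$ large, satisfying the hypothesis of \cref{lem:ABClemma} for $h$.
\item This $Q'$ is automatically a multiple of $Q_0$.
\item The argument of \cref{lem:ABClemma} then converts the positivity of the $A,B,C$ averages into positivity of the $k$-averaged expression on that grid.
\end{itemize}
Combined with the factor $C>0$, this finishes the proof.
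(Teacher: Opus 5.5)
Your proposal is correct and follows essentially the same route as the paper: the modulus from \cref{lem:findingQ}, the same group $Z$ and projection $\rho$, the same two-case Fej\'er argument via \cref{lem:folnervanish} and \cref{cor:aperiodicJP}, and the reduction to $C$ times the $h$-average. Your final step is a useful addition: you take $Q\in\Phi_K$ with $K$ large from the proof of \cref{prop:a+b=cPretentiousCase}, so it is automatically a multiple of $Q_0$, and then run the argument of \cref{lem:ABClemma} on that grid, which makes explicit a point the paper glosses over when it cites \cref{prop:a+b=cPretentiousCase} for a grid-restricted average.
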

\begin{proof}
    Let $Q_0\in\N$ be given by \cref{lem:findingQ}, and let $\mathcal{L}$ and $\mathcal{L}^*$ be short for
    $$\limsup_{N\to\infty}\Eh\Es,\quad\text{ and }\quad \lim_{N\to\infty}\Eh\Es.$$ 
    We aim to show that, for every multiple $Q$ of $Q_0$,
    \begin{equation*}
        \mathcal{L} \Bigg(\prod_{j=1}^d\psi\big(g_j(kP_x^Q)\big)\psi\big(g_j(kP_y^Q)\big)\psi\big(g_j(kP_z^Q)\big)\psi\big(h_j(kP_x^Q)\big)\psi\big(h_j(kP_y^Q)\big)\psi\big(h_j(kP_z^Q)\big)\Bigg)>0.
    \end{equation*}
    We make use of a subgroup $Z$ of $(\Z^d)^3$ and a function $\rho:(\S^1)^{3d}\to[0,1]$ defined as
    $$Z:=\big\{s=(s_1,s_2,s_3)\in\Z^{3d}:g^{s_1},g^{s_2},g^{s_3}\text{ are Dirichlet characters and }g^{s_1+s_2+s_3}\equiv 1\big\},$$
    and
    $$\rho(z)=\int_{Z^\perp}(\psi\otimes\cdots\otimes\psi)(zz')\d z',$$
    where $Z^\perp= \{z\in (\S^1)^{3d}:\forall s\in Z\ z^s=1\}$ is the dual of $Z$ in $(\S^1)^{3d}$. 
    Then
    $$\widehat{\rho}(s)=
    \begin{cases}
        \widehat{\psi^{\otimes^{3d}}},&\text{if }s\in Z\\
        0,&\text{otherwise}.
    \end{cases}$$

    We first prove that
    \begin{equation}\label{eq:a+b=cReductionMainGoal2}
        \mathcal{L}^* \Big(\gamma\big(g(kP_x^Q),g(kP_y^Q),g(kP_z^Q),h(kP_x^Q),h(kP_y^Q),h(kP_z^Q)\big)\Big)= 0,
    \end{equation}
    where $\gamma:=\big(\rho-\psi^{\otimes^{3d}}\big)\otimes\psi^{\otimes^{3d}}$.
    
    \begin{proof}[Proof of \eqref{eq:a+b=cReductionMainGoal2}]
    \renewcommand\qedsymbol{$\blacktriangle$}
    Arguing as in the proof of \eqref{eq:a=c=1ReductionMainGoal2}, it suffices to show that for each $s\in\Z^{6d}\setminus{(Z\times\Z^{3d})}$ we have
    $$\mathcal{L}^*\Big(\big(g^{s_1}\cdot h^{s_4}\big)\big(kP_x^Q)(g^{s_2}\cdot h^{s_5})(kP_y^Q)(g^{s_3}\cdot h^{s_6})(kP_z^Q)\big)\Big)=0.$$
    
    Case 1: If at least one of $g^{s_1}, g^{s_2}, g^{s_3}$ is aperiodic, we use \cref{cor:aperiodicJP} to conclude that \eqref{eq:a+b=cReductionMainGoal2} holds.

    Case 2: All of $g^{s_1},g^{s_2},g^{s_3}$ are Dirichlet characters and $g^{s_1+s_2+s_3}\not\equiv1$. Since $h^{s_4+s_5+s_6}$ pretends to be an Archimedean character, we must have $g^{s_1+s_2+s_3}\cdot h^{s_4+s_5+s_6}\not\equiv 1$. The result then follows by \cref{lem:folnervanish}.
    \end{proof}
    
    Let $C:=\rho(1,\dots,1)=\int_{Z^\perp}\psi^{\otimes^{3d}}(z)\d z$.
    Since $\psi\geq0$, $\psi(1)>0$ and $(1,\dots,1)\in Z^\perp$, it follows that $C>0$. 
    We next show that
    \begin{equation}\label{eq:a+b=cReductionMainGoal3.1}
        \begin{gathered}
        \mathcal{L}^* \Big(\Big(\rho\big(g(kP_x^Q),g(kP_y^Q),g(kP_z^Q)\big)-C\Big)\cdot\psi^{\otimes^{3d}}\big(h(kP_x^Q),h(kP_y^Q),h(kP_z^Q)\big)\Big)= 0.
        \end{gathered}
    \end{equation}

    \begin{proof}[Proof of \eqref{eq:a+b=cReductionMainGoal3.1}]
    \renewcommand\qedsymbol{$\blacktriangle$}
    Recall that $\rho$ only has frequencies in $Z$. 
    Again, by \cref{lem:multidimFejer}, it suffices to show \eqref{eq:a+b=cReductionMainGoal3.1} when $\rho$ is replaced by powers $z^s$ with $s\in Z$, that is
    $$\mathcal{L}^* \Big(\Big(g^{s_1}(kP_x^Q)g^{s_2}(kP_y^Q)g^{s_3}(kP_z^Q)-1\Big)\cdot\psi^{\otimes^{3d}}\big(h(kP_x^Q),h(kP_y^Q),h(kP_z^Q)\big)\Big)=0.$$
    In that case, $g^{s_1+s_2+s_3}(k)=1$. Moreover, since $P_x^Q\equiv P_y^Q\equiv P_z^Q\equiv 1\bmod{Q}$ and $Q$ is a multiple of $Q_0$ given by \cref{lem:findingQ}, we have $g^{s_1}(P_x^Q)=g^{s_2}(P_y^Q)=g^{s_3}(P_z^Q)=1$.
    \end{proof}
    
    Note that \eqref{eq:a+b=cReductionMainGoal2} and \eqref{eq:a+b=cReductionMainGoal3.1} together imply
    \begin{align*}
        \begin{gathered}
        \mathcal{L}\Big( \psi^{\otimes^{6d}}\big(g(kP_x^Q),g(kP_y^Q),g(kP_z^Q),h(kP_x^Q),h(kP_y^Q),h(kP_z^Q)\big)\Big)\\
        =C\cdot \mathcal{L}\Big( \psi^{\otimes^{3d}}\big(h(kP_x^Q),h(kP_y^Q),h(kP_z^Q)\big)\Big).
        \end{gathered}
    \end{align*}
    Since $C> 0$, the conclusion now follows from \cref{prop:a+b=cPretentiousCase}.
\end{proof}

\end{document}